\newcommand{\myendbibitem}{\relax}
\newtheorem{thm}[equation]{Theorem}
\newtheorem{prop}[equation]{Proposition}
\newtheorem{lem}[equation]{Lemma}
\newtheorem{cor}[equation]{Corollary}
\theoremstyle{definition}
\newtheorem{defn}[equation]{Definition}
\newtheorem{remark}[equation]{Remark}
\newtheorem{example}[equation]{Example}
\numberwithin{equation}{section}
\newcommand{\calI}{{\mathcal I}}
\newcommand{\calZ}{{\mathcal Z}}
\newcommand{\id}{\operatorname{id}}
\newcommand{\inv}{^{-1}}
\newcommand{\lra}{\longrightarrow}
\newcommand{\RMaps}{\operatorname{RMaps}}
\newcommand{\RMP}[1]{\RMaps_\PGLn(#1,\Mn)}
\newcommand{\Spec}{\operatorname{Spec}}
\newcommand{\Stab}{\operatorname{Stab}}
\newcommand{\tr}{\operatorname{tr}}
\newcommand{\Cmn}{C_{m,n}}
\newcommand{\Gmn}{G_{m,n}}
\newcommand{\Qmn}{Q_{m,n}}
\newcommand{\Tmn}{T_{m,n}}
\newcommand{\Umn}{U_{m,n}}
\newcommand{\M}{\operatorname{M}}
\newcommand{\Mn}{\M_n}
\newcommand{\Mnl}{(\Mn)^l}
\newcommand{\Mnm}{(\Mn)^m}
\newcommand{\GL}{{\operatorname{GL}}}
\newcommand{\PGLn}{{\operatorname{PGL}_n}}
\newcommand{\calC}{{\mathcal C}}
\newcommand{\calD}{{\mathcal D}}
\newcommand{\calE}{{\mathcal E}}
\newcommand{\Var}{\textit{Var}}
\newcommand{\PI}{\textit{PI}}
\newcommand{\Bir}{\textit{Bir}}
\newcommand{\CS}{\textit{CS}}
\newcommand{\knX}{k_n[X]}
\newcommand{\knY}{k_n[Y]}
\newcommand{\ad}{\operatorname{ad}}
\newcommand{\gl}{\operatorname{gl}}
\newcommand{\pgl}{\operatorname{pgl}}
\newcommand{\Lie}{\operatorname{Lie}}
\newcommand{\Max}{\operatorname{Max}}
\newcommand{\R}{\operatorname{R}}
\begin{document}

\setlength{\leftmargini}{2em}  

\title{Polynomial identity rings as rings of functions, II}

\date{{\bf Revision of November 12, 2011}}

\author{Nikolaus Vonessen}
\address{Department of Mathematical sciences, The University of Montana,
  Missoula, MT 59812-0864, USA}
\email{\href{mailto://nikolaus.vonessen@umontana.edu}{nikolaus.vonessen@umontana.edu}}
\urladdr{\href{http://www.math.umt.edu/vonessen}{http://www.math.umt.edu/vonessen}}

\begin{abstract}
  In characteristic zero, Zinovy Reichstein and the author generalized
  the usual relationship between irreducible Zariski closed subsets of
  the affine space, their defining ideals, coordinate rings, and
  function fields, to a non-commutative setting, where ``varieties"
  carry a $\PGLn$-action, regular and rational ``functions" on them
  are matrix-valued, ``coordinate rings" are prime polynomial identity
  algebras, and ``function fields" are central simple algebras of
  degree~$n$.  In the present paper, much of this is extended to prime
  characteristic.  In addition, a mistake in the earlier paper is
  corrected.  One of the results is that the finitely generated prime
  PI-algebras of degree $n$ are precisely the rings that arise as
  ``coordinate rings" of ``$n$-varieties'' in this setting.  For $n =
  1$ the definitions and results reduce to those of classical affine
  algebraic geometry.
\end{abstract}

\subjclass[2010]{Primary: 16R30, 16R20; Secondary 14L30, 14A10}

\keywords{Polynomial identity ring, central simple algebra,
trace ring, coordinate ring, the Nullstellensatz, $n$-variety,
$\PGLn$-variety} 

\maketitle

\section{Introduction}
\label{section:intro}

In characteristic zero, the usual relationship between irreducible
Zariski closed subsets of affine space, their defining ideals,
coordinate rings, and function fields was extended in \cite{nvar} to
the setting of PI-algebras.  The geometric objects are the {\it
  $n$-varieties}, certain $\PGLn$-invariant locally closed subsets of
$\Mnm$.  Here $\Mnm$ denotes the $m$-tuples of $n\times n$ matrices
over the algebraically closed base field $k$; $\PGLn$ acts on $\Mnm$
by simultaneous conjugation.  An irreducible $n$-variety $X$ has a
{\it PI-coordinate ring} $\knX$, which is a prime, finitely generated
PI-algebra over~$k$.  Moreover, up to isomorphism, every finitely
generated prime PI-algebra over $k$ arises in this way.  The total
ring of fractions of $\knX$ is a central simple algebra of degree~$n$,
called the {\it central simple algebra of rational functions on $X$}
and denoted by $k_n(X)$.

In~\cite{nvar}, we restricted attention to characteristic zero since
we were primarily interested in the ``rational'' theory (in
particular, in Theorem~1.2), where several proofs use the
characteristic zero assumption in an essential way.  But it is a
natural question to ask to what extent the results described above are
true in prime characteristic; in fact, in his MathSciNet
review~\cite{vaccarino}, Francesco Vaccarino termed it ``unfortunate''
that the positive characteristic case was not included.  The main
purpose of the present paper is to extend many of the results
in~\cite{nvar} to prime characteristic, some fully, others only
partially.  In particular, all results in Sections 3---5 extend fully
to prime characteristic (see Section~\ref{section:rem-to-sec-3-5}
below for more details and one minor exception).  The main results
of~\cite{nvar}, Theorems~1.1 and~1.2, require more discussion.  In
addition, we fix below an error affecting Theorem~1.1.  

Before continuing, we make several conventions.  This article should
be considered the second part of~\cite{nvar}, whose conventions and
notation we follow with one exception: the algebraically closed base
field $k$ is of arbitrary characteristic.  In this spirit, the numbers
of all results (and sections, etc.)  in the present paper are preceded
by ``II.'', while numbers without this prefix refer to items
in~\cite{nvar}.  Similarly, [1]---[30] refer to the references
of~\cite{nvar}, while the references of the present paper are numbered
[31] and higher.  These conventions should make it easier to
simultaneously read proofs in~\cite{nvar} together with the relevant
commentary in the present paper.

\begin{table}\small
\newcommand{\objparbox}[1]{\parbox{1.3in}{\raggedright
   \rule[2.5ex]{0mm}{0mm}#1\rule[-1.2ex]{0mm}{0mm}}}
\newcommand{\morparbox}[1]{\parbox{1.8in}{\raggedright 
   \rule[2.5ex]{0mm}{0mm}#1\rule[-1.2ex]{0mm}{0mm}}}
\newcommand{\thmparbox}[1]{\parbox{.7in}{\centering
   \rule[2.5ex]{0mm}{0mm}#1\rule[-1.2ex]{0mm}{0mm}}}
\begin{tabular}{|c|l|l|c|}\hline
\em Category & \em Objects & \em Morphisms
 & \multicolumn{1}{|l|}{\thmparbox{\em Relevant Theorems}}\\ \hline
$\PI_n$ & \objparbox{finitely generated\\ prime $k$-algebras of PI-degree~$n$}
  & \morparbox{$k$-algebra
  homomorphisms $\alpha\colon R\to S$ such that for any $k$-algebra
  surjection $\phi\colon S\to\Mn$, $\phi\circ\alpha$ is also
  surjective}
  & \ref{thm1prime}\\ \hline
$\CS_n$
  & \objparbox{central simple algebras of degree $n$ whose center is a
    finitely generated field extension of $k$}
  & \morparbox{$k$-algebra homomorphisms (necessarily injective)}
  & \parbox{.7in}{\centering \ref{thm:B} \\
    \ref{thm:Dnew} \\ 1.2 (char $0$)}
    \\ \hline
\end{tabular}

\medskip
\caption{\label{table:alg-cat}\em The algebraic categories}
\end{table}

Our main results involve several algebraic and geometric categories,
see Tables~\ref{table:alg-cat} and~\ref{table:geom-cat}.  The
morphisms in the category $\PI_n$ were incorrectly defined
in~\cite{nvar}; an alternate characterization of these morphisms is
given in Remark~\ref{rem:alpha-in-PIn}.

\begin{table}\small
\newcommand{\objparbox}[1]{\parbox{1.75in}{\raggedright
   \rule[2.5ex]{0mm}{0mm}#1\rule[-1.2ex]{0mm}{0mm}}}
\newcommand{\morparbox}[1]{\parbox{1.55in}{\raggedright 
   \rule[2.5ex]{0mm}{0mm}#1\rule[-1.2ex]{0mm}{0mm}}}
\newcommand{\thmparbox}[1]{\parbox{.55in}{\centering
   \rule[2.5ex]{0mm}{0mm}#1\rule[-1.2ex]{0mm}{0mm}}}
\begin{tabular}{|c|l|l|c|}\hline
\em Category & \em Objects & \em Morphisms 
 & \multicolumn{1}{|l|}{\thmparbox{\raggedright \em\small Relevant Theorem}}\\ \hline
$\Var_n$ & 
  & \morparbox{regular maps of $n$-varieties (Def.~6.1(a))} 
  & \ref{thm1prime}\\ \cline{1-1}\cdashline{2-2}\cline{3-4} 
$\calC_n$
  & \smash{$\left.\rule[-8ex]{0ex}{0ex}\right\}$}
    irreducible $n$-varieties
  & \morparbox{dominant rational maps of $n$-varieties
    (Def.~\ref{defn:7.5new})}
  & \ref{thm:B}\\ \cline{1-1}\cdashline{2-2}\cline{3-4} 
$\calD_n$
  & 
  & \morparbox{\ \\ \ }
    
  & \\ \cline{1-2}\cdashline{3-3}\cline{4-4} 
$\calE_n$
  & \objparbox{irreducible $\PGLn$-varieties that are
     $\PGLn$-equivariantly birationally isomorphic to an $n$-variety} 
  & \smash{$\left.\rule[-11ex]{0ex}{0ex}\right\}$}
    
    \parbox{1.2in}{\raggedright$\PGLn$-equivariant dominant rational maps}
  & \ref{thm:Dnew}\\ \cline{1-2}\cdashline{3-3}\cline{4-4} 
$\Bir_n$ 
  & \objparbox{irreducible generically free $\PGLn$-varieties} 
  & 
  & \thmparbox{1.2 \hbox{(char $0$)}}\\ \hline
\end{tabular}

\medskip
\caption{\label{table:geom-cat}\em The geometric categories}
\end{table}

Each of the geometric categories, except for $\Var_n$, is a
subcategory of the one listed below it.  That $\calE_n$ is a
subcategory of $\Bir_n$ follows from Lemma~\ref{lem:n-var:gen-free}.
Note that the inclusion of $\calD_n$ into $\calE_n$ is a category
equivalence.  In characteristic zero, $\calC_n=\calD_n$ and
$\calE_n=\Bir_n$; this follows from Proposition~7.3 and Lemma~8.1,
which we will discuss below.  It is not known if these equalities also
hold in prime characteristic (cf.\ the open questions listed at the
end of the introduction).

\begin{thm} \label{thm1prime}
  The functor defined by
  \[ \begin{array}{rcl}  X &\mapsto &k_n[X] \\
     (f \colon X \lra Y) &\mapsto &(f^* \colon k_n[Y] \lra k_n[X])
     \end{array} \]%
  is a contravariant equivalence of categories between $\Var_n$ and
  $\PI_n$. 
\end{thm}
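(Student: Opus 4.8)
The plan is to show the displayed assignment is a contravariant functor that is faithful, full, and essentially surjective --- which is exactly what it means for it to be a contravariant equivalence of categories. The engine is the dictionary of Sections~3--5 (valid over an algebraically closed field of any characteristic; see Section~\ref{section:rem-to-sec-3-5}): for an $n$-variety $X\subseteq\Mnm$ --- so a $\PGLn$-invariant locally closed set all of whose $m$-tuples generate $\Mn$, equivalently have trivial $\PGLn$-stabilizer --- the evaluation $\mathrm{ev}_x\colon\knX\to\Mn$ at a point $x\in X$ (sending an equivariant regular map $h\colon X\to\Mn$ to $h(x)$) is a surjective $k$-algebra homomorphism; every $k$-algebra surjection $\knX\to\Mn$ equals $\mathrm{ev}_x$ for a unique $x\in X$; and $\mathrm{ev}_{gx}=g\,\mathrm{ev}_x\,g\inv$ for $g\in\PGLn$.

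\emph{Contravariant functoriality and well-definedness.} For a regular map $f\colon X\to Y$ of $n$-varieties put $f^*(h):=h\circ f$ for $h\in\knY$; since $h\circ f$ is again an equivariant regular map $X\to\Mn$, this gives a $k$-algebra homomorphism $f^*\colon\knY\to\knX$, with $\id^*=\id$ and $(g\circ f)^*=f^*\circ g^*$ holding formally. It remains to see that $f^*$ is a morphism in $\PI_n$, i.e.\ surjectivity-preserving: a surjection $\phi\colon\knX\to\Mn$ is $\mathrm{ev}_x$ for some $x\in X$, and then $\phi\circ f^*=\mathrm{ev}_{f(x)}$, which is a surjection because $f(x)\in Y$. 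So we have a contravariant functor $\Var_n\to\PI_n$.

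\emph{Faithfulness and fullness.} If $f^*=g^*$ for regular maps $f,g\colon X\to Y$, then $\mathrm{ev}_{f(x)}=\mathrm{ev}_x\circ f^*=\mathrm{ev}_x\circ g^*=\mathrm{ev}_{g(x)}$ for every $x\in X$, so $f(x)=g(x)$ by uniqueness in the dictionary, whence $f=g$. For fullness, let $\alpha\colon\knY\to\knX$ be a morphism in $\PI_n$. For each $x\in X$ the composite $\mathrm{ev}_x\circ\alpha\colon\knY\to\Mn$ is a surjection --- here the corrected definition of $\PI_n$-morphisms is precisely what is used, since $\mathrm{ev}_x$ is onto and $\alpha$ is surjectivity-preserving --- so $\mathrm{ev}_x\circ\alpha=\mathrm{ev}_y$ for a unique $y\in Y$; set $f(x):=y$. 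Writing $Y\subseteq\Mnl$ and $\overline X_1,\dots,\overline X_l\in\knY$ for the coordinate generic matrices, evaluation on $\overline X_j$ gives $f(x)=(\alpha(\overline X_1)(x),\dots,\alpha(\overline X_l)(x))$, so $f$ is a regular map $X\to\Mnl$ with image in $Y$; it is $\PGLn$-equivariant, since $\mathrm{ev}_{f(gx)}=\mathrm{ev}_{gx}\circ\alpha=g(\mathrm{ev}_x\circ\alpha)g\inv=g\,\mathrm{ev}_{f(x)}\,g\inv=\mathrm{ev}_{gf(x)}$. Thus $f$ is a morphism in $\Var_n$, and for $h\in\knY$, $x\in X$ we get $(h\circ f)(x)=\mathrm{ev}_{f(x)}(h)=(\mathrm{ev}_x\circ\alpha)(h)=\alpha(h)(x)$, so $f^*(h)=\alpha(h)$; hence $f^*=\alpha$.

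\emph{Essential surjectivity, and the main obstacle.} Given $R\in\PI_n$, pick $k$-algebra generators $r_1,\dots,r_m$. As $R$ is prime of PI-degree $n$ it embeds into $\Mn$ over a field, hence satisfies all identities of $n\times n$ matrices, so $\overline X_i\mapsto r_i$ defines a surjection $\Gmn\twoheadrightarrow R$ from the algebra of $m$ generic $n\times n$ matrices. As in the proof of Theorem~1.1 of \cite{nvar}, the subset $X\subseteq\Mnm$ of $m$-tuples $a$ that generate $\Mn$ and for which the evaluation $\Gmn\to\Mn$ at $a$ factors through $R$ is $\PGLn$-invariant and locally closed, irreducible because $R$ is prime, and satisfies $\knX\cong R$; that argument survives in prime characteristic once the results of Sections~3--5 do. This last point is where the real work of the paper lies: everything above is governed by the point/surjection dictionary, and re-establishing it without the characteristic-zero arguments of \cite{nvar} is the substantive task (carried out elsewhere here). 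The other delicate point --- and the one being fixed --- is the definition of the morphisms of $\PI_n$: with the definition from \cite{nvar} the functor is not full, because a $k$-algebra homomorphism $\knY\to\knX$ that is not surjectivity-preserving sends some $\mathrm{ev}_x$ to a non-surjection, which is not $\mathrm{ev}_y$ for any $y\in Y$ and hence is induced by no regular map $X\to Y$; restricting to surjectivity-preserving homomorphisms removes these while, as shown above, retaining every $f^*$.
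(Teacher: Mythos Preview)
Your proof is correct and follows essentially the same route as the paper: the paper packages the argument into Remark~\ref{rem.**} (giving $(f^*)_*=f$ and $(\alpha_*)^*=\alpha$, hence full and faithful), Lemma~\ref{lem:6.3new} (functoriality), and Theorem~6.4 (essential surjectivity), while you unpack these inline via the evaluation dictionary $x\leftrightarrow\mathrm{ev}_x$. Your construction of $f$ from $\alpha$ by setting $\mathrm{ev}_x\circ\alpha=\mathrm{ev}_{f(x)}$ and then reading off the coordinates $f(x)=(\alpha(\overline X_1)(x),\dots,\alpha(\overline X_l)(x))$ is exactly the paper's $\alpha_\#$/$\alpha_*$ construction, approached from the opposite end; note only that the essential surjectivity step you cite is Theorem~6.4 of \cite{nvar} rather than the proof of Theorem~1.1 itself.
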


Here, as in the following theorems, $f^*(g)=g\circ f$ (as maps
$X\to\Mn$).

Theorem~\ref{thm1prime} has the same statement as Theorem~1.1; but the
latter is false without the correct definition of the morphisms in the
category $\PI_n$ given in Table~\ref{table:alg-cat}.  Beyond
Theorem~1.1, the only results in~\cite{nvar} that need to be corrected
are Remark~6.2 and Lemma~6.3, see Section~\ref{section:rem-sec-6}.

In characteristic zero, every irreducible generically free
$\PGLn$-variety is birationally isomorphic (as a $\PGLn$-variety) to
an irreducible $n$-variety (Lem\-ma~8.1).  Using this, we established
a category equivalence from the category~$\Bir_n$ to the
category~$\CS_n$ (Theorem~1.2).  To prove this result in prime
characteristic would require answering several open questions in the
affirmative (see below).  Restricting attention to certain
subcategories of $\Bir_n$ we are able to obtain two results in the
spirit of Theorem~1.2, the first of which is proved in
Section~\ref{section:first-extension-S7}.

\begin{thm}\label{thm:B}
  The functor defined by
  \[ \begin{array}{rcl}      X &\mapsto &k_n(X) \\
    (f \colon X \dasharrow Y) &\mapsto &(f^* \colon k_n(Y)
    \hookrightarrow k_n(X)) \end{array} \]%
  is a contravariant category equivalence from the category~$\calC_n$
  to the category~$\CS_n$. 
\end{thm}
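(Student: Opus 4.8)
The plan is to show that the functor is well defined, fully faithful, and essentially surjective, closely mirroring the classical contravariant equivalence between irreducible varieties with dominant rational maps and finitely generated field extensions of~$k$. On objects this is immediate: for an irreducible $n$-variety $X$, the ring $\knX$ is a finitely generated prime PI-algebra of PI-degree~$n$ and $k_n(X)$ is its total ring of fractions, a central simple algebra of degree~$n$; since $Z(\knX)$ is an affine $k$-algebra, $Z(k_n(X))=\operatorname{Frac}(Z(\knX))$ is a finitely generated field extension of $k$, so $k_n(X)\in\CS_n$.

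To see that the functor is well defined and contravariant on morphisms, represent a dominant rational map $f\colon X\dasharrow Y$ by a regular map $g\colon U\to Y$ on a dense open $n$-subvariety $U\subseteq X$ (Definition~\ref{defn:7.5new}). Then $g^*\colon\knY\to k_n[U]$ is a morphism in $\PI_n$, and composing with the inclusion $k_n[U]\hookrightarrow k_n(U)=k_n(X)$ gives a $k$-algebra map which is injective and sends nonzero central elements of $\knY$ to units of $k_n(X)$ (because $f$ is dominant); it therefore extends uniquely to an embedding $f^*\colon k_n(Y)\hookrightarrow k_n(X)$. Independence of the chosen model $U$ and the relation $(g\circ f)^*=f^*\circ g^*$ are routine. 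Faithfulness is then easy: if $f^*=(f')^*$, choose a common dense open model $U$ for both; the two restrictions $\knY\to k_n[U]$ agree, so $f|_U=f'|_U$ by the faithfulness in Theorem~\ref{thm1prime}, and hence $f=f'$. Essential surjectivity is also direct: given $A\in\CS_n$ with center $Z$, let $R\subseteq A$ be the $k$-subalgebra generated by finitely many generators of $A$ as a $Z$-algebra together with finitely many field generators of $Z$ over $k$; then $R$ is affine with $Z(R)\subseteq Z$, $\operatorname{Frac}(Z(R))=Z$ and $R\cdot Z=A$, so the central localization of $R$ equals $A$. Hence $R$ is prime of PI-degree~$n$, so $R\cong\knX$ for some irreducible $n$-variety $X$ by Theorem~\ref{thm1prime}, and then $k_n(X)\cong A$.

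The substantial step is fullness. Let $\psi\colon k_n(Y)\hookrightarrow k_n(X)$ be a $k$-algebra homomorphism. A $k$-homomorphism between central simple algebras of the same degree~$n$ necessarily maps center into center and makes the image generate the target as an algebra over its center (a standard consequence of the double centralizer theorem); thus $\psi(Z(k_n(Y)))\subseteq E:=Z(k_n(X))$ and $k_n(X)$ is generated over $E$ by $\psi(k_n(Y))$. Since $\knY$ is affine and $k_n(X)$ is the central localization of $\knX$, there is a nonzero $s\in Z(\knX)$ with $\psi(\knY)\subseteq\knX[s^{-1}]$; enlarging $s$, we may also assume that $\knX[s^{-1}]$ is generated by its center together with $\psi(\knY)$ (this holds after inverting all of $Z(\knX)\setminus\{0\}$, hence already after inverting a single element). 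By the localization results of Sections~3--5 of~\cite{nvar}, which extend to prime characteristic, $\knX[s^{-1}]=k_n[X']$ for a dense open $n$-subvariety $X'\subseteq X$, and the open immersion $X'\hookrightarrow X$ is a morphism in $\calC_n$ inducing the identity on $k_n(X)$. Now $\alpha:=\psi|_{\knY}\colon\knY\to k_n[X']$ is a morphism in $\PI_n$: for any surjection $\phi\colon k_n[X']\to\Mn$ we have $\phi(Z(k_n[X']))\subseteq Z(\Mn)=k$, hence $\phi(k_n[X'])=\phi\bigl(Z(k_n[X'])\cdot\psi(\knY)\bigr)=\phi(\alpha(\knY))$, which is all of $\Mn$ (cf.\ Remark~\ref{rem:alpha-in-PIn}). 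By the fullness in Theorem~\ref{thm1prime}, $\alpha=g^*$ for a regular map $g\colon X'\to Y$, which is dominant because $\alpha$ is injective; composing with $X'\hookrightarrow X$ gives a morphism $f\colon X\dasharrow Y$ in $\calC_n$ with $f^*|_{\knY}=\psi|_{\knY}$, hence $f^*=\psi$ by uniqueness of the extension to $k_n(Y)$. Combining these facts yields the stated equivalence.

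I expect fullness to be the main obstacle: one must choose the open subvariety $X'$ so that the induced algebra homomorphism genuinely satisfies the (corrected) surjectivity condition defining the morphisms of $\PI_n$, which rests both on the structural facts about $\psi$ above and on the input — imported from Sections~3--5 — that principal open subsets of $n$-varieties are again $n$-varieties carrying the expected PI-coordinate ring. A secondary point to watch is that all the ingredients used here, notably Theorem~\ref{thm1prime} and the material of Sections~3--5, do survive in prime characteristic.
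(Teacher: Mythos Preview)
Your argument is correct, but it takes a genuinely different route from the paper's.  The paper's proof is very short because the work is concentrated in Lemma~\ref{lem:first-ext-of-def-7.5}: given $\alpha\colon k_n(Y)\to k_n(X)$, one writes down $\alpha_*=(\alpha(\overline{X_1}),\dots,\alpha(\overline{X_l}))$ directly as a rational map $X\dasharrow(\Mn)^l$ and shows it lands in~$Y$ on a dense open set by an \emph{Azumaya-locus argument on the $Y$-side} (invert a central $t\in\knY$ making $\knY[t^{-1}]$ Azumaya of degree~$n$; wherever $\alpha(t)$ is defined and nonzero, the induced map $\knY\to\Mn$ extends to the Azumaya localization and is therefore automatically onto).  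Fullness and faithfulness then drop out of the identities $(\alpha_*)^*=\alpha$ and $(f^*)_*=f$ (Remark~7.6), and essential surjectivity is Theorem~7.8.  You instead handle fullness by \emph{spreading out on the $X$-side and reducing to Theorem~\ref{thm1prime}}: a Burnside/double-centralizer argument gives $k_n(X)=\psi(k_n(Y))\cdot Z(k_n(X))$, and after inverting a suitable central $s$ one has $k_n[X']=\psi(\knY)\cdot Z(k_n[X'])$, which is exactly what is needed to verify the corrected $\PI_n$-surjectivity condition for $\psi|_{\knY}$.  Both approaches hinge on forcing surjectivity of the pointwise representations $\knY\to\Mn$; yours does so uniformly via the centralizer fact, the paper's does so via the Azumaya property.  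One small imprecision: a principal open $X_s\subseteq X$ is not literally an $n$-variety in the same $\Umn$, so what you are really using is Theorem~6.4 to realize $\knX[s^{-1}]\cong k_n[X']$ for an $n$-variety $X'$ in some $U_{m',n}$, birational to~$X$; the ``open immersion'' you compose with is then the birational inverse of the induced map $X'\to X$.  This is harmless but worth stating.
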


Note one particular fact contained in this result: Every central
simple algebra in $\CS_n$ is isomorphic to $k_n(X)$ for some
irreducible $n$-variety $X$ (unique up to birational isomorphism of
$n$-varieties).  This is Theorem~7.8, which remains true in prime
characteristic. 

For an irreducible $\PGLn$-variety $X$, $\RMaps_\PGLn(X,\Mn)$ denotes
the $k$-algebra of $\PGLn$-equivariant rational maps $X\dasharrow\Mn$.
In characteristic zero, if $X$ is a generically free $\PGLn$-variety,
$\RMaps_\PGLn(X,\Mn)$ is a central simple algebra of degree~$n$, and
if $X$ is an irreducible $n$-variety, the natural inclusion of
$k_n(X)$ into $\RMaps_\PGLn(X,\Mn)$ is an isomorphism
(Proposition~7.3).  It is not known if these facts remain true in
prime characteristic.  We can prove that for an irreducible
$n$-variety $X$, $\RMaps_\PGLn(X,\Mn)$ is a central simple algebra of
degree~$n$ whose center is a finite, purely inseparable extension of
the center of $k_n(X)$ (Proposition~\ref{prop:A}).  This enables us to
prove the following result in
Section~\ref{section:second-extension-S7}.

\begin{thm}\label{thm:Dnew}
  The functor defined by
  \[ \begin{array}{rcl}      X &\mapsto &\RMaps_\PGLn(X,\Mn) \\
    (f \colon X \dasharrow Y) &\mapsto &\bigl(f^\star \colon
    \RMaps_\PGLn(Y,\Mn) \hookrightarrow
    \RMaps_\PGLn(X,\Mn)\bigr) \end{array} \]%
  is a full and faithful contravariant functor from the
  category~$\calE_n$ to the category~$\CS_n$.
\end{thm}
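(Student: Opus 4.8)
The plan is to construct the functor and then verify the three properties (well-definedness, fullness, faithfulness) by reducing as much as possible to Theorem~\ref{thm:B}, exploiting the category equivalence $\calD_n\simeq\calE_n$ and Proposition~\ref{prop:A}. First I would note that it suffices to define the functor on $\calD_n$, since the inclusion $\calD_n\hookrightarrow\calE_n$ is an equivalence; then a $\PGLn$-equivariant dominant rational map $f\colon X\dasharrow Y$ of $n$-varieties induces by composition a $k$-algebra homomorphism $f^\star\colon\RMaps_\PGLn(Y,\Mn)\to\RMaps_\PGLn(X,\Mn)$, which is automatically injective because the source is a central simple algebra (Proposition~\ref{prop:A}). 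One must check that $f^\star$ lands in $\CS_n$ and that $(g\circ f)^\star=f^\star\circ g^\star$, $(\id_X)^\star=\id$; functoriality is formal, and membership in $\CS_n$ is exactly the content of Proposition~\ref{prop:A} together with the fact that the center of $\RMaps_\PGLn(X,\Mn)$, being a finite purely inseparable extension of the finitely generated field $Z(k_n(X))$, is itself finitely generated over $k$.

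Next I would set up the comparison with the functor of Theorem~\ref{thm:B}. For an irreducible $n$-variety $X$ there is a natural inclusion $\iota_X\colon k_n(X)\hookrightarrow\RMaps_\PGLn(X,\Mn)$, and this is compatible with pullback: for a dominant rational map $f\colon X\dasharrow Y$ of $n$-varieties one has $\iota_X\circ f^*=f^\star\circ\iota_Y$. Thus the assignment $X\mapsto\iota_X$ is a natural transformation from the Theorem~\ref{thm:B} functor to (the restriction to $\calC_n$ of) the present functor. Since by Theorem~\ref{thm:B} the former is already full and faithful on $\calC_n$, and since every object of $\calD_n$ is an $n$-variety, this reduces the problem to understanding how $\Hom$ changes when one passes from $k_n(X)$ to its inseparable overalgebra $\RMaps_\PGLn(X,\Mn)$; here I would use that $\RMaps_\PGLn(X,\Mn)=k_n(X)\otimes_{Z(k_n(X))}Z(\RMaps_\PGLn(X,\Mn))$, a purely inseparable scalar extension.

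For faithfulness: if $f_1,f_2\colon X\dasharrow Y$ are $\PGLn$-equivariant dominant rational maps with $f_1^\star=f_2^\star$, then restricting the equal maps along $\iota_Y$ gives $\iota_X\circ f_1^*|_{?}=\iota_X\circ f_2^*|_{?}$ — more directly, two equivariant rational maps to $\Mn$ that agree after composition with every coordinate function agree, and since $\RMaps_\PGLn(Y,\Mn)$ contains the coordinate functions (the images of the generic matrices), $f_1^\star=f_2^\star$ forces $f_1=f_2$ as maps into $Y\subseteq\Mn^m$. For fullness: given a $k$-algebra homomorphism $\psi\colon\RMaps_\PGLn(Y,\Mn)\to\RMaps_\PGLn(X,\Mn)$ in $\CS_n$, I would first show $\psi$ carries $k_n(Y)$ into $k_n(X)$. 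This is the crux: $k_n(Y)$ is generated over its center by the images of the generic matrices, which come from $\RMaps(Y,\Mn)$; but $\psi$ need not a priori send $Z(k_n(Y))$ into $Z(k_n(X))$ rather than merely into the larger inseparable field $Z(\RMaps_\PGLn(X,\Mn))$. I expect this to be the main obstacle. To handle it I would argue that $k_n(Y)$ is characterized inside $\RMaps_\PGLn(Y,\Mn)$ as the (unique) central simple subalgebra whose center is the separable closure of $k$ in the center — equivalently, $k_n(Y)$ is the maximal subalgebra that is a central simple algebra over a field separably generated over $k$, or one can use that $k_n(Y)$ is generated over $k$ by the values of the trace ring / generic matrices and is thus the $k$-subalgebra generated by the image of a fixed surjection from the trace ring. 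Once $\psi(k_n(Y))\subseteq k_n(X)$ is established, the restriction $\psi|_{k_n(Y)}$ is a morphism in $\CS_n$, hence by Theorem~\ref{thm:B} equals $f^*$ for a unique dominant rational map $f\colon X\dasharrow Y$ of $n$-varieties; since $\psi$ is then $\iota_X\circ f^*$ extended inseparably and $\RMaps_\PGLn$ is generated over $k_n$ by its center together with elements determined on $k_n(Y)$, a counting/degree argument with purely inseparable extensions forces $\psi=f^\star$, completing fullness.
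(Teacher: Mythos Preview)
Your setup and the faithfulness argument are fine, and they match the paper's treatment: well-definedness comes from Proposition~\ref{prop:A}, and faithfulness is immediate once you observe that the images $\overline{X_1},\ldots,\overline{X_l}\in\knY\subset\RMP{Y}$ recover $f$ from $f^\star$ (this is exactly the content of $(f^\star)_\star=f$).

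The gap is in fullness. You want to force $\psi(k_n(Y))\subseteq\psi_X(k_n(X))$ by characterizing $k_n(Y)$ intrinsically inside $\RMP{Y}$, but none of the proposed characterizations work. The center $Z(k_n(Y))$ is an arbitrary finitely generated field extension of the algebraically closed field $k$; it need not be separably generated, and it is not singled out inside $k(Y)^\PGLn$ by any separability condition. Likewise ``generated by the images of the generic matrices'' depends on the chosen embedding $Y\subset U_{l,n}$ and is not preserved by an abstract $k$-algebra map $\psi$. In fact, whether $k_n(X)=\RMP{X}$ for every $n$-variety $X$ is precisely the open question~(3) listed in the introduction (equivalently, whether $\calC_n=\calD_n$); if your intrinsic characterization existed, it would go a long way toward settling that question. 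So you cannot assume $\psi$ respects the subalgebras $k_n(-)$, and the reduction to Theorem~\ref{thm:B} does not go through.

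The paper's route bypasses this entirely. Given $\alpha\colon\RMP{Y}\to\RMP{X}$, one simply sets $\alpha_\star=(\alpha(\overline{X_1}),\ldots,\alpha(\overline{X_l}))$; Lemma~\ref{lem:second-definition} shows this is a $\PGLn$-equivariant dominant rational map $X\dasharrow Y$. The point is then to prove $(\alpha_\star)^\star=\alpha$. By construction the two homomorphisms agree on $\knY$, and Lemma~\ref{lem:8.4} says this is enough: since $k(Y)^\PGLn$ is purely inseparable over $Z(k_n(Y))$, any central $g$ satisfies $g^{p^N}\in Z(k_n(Y))$, and injectivity of Frobenius on $k(X)^\PGLn$ then forces the two homomorphisms to agree on $g$ as well. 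This uniqueness lemma is the actual replacement for your missing step, and it does not require $\psi(k_n(Y))\subseteq k_n(X)$ at all.
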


In Section~\ref{section:RMaps}, we study several basic questions
regarding the algebra $\RMaps_\PGLn(X,\Mn)$.  If $X$ is an irreducible
$n$-variety, $k_n(X)$ embeds into $\RMaps_\PGLn(X,\Mn)$, and the
latter is a central simple algebra of degree~$n$, see
Proposition~\ref{prop:A}.  Hence by Theorem~7.8, there is an
irreducible $n$-variety $Y$ such that $\RMaps_\PGLn(X,\Mn)\cong
k_n(Y)$.  We show that the natural embedding of $k_n(Y)$ into
$\RMaps_\PGLn(Y,\Mn)$ is an isomorphism (even if the same should not
be true for $X$), that $Y$ is unique up to birational isomorphism of
$n$-varieties, and that $Y$ is birationally equivalent to $X$ as
$\PGLn$-varieties (though maybe not as $n$-varieties), see
Corollary~\ref{cor:9.7} and Proposition~\ref{prop:9.3}.

We also address the question for which irreducible $\PGLn$-varieties
$X$, the algebra $\RMaps_\PGLn(X,\Mn)$ is a central simple algebra of
degree~$n$.  In characteristic zero, a necessary and sufficient
condition is that the $\PGLn$-action on $X$ is generically free, see
\cite[Lemma~2.8]{gacsa}.  In prime characteristic, we give a different
criterion in Corollary~\ref{cor:9.3}: the existence of a
$\PGLn$-equivariant dominant rational map $X\dasharrow Y$ for some
irreducible $n$-variety $Y$.

We conclude with several interrelated open questions in prime
characteristic.  Let $X$ be an irreducible generically free
$\PGLn$-variety, and set $\R(X)=\RMaps_\PGLn(X,\Mn)$.
\begin{enumerate}
\item Is $X$ birationally isomorphic {\upshape(}as
  $\PGLn$-variety{\upshape)} to an $n$-variety? Equivalently, is
  $\calE_n=\Bir_n$? (Cf.\ Lemma~8.1.)
\item Is $\R(X)$ a central simple algebra of degree~$n$? (Cf.\
  \cite[Lemma~2.8]{gacsa}, Proposition~\ref{prop:A}, and
  Corollary~\ref{cor:9.3}.) 
\item If $X$ is an $n$-variety, is $k_n(X)=\R(X)$?  That is, is the
  center of $k_n(X)$ always $k(X)^\PGLn$?  Equivalently, is
  $\calC_n=\calD_n$?  (Cf.\ Propositions~7.3 and~\ref{prop:A}.)
\item Is every central simple algebra in $\CS_n$ isomorphic to $\R(X)$
  for some $X$ (or for some irreducible $n$-variety $X$)? (Cf.\
  Theorem~7.8.)
\end{enumerate}

\smallskip

\noindent %
{\bf Acknowledgments.} The author is grateful to Zinovy Reichstein and
the referee for helpful comments and suggestions.

\section{Invariant-Theoretic Background for Prime Characteristic}
\label{section:inv-background}

Donkin \cite{donkin} proved Procesi's Conjecture
\cite[p.~308/309]{procesi1}: The ring of invariants 
\[\Cmn=k[\Mnm]^\PGLn\]
for the action of $\PGLn$ on $\Mnm$ by simultaneous conjugation is the
algebra generated by the elements
\[c_s(X_{i_1}X_{i_2}\cdots X_{i_r})\,,\]
where the $X_i$ are generic matrices and $c_s$ denotes the coefficient
in degree $n-s$ of the characteristic polynomial.  Note that Donkin
describes the generators as the functions
\[(x_1,\ldots,x_m)\mapsto\tr\bigl(x_{i_1}x_{i_2}\cdots
x_{i_r},\bigwedge{}^s(k^n)\bigr)\,.\]
But it is easy to see that this function is the function $\Mnm\to k$
induced by $c_s(X_{i_1}X_{i_2}\cdots X_{i_r})$ by verifying that
$\tr\bigl(A,\bigwedge{}^s(k^n)\bigr)=c_i(A)$ for every $A\in \Mn$.
(Prove this first for diagonalizable $A$ and then use a density
argument.)

Other proofs of Procesi's Conjecture are given in \cite{zubkov:MR96c},
\cite{domokos-zubkov} and \cite{domokos}.  We remark that the ring of
invariants $\Cmn$ has been studied extensively; for work in prime
characteristic see, e.g., \cite{zubkov:MR95c}, \cite{zubkov:MR2002h},
and \cite{domokos-kuzmin-zubkov}.

We denote by $\Qmn$ the affine variety with coordinate ring $\Cmn$.
Since $\PGLn$ is reductive, the latter ring is affine, and the natural
map
\begin{equation}
  \label{eq:pi}
  \pi\colon \Mnm\to\Qmn  
\end{equation}
is a categorical quotient map.  In particular, $\pi$ is surjective,
$\pi$ maps closed $\PGLn$-stable subsets to closed subsets, and $\Cmn$
separates disjoint closed $\PGLn$-stable subsets of $\Mnm$.  These are
all well-known facts, see \cite[Section 13.2, and in particular
Theorem 13.2.4]{FSR} or \cite[Theorem 6.1 on page~97 (see the
definition on page~94)]{dolgachev}.  Separation of disjoint closed
invariant subsets is also proved in \cite[Corollary~A.1.3, p.~151]{mf}.

As usual, $\Tmn$ is the trace ring of the generic matrix ring
\[\Gmn=k\{X_1,\ldots, X_m\}\]
(which is generated by the $m$ generic $n\times n$ matrices
$X_1$,\ldots, $X_m$).  In prime characteristic, $\Tmn$ is generated
over $\Gmn$ by adjoining all coefficients of the characteristic
polynomial of each element of $\Gmn$.  So $\Tmn$ is a central
extension of $\Gmn$.  As is well-known, $\Tmn$ is affine and finite
over its center.

We can think of the elements of $\Tmn$ as $\PGLn$-equivariant regular
functions $\Mnm\to\Mn$.  This allows us to identify $\Cmn$ with the
center of $\Tmn$, as we did in characteristic zero: As usual, $\Tmn$
is a subalgebra of $n\times n$ matrices over a large polynomial ring.
Since $\Tmn$ has PI-degree $n$, its center is contained in the center
of that matrix ring, so consists of scalar matrices.  The center of
$\Tmn$ thus consists of $\PGLn$-equivariant scalar-valued functions,
i.e., $\PGLn$-invariant functions $\Mnm\to kI_n$, where $I_n$ denotes
the identity matrix in $\Mn$.  Identifying $k$ with $k I_n$, we see
that the center of $\Tmn$ embeds into $\Cmn$.  But by Donkin's result,
the center of $\Tmn$ contains the generators of $\Cmn$, so that the
embedding of the center of $\Tmn$ into $\Cmn$ is actually an
isomorphism.  

In characteristic zero, Procesi proved that $\Tmn$ consists precisely
of the regular $\PGLn$-equivariant maps $\Mnm\to\Mn$, see
\cite[Theorem~2.1]{procesi1}.  This is also true in prime
characteristic.  Indeed, using Donkin's result, Procesi's proof goes
through nearly literally: one can still use the nondegeneracy of the
trace form.  But $\bar f$ is now a polynomial in Donkin's generators.
Note that $\bar f$ is ``homogeneous in $X_{i+1}$ of degree 1''.  Now
$c_s$ applied to a monomial $M$ in $X_1,\ldots,X_{i+1}$ involving $r$
copies of $X_{i+1}$ is homogeneous of degree $rs$.  Since $c_1=\tr$,
one can still write $\bar f$ exactly as in the second displayed
equation of Procesi's proof, and the rest of the proof goes through.
(To make the degree argument precise: Recall that $\Gmn$ and $\Tmn$
are contained in the $n\times n$ matrices over the commutative
polynomial ring in the entries of the generic matrices.  Define a
degree function by giving each entry of $X_{i+1}$ degree~$1$ and all
other variables degree~$0$.  Then $\bar f$ is homogeneous of
degree~$1$, and $c_s(M)$ is homogeneous of degree~$rs$.)

\section{Preliminaries on Trace Rings}
\label{section:tracering-background}

If $R$ is a prime PI-ring of PI-degree $n$, its total ring of
fractions is a central simple algebra of degree~$n$, and the trace
ring $T(R)$ of $R$ is obtained from $R$ by adjoining to $R$ all
coefficients of the reduced characteristic polynomial of every element
of $R$.

We will repeatedly use the well-known fact, due to Amitsur, that
\begin{equation}
  \label{eq:amitsur}
  T(T(R)) = T(R)\,.  
\end{equation}
This follows immediately from~\cite{amitsur:multilin}: Denote by $Z$
the center of $T(R)$.  Then every element of $T(R)$ is a $Z$-linear
combination of elements of $R$.  By~\cite{amitsur:multilin}, the
coefficients of the characteristic polynomial of such a linear
combination are polynomial expressions (with coefficients in $Z$) in
the coefficients of the characteristic polynomials of certain elements
of $R$, so belong to $T(R)$, implying~\eqref{eq:amitsur}.

We will also need the following result:

\begin{thm} {\sc(Amitsur-Small)}\label{thm:amitsur-small}\ \ 
  Let $\phi\colon R\to S$ be a surjective ring homomorphism between
  prime PI-rings of the same PI-degree~$n$.  
  Then $\phi$ extends uniquely to a homomorphism $\phi'\colon T(R)\to
  T(S)$ that preserves characteristic polynomials; i.e.,
  $\phi'(c_s(r))=c_s(\phi(r))$ for all $r\in R$ and $1\leq s\leq n$.
  It follows immediately that also $\phi'$ is surjective.  Moreover,
  if $R=T(R)$ {\upshape(}e.g., if $R=\Tmn${\upshape)}, then $S=T(S)$
  and $\phi$ preserves characteristic polynomials. \qed
\end{thm}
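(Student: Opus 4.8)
The plan is to build the extension $\phi'$ by hand on the level of coefficients of characteristic polynomials, and then to verify it is a well-defined ring homomorphism. First I would recall the concrete description of $T(R)$: by definition $T(R)$ is generated over $R$ by the elements $c_s(r)$ for $r\in R$ and $1\le s\le n$, and these lie in the center of $T(R)$ (since $R$ has PI-degree~$n$, its central closure has the coefficients of reduced characteristic polynomials in the center). So every element of $T(R)$ is a finite sum $\sum z_i r_i$ with $r_i\in R$ and $z_i$ in the subring of the center generated by the $c_s(r)$. The natural candidate is to set $\phi'(c_s(r))=c_s(\phi(r))$ and extend multiplicatively and additively, i.e.\ $\phi'(\sum z_i r_i)=\sum \phi'(z_i)\phi(r_i)$. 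The content of the theorem is that this recipe is consistent.

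The key input making consistency work is Amitsur's theorem on multilinear identities (the same result cited for~\eqref{eq:amitsur}): the coefficients of the characteristic polynomial of a $Z$-linear combination $\sum z_i r_i$ are polynomial expressions, with coefficients that are universal (integer, or in the prime field) constants, in the $z_i$ and in the quantities $c_s(\text{products of the }r_i)$. Concretely, there are universal noncommutative-polynomial identities expressing $c_s$ of a linear combination in terms of traces of monomials, and traces are themselves among the $c_1$'s. I would argue as follows. Suppose $\sum_i z_i r_i = \sum_j w_j s_j$ are two representations in $T(R)$ of the same element, where the $z_i,w_j$ lie in the central subring generated by the $c_s(R)$. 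Then the difference $\sum z_i r_i - \sum w_j s_j = 0$ in $T(R)$. Apply the multilinearity result: $c_s$ of any $T(R)$-element which is a central combination of elements of $R$ is given by a fixed polynomial formula in the central coefficients and in the $c_t$ of monomials in the underlying $R$-elements; since the element is $0$, all its $c_s$ vanish, giving polynomial relations among the $c_t(r_i)$, $z_i$, $c_t(s_j)$, $w_j$. Now these same polynomial relations, being universal, hold after applying $\phi$ and $c_s$ — i.e.\ they hold among the $c_t(\phi(r_i))$, $\phi'(z_i)$, $c_t(\phi(s_j))$, $\phi'(w_j)$ in $T(S)$ — which forces $\sum \phi'(z_i)\phi(r_i) - \sum \phi'(w_j)\phi(s_j)$ to have all characteristic-polynomial coefficients zero; since $T(S)$ is a prime PI-ring of degree $n$ (hence embeds in matrices over a field), an element with vanishing characteristic polynomial is nilpotent, and in fact the element is central times... — more carefully, I would show directly that the difference is $0$. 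The cleanest route: it suffices to check $\phi'$ is well-defined on each central generator and on products; the only possible obstruction is a relation $\sum z_i r_i = 0$ with $z_i$ central, and applying $c_1=\operatorname{tr}$ after multiplying by arbitrary $r\in R$ and using nondegeneracy of the trace form on $T(S)\otimes_{Z(S)}Q$ (with $Q$ the field of fractions of the center) kills the image. This is the step I expect to be the real obstacle, and it is exactly the point where Amitsur's multilinearity result, rather than mere formal manipulation, is essential.

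Once $\phi'$ is known to be a well-defined ring homomorphism extending $\phi$ and satisfying $\phi'(c_s(r))=c_s(\phi(r))$, uniqueness is immediate: any extension preserving characteristic polynomials must send $c_s(r)\mapsto c_s(\phi(r))$, and these together with $R$ generate $T(R)$. Surjectivity of $\phi'$ follows because $T(S)$ is generated over $S=\phi(R)$ by the $c_s(\phi(r))=\phi'(c_s(r))$, all of which are in the image. For the last assertion: if $R=T(R)$, then $T(S)=T(\phi(R))=T(\phi(T(R)))$; but $\phi'(T(R))$ already contains $\phi(R)=S$ and all $c_s(\phi(r))$ for $r\in R=T(R)$, hence all $c_s(s)$ for $s\in S$, so $T(S)\subseteq \phi'(T(R))=S$, giving $S=T(S)$; and then $\phi=\phi'$ preserves characteristic polynomials by construction. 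I would cite Amitsur--Small directly (\cite{amitsur-small} in the present paper's reference list) for the statement, presenting the above as the argument; in the paper this is stated as a known theorem, so the proof may simply be the citation together with the remark that the moreover-part is an immediate consequence of~\eqref{eq:amitsur} applied to $R=T(R)$.
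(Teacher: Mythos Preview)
Your final paragraph is exactly right: the paper gives no proof at all. The theorem is stated with a \qed and followed by the sentence ``This is \cite[Theorem~2.2]{amitsur:small}'' (plus two parenthetical remarks about notation and a typo in that reference). The ``moreover'' clause is not argued separately; it is folded into the statement as an immediate consequence, just as you say, of the fact \eqref{eq:amitsur} that $T(\Tmn)=\Tmn$.

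As for the self-contained argument you sketch beforehand: the overall strategy (define $\phi'$ on the generators $c_s(r)$, invoke Amitsur's universal formulas for $c_s$ of central linear combinations to control relations) is the right circle of ideas, but your well-definedness step has a genuine gap that you yourself flag. From $\sum z_i r_i=0$ in $T(R)$ you deduce that the image $\sum \phi'(z_i)\phi(r_i)$ has all characteristic coefficients zero, hence is nilpotent; but nilpotent is not zero, and your attempt to close the gap via nondegeneracy of the trace form and ``multiply by arbitrary $r\in R$'' is not completed (and would itself need $\phi'$ already well-defined to make sense of $\phi'(c_1(r\cdot\sum z_ir_i))$). The actual Amitsur--Small argument proceeds differently: one shows that the ideal of $T(R)$ generated by $\ker\phi$ meets $R$ exactly in $\ker\phi$ (this uses that $R$ and $S$ have the same PI-degree), and that $T(R)$ modulo this ideal is the trace ring of $S$. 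Since the paper simply cites the result, none of this is needed here; your closing sentence is the correct proposal.
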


This is \cite[Theorem~2.2]{amitsur:small}.  (Note the typo in the
statement of this result: ``into'' should be ``onto''.  Note also that
the trace ring of $R$ is in~\cite{amitsur:small} denoted by $T(R)R$.)

\section{Remarks to Section 2}
\label{rem:section2}

Everything in Section~2 extends to prime characteristic if one uses
the facts and the modified definitions from
Sections~\ref{section:inv-background}
and~\ref{section:tracering-background}.  Note that the statement of
Lemma 2.6, though correct in prime characteristic, can be strengthened
in the natural way, see Lemma~\ref{lem:2.6-in-prime-char}.

{\bf 2.1} was addressed in Section~\ref{section:inv-background}.
Note that $\Cmn$ has different generators in prime characteristic.

{\bf Proposition 2.3} is also true in prime characteristic; since it
is of fundamental importance for this paper, we restate it formally.
We denote by $\Umn$ the following $\PGLn$-stable dense open subset of
$\Mnm$:
\[ \Umn = \{ (a_1,\ldots,a_m) \in \Mnm \, | \,
   \text{$a_1,\ldots,a_m$ generate $\Mn$ as $k$-algebra}\} \]
Note that
\begin{equation}
  \label{eq:n-var-triv-stab}
  \Stab_\PGLn(x)=1 \qquad   \forall\,\, x\in\Umn\,\,.
\end{equation}
Here $\Stab_\PGLn(x)$ denotes the group-theoretic stabilizer of $x$ in
$\PGLn$.  Recall the categorical quotient map $\pi\colon\Mnm\to\Qmn$
from~\eqref{eq:pi}.

\begin{prop} \label{prop:2.3-in-prime-char}
  Proposition~2.3 is true in arbitrary characteristic.  That is:
  \begin{enumerate}
  \item[\upshape(a)] If $x \in U_{m, n}$ then $\pi^{-1}(\pi(x))$ is
    the $\PGLn$-orbit of $x$.
  \item[\upshape(b)]$\PGLn$-orbits in $\Umn$ are closed in $\Mnm$.
  \item[\upshape(c)]$\pi$ maps closed $\PGLn$-invariant sets in $\Mnm$
    to closed sets in $Q_{m, n}$.
  \item[\upshape(d)]$\pi(U_{m,n})$ is Zariski open in $\Qmn$.
  \item[\upshape(e)]If $Y$ is a closed irreducible subvariety of
    $Q_{m, n}$ then $\pi^{-1}(Y) \cap U_{m, n}$ is irreducible in
    $\Mnm$.
  \end{enumerate}
\end{prop}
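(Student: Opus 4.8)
The plan is to deduce Proposition~\ref{prop:2.3-in-prime-char} from the invariant-theoretic facts assembled in Section~\ref{section:inv-background}, following the structure of the original proof of Proposition~2.3 but checking that each ingredient survives in prime characteristic. The key point throughout is that $\pi\colon\Mnm\to\Qmn$ is a categorical quotient for the reductive group $\PGLn$, so it is surjective, closed on $\PGLn$-stable sets, and separates disjoint closed invariant subsets; these were recalled after~\eqref{eq:pi}. Part~(c) is then immediate. For part~(b), note that if $x\in\Umn$ then $\Stab_\PGLn(x)=1$ by~\eqref{eq:n-var-triv-stab}, so the orbit $\PGLn\cdot x$ has dimension $\dim\PGLn=n^2-1$; since every orbit closure contains a closed orbit of dimension no larger, and a proper closed subset of $\overline{\PGLn\cdot x}$ in its boundary would have strictly smaller dimension yet still be a union of orbits, the only way for $\PGLn\cdot x$ not to be closed would be for a closed orbit in its boundary to have dimension $<n^2-1$; but by Amitsur's theorem the point $x$ corresponds to a surjection $\Gmn\twoheadrightarrow\Mn$, and any point in $\overline{\PGLn\cdot x}$ has semisimplification whose associated semisimple algebra is a quotient, forcing full dimension as well — so the orbit is closed. (Alternatively one invokes the standard fact that orbits with trivial, hence finite, stabilizer of maximal dimension among nearby orbits are closed.)

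For part~(a): given $x\in\Umn$, any $y\in\pi^{-1}(\pi(x))$ has $\overline{\PGLn\cdot y}\cap\overline{\PGLn\cdot x}\ne\emptyset$ because $\pi$ separates disjoint closed invariant sets and $\pi(x)=\pi(y)$; since $\PGLn\cdot x$ is closed by~(b), this forces $\overline{\PGLn\cdot x}\subseteq\overline{\PGLn\cdot y}$, hence $\dim\overline{\PGLn\cdot y}\ge n^2-1$, so $\PGLn\cdot y$ is itself $(n^2-1)$-dimensional and its closure meets the closed orbit $\PGLn\cdot x$ in a nonempty proper-or-full closed invariant subset; comparing dimensions, $\overline{\PGLn\cdot y}=\PGLn\cdot x$, and since an orbit is open in its closure, $\PGLn\cdot y=\PGLn\cdot x$. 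Thus $\pi^{-1}(\pi(x))=\PGLn\cdot x$. Part~(d) follows from~(a): the fiber over each point of $\pi(\Umn)$ is a single $(n^2-1)$-dimensional orbit, so $\pi(\Umn)$ is exactly the set of points of $\Qmn$ with full-dimensional fiber, and by upper semicontinuity of fiber dimension (Chevalley) this set is open; alternatively, $\Umn=\pi^{-1}(\pi(\Umn))$ by~(a) and $\Umn$ is open and $\PGLn$-saturated, so its image is open since $\pi$, being a categorical quotient by a reductive group, is an open map on saturated opens (or: its complement is closed $\PGLn$-stable, apply~(c)).

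Part~(e) is where the real work lies. Let $Y\subseteq\Qmn$ be closed and irreducible; we must show $Z:=\pi^{-1}(Y)\cap\Umn$ is irreducible. The map $\pi|_Z\colon Z\to Y$ is surjective (using~(a), every point of $Y$ that lies in $\pi(\Umn)$ has a preimage in $Z$, but one must check $Y\cap\pi(\Umn)$ is dense in $Y$ — this holds because $\pi(\Umn)$ is open dense in $\Qmn$ by~(d) and $Y$ is irreducible, provided $Y\cap\pi(\Umn)\ne\emptyset$, which we may assume, the other case giving $Z=\emptyset$) with all fibers single orbits, hence irreducible of constant dimension $n^2-1$. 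The standard criterion — a dominant morphism to an irreducible base with irreducible equidimensional fibers, plus openness or flatness, has irreducible source — would finish it, but equidimensionality of fibers alone does not guarantee irreducibility of the total space without a hypothesis like openness of the map. \textbf{The main obstacle} is therefore to establish enough regularity of $\pi|_Z$ (e.g.\ that it is open, or that $Z$ is a principal homogeneous space étale- or fppf-locally over $Y$ away from a closed subset) to conclude. I expect this to go through as in characteristic zero: on $\Umn$ the stabilizers are trivial, so $\Umn\to\pi(\Umn)$ is a $\PGLn$-torsor in an appropriate (flat, even étale in char.\ $0$) sense, and torsors over irreducible bases have irreducible total space since $\PGLn$ is connected; in prime characteristic one checks the action map $\PGLn\times\Umn\to\Umn\times_{\pi(\Umn)}\Umn$ is an isomorphism (injectivity from trivial stabilizers and~(a), surjectivity from~(a), and it is a bijective morphism of smooth varieties over a perfect field — one must rule out inseparability, which follows because the differential of the action is injective as the stabilizer Lie algebra is trivial when the stabilizer scheme is trivial, and over $\Umn$ the scheme-theoretic stabilizer is indeed trivial, not merely the reduced stabilizer). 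Granting that $\Umn$ is a $\PGLn$-torsor over $\pi(\Umn)$, base-changing along $Y\cap\pi(\Umn)\hookrightarrow\pi(\Umn)$ shows $Z$ is a $\PGLn$-torsor over the irreducible variety $Y\cap\pi(\Umn)$, hence irreducible (torsor under a connected group), and $Z$ is the closure of this open dense subset inside $\pi^{-1}(Y)\cap\Umn$, so $Z$ itself is irreducible. The one subtlety to be careful about is whether the scheme-theoretic stabilizer on $\Umn$ is trivial rather than just its reduced subscheme; I would address this by the argument that if $a_1,\dots,a_m$ generate $\Mn$ then any element of the stabilizer Lie algebra $\pgl_n$ would be an element of $\Mn$ commuting with all $a_i$, hence a scalar, hence zero in $\pgl_n$ — so the stabilizer scheme is étale, and being a subgroup scheme of $\PGLn$ with trivial $k$-points it is trivial.
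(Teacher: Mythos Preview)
Your proof has a genuine gap in part~(b), and since your argument for~(a) rests on~(b), both are undermined. The ``standard fact'' you invoke---that orbits with trivial stabilizer and maximal dimension among nearby orbits are closed---is false: $\mathbb{G}_m$ acting on $\mathbb{A}^1$ by scaling gives an orbit (of any nonzero point) that has trivial stabilizer, maximal dimension, only orbits of the same dimension nearby, and is not closed. Your primary argument for~(b) is also not sound: you assert that a boundary point of $\overline{\PGLn\cdot x}$ has ``semisimplification whose associated semisimple algebra is a quotient, forcing full dimension,'' but degeneration of a representation does not produce a quotient algebra, and no contradiction is actually derived. What is true is that any closed orbit in $\overline{\PGLn\cdot x}$ has the \emph{same} invariants as $x$ (since $\pi$ is constant on orbit closures); showing that this forces the orbit to coincide with $\PGLn\cdot x$ is exactly the content of~(a), so your route is circular.

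The paper reverses your order: it proves~(a) directly by an explicit computation adapted from Artin. Given $x\in\Umn$ and $x'\in\pi^{-1}(\pi(x))$, one exploits that $\phi_x$ and $\phi_{x'}$ have identical characteristic-polynomial data to construct, after two conjugations of~$x'$, elements $z_{i,j}\in\Gmn$ sent to the matrix units $e_{i,j}$ under both $\phi_x$ and $\phi_{x'}$; a trace calculation then gives $\phi_x=\phi_{x'}$, so $x'=x$. Part~(b) is then immediate: $\PGLn\cdot x=\pi^{-1}(\pi(x))$ is the preimage of a point, hence closed. Your topological deduction of~(a) from~(b) is correct and pleasant, but without an independent proof of~(b) it cannot stand on its own.

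For~(d), your second argument (the complement of $\Umn$ is closed and $\PGLn$-stable, so apply~(c) and use $\Umn=\pi^{-1}(\pi(\Umn))$ from~(a)) is fine. For~(e), your torsor approach is a genuinely different route from the paper, which simply records that the characteristic-zero argument goes through; your version is viable but demands verifying that $\Umn\to\pi(\Umn)$ is a $\PGLn$-torsor (in particular, that the scheme-theoretic stabilizers vanish, which you handle correctly via the Lie algebra), after which smoothness of the torsor map and connectedness of $\PGLn$ give irreducibility of the pullback over $Y\cap\pi(\Umn)$.
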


\begin{proof}
  Given (a) (which we prove below), (b) follows immediately.  Part~(c)
  was addressed in Section~\ref{section:inv-background}.
  Concerning~(d) and~(e): the proofs of the corresponding parts of
  Proposition~2.3 go through without changes.

  (a) This is an adaptation of part of the proof of
  \cite[(12.6)]{artin1}.  Let $x\in \Umn$, and let $\phi=\phi_x$ be
  the corresponding surjective (thus simple) representation
  $\Gmn\to\Mn$, $p\mapsto p(x)$.  Let $x'\in\pi\inv(\pi(x))$, and let
  $\phi'=\phi_{x'}\colon \Gmn\to\Mn$ be the corresponding (potentially
  not surjective) representation.  We have to show that $x'$ belongs
  to the $\PGLn$-orbit of $x$.  To do this, we will twice replace $x'$
  by another element in its own $\PGLn$-orbit, and then prove $x'=x$.

  Note that the representations $\phi$ and $\phi'$ preserve
  characteristic polynomials.  (For $\phi'$, this follows, e.g., from
  the fact that, using the notation in~2.4, $\phi'$ can be extended to
  the homomorphism $\Mn(k[x_{i,j}^{(h)}])\to\Mn$ given by evaluating
  the indeterminates $x_{i,j}^{(h)}$ at the corresponding entries of
  the $m$-tuple $x'\in\Mnm$.)  Since $\pi(x)=\pi(x')$, $f(x)=f(x')$
  for all $f\in\Cmn$.  Let $p\in\Gmn$, and denote by $c_s(p)$ the
  coefficient in degree $n-s$ of the characteristic polynomial of $p$.
  Since $c_s(p)\in\Cmn$,
  \[c_s(\phi(p))=\phi(c_s(p))=c_s(p)(x)=c_s(p)(x')
  =\phi'(c_s(p))=c_s(\phi'(p))\,.\]
  Consequently, for all $p\in\Gmn$, $\phi(p)$ and $\phi'(p)$ have the
  same trace and the same characteristic polynomial; we will use this
  repeatedly below.

  Since $\phi$ is onto, there is a $z\in\Gmn$ such that $\phi(z)$ is a
  diagonal matrix with distinct eigenvalues.  Since $\phi'(z)$ has the
  same characteristic polynomial as $\phi(z)$, it is in the
  $\PGLn$-orbit of $\phi(z)$ in $\Mn$.  Replacing $x'$ by some other
  element in its $\PGLn$-orbit (and, of course, also changing
  $\phi'=\phi_{x'}$), we may assume that $\phi(z)=\phi'(z)$ is a
  diagonal matrix with distinct eigenvalues.  It is possible to
  express the elementary matrix units $e_{i,i}$ as polynomials (with
  coefficients in $k$) in this diagonal matrix.  Hence there exist
  $z_{i,i}\in\Gmn$ such that $\phi(z_{i,i})=\phi'(z_{i,i})=e_{i,i}$
  for all $i=1,\ldots,n$.

  Since $\phi$ is onto, we can choose, for $i,j\in\{1,\ldots,n\}$ with
  $i\neq j$, elements $\tilde z_{i,j}$ in $\Gmn$ such that
  $\phi(\tilde z_{i,j})=e_{i,j}$.  For $i\neq j$, set
  $z_{i,j}=z_{i,i}\tilde z_{i,j}z_{j,j}$.  Then
  $\phi(z_{i,j})=e_{i,j}$ for all $i$ and $j$.  Now for $i\neq j$,
  $\phi'(z_{i,j})=e_{i,i}\phi'(\tilde
  z_{i,j})e_{j,j}=\alpha_{i,j}e_{i,j}$ for certain scalars
  $\alpha_{i,j}\in k$.  Now $\phi(z_{i,j}z_{j,i})=e_{i,i}$ and
  $\phi'(z_{i,j}z_{j,i})=\alpha_{i,j}\alpha_{j,i}e_{i,i}$ have the
  same trace, i.e., $1=\alpha_{i,j}\alpha_{j,i}$.  In particular,
  $\alpha_{i,j}\neq0$.  Hence
  $\gamma=e_{1,1}+\alpha_{1,2}e_{2,2}+\cdots+\alpha_{1,n}e_{n,n}$ is
  in $\GL_n$.  

  Set $\phi''=\gamma\phi'\gamma\inv$.  Then one checks readily that
  $\phi''(z_{i,i})=e_{i,i}$ for $i=1,\ldots, n$, and that
  $\phi''(z_{1,j})=e_{1,j}$ for $j=2,\ldots, n$.  Replacing $x'$ by
  $\gamma x'\gamma\inv$ (and thus $\phi'$ by
  $\phi''=\gamma\phi'\gamma\inv$), we may assume that $\alpha_{1,j}=1$
  for $j=2,\ldots,n$.  Since $\alpha_{i,j}\alpha_{j,i}=1$, also
  $\alpha_{j,1}=1$.  So for all $i$ and $j$,
  \[\phi'(z_{1,i})=e_{1,i} \quad\text{and}\quad \phi'(z_{j,1})=e_{j,1}\,.\]

  Finally, let $p\in\Gmn$ be arbitrary.  Say
  $\phi(p)=(\beta_{i,j}), \phi'(p)=(\beta'_{i,j})\in\Mn$.  Then for
  all $i$ and $j$,
  \begin{align*}
    \beta'_{i,j}&=\tr(\beta'_{i,j}e_{1,1})=\tr(e_{1,i}\phi'(p)e_{j,1})
               =\tr(\phi'(z_{1,i}pz_{j,1}))\\
    &=\tr(\phi(z_{1,i}pz_{j,1}))=\tr(e_{1,i}\phi(p)e_{j,1})
     =\tr(\beta_{i,j}e_{1,1})=\beta_{i,j}\,,
  \end{align*}
  so that $\phi'(p)=\phi(p)$.  Thus $\phi'=\phi$ and $x'=x$,
  concluding the proof of~(a).
\end{proof}

{\bf 2.4} was addressed in Section~\ref{section:inv-background}.

{\bf 2.5---2.10} carry over to prime characteristic.  We make two remarks:

{\bf Lemma 2.6} remains true in prime characteristic as stated, but
parts~(a) and~(b) can be strengthened as follows:

\begin{lem}\label{lem:2.6-in-prime-char}
  Let $J$ be a prime ideal in $\Spec_n(\Tmn)$, i.e., a prime ideal of
  $\Tmn$ such that $\Tmn/J$ has PI-degree~$n$.
  \begin{enumerate}
  \item[\upshape(a)]The natural projection $\phi \colon T_{m, n} \lra
    T_{m, n}/J$ preserves characteristic polynomials, and $\Tmn/J$ is
    the trace ring of $\Gmn/(J \cap G_{m, n})$.
  \item[\upshape(b)]For every $p\in J$, $c_i(p)\in J$ for
    $i=1,\ldots,n$.  In particular, $\tr(p) \in J$.
  \end{enumerate}
\end{lem}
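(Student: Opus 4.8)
The plan is to deduce both parts from the Amitsur–Small theorem (Theorem~\ref{thm:amitsur-small}) applied to the natural surjection $\phi\colon\Tmn\to\Tmn/J$. Since $J\in\Spec_n(\Tmn)$, the quotient $\Tmn/J$ is a prime PI-ring of PI-degree~$n$, so $\phi$ is a surjective homomorphism between prime PI-rings of the same PI-degree. Moreover $\Tmn=T(\Tmn)$ by Amitsur's identity~\eqref{eq:amitsur} (with $R=\Gmn$). Hence the last sentence of Theorem~\ref{thm:amitsur-small} applies directly: $\Tmn/J=T(\Tmn/J)$ and $\phi$ preserves characteristic polynomials, i.e.\ $\phi(c_s(p))=c_s(\phi(p))$ for all $p\in\Tmn$ and all $s$. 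This is the first assertion of~(a).

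For the remaining claim of~(a), that $\Tmn/J$ is the trace ring of $\Gmn/(J\cap\Gmn)$: write $\bar G=\Gmn/(J\cap\Gmn)$, which embeds naturally into $\Tmn/J$ as the image $\phi(\Gmn)$. I would first check that $\bar G$ is prime of PI-degree~$n$ (it is a subring of the prime PI-degree-$n$ ring $\Tmn/J$ generated by the images of the generic matrices; primeness and PI-degree~$n$ follow because $\phi$ restricted to $\Gmn$ is again given by evaluation, and one can invoke the same reasoning used for $\Tmn/J$ itself, or observe $\bar G$ and $\Tmn/J$ share a total ring of fractions). Then, since $\phi$ preserves characteristic polynomials, the coefficients $c_s(\phi(p))=\phi(c_s(p))$ for $p\in\Gmn$ all lie in $\phi(\Tmn)=\Tmn/J$; conversely $\Tmn$ is generated over $\Gmn$ by the $c_s(p)$, $p\in\Gmn$, so applying $\phi$ shows $\Tmn/J$ is generated over $\bar G$ by the $c_s(\bar p)$, $\bar p\in\bar G$. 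That is exactly the statement that $\Tmn/J=T(\bar G)$.

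Part~(b) is then immediate: if $p\in J$ then $\phi(p)=0$, so $c_i(\phi(p))=c_i(0)=0$ for $i=1,\ldots,n$ (the characteristic polynomial of $0$ is $\lambda^n$), and by the characteristic-polynomial-preserving property $\phi(c_i(p))=c_i(\phi(p))=0$, i.e.\ $c_i(p)\in\ker\phi=J$. Taking $i=1$ (and recalling $c_1=\tr$ up to sign, as normalized in Section~\ref{section:tracering-background}) gives $\tr(p)\in J$.

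I expect the only genuine point requiring care is the identification in~(a) of $\Tmn/J$ as $T(\bar G)$ — specifically verifying that $\bar G=\Gmn/(J\cap\Gmn)$ is prime of PI-degree exactly~$n$, so that ``trace ring of $\bar G$'' is meaningful and the trace-ring construction for $\bar G$ uses the reduced characteristic polynomial of degree~$n$. This should follow from the fact that $J\cap\Gmn$ is the kernel of the composite $\Gmn\hookrightarrow\Tmn\to\Tmn/J$, whose image is a PI-degree-$n$ prime ring, but I would spell this out. Everything else is a formal consequence of Theorem~\ref{thm:amitsur-small} and~\eqref{eq:amitsur}.
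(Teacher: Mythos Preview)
Your proposal is correct and follows essentially the same approach as the paper: both deduce everything from Theorem~\ref{thm:amitsur-small} together with~\eqref{eq:amitsur}. The only cosmetic difference is that for the second claim in~(a) the paper applies Theorem~\ref{thm:amitsur-small} a second time directly to the surjection $\Gmn\to\Gmn/(J\cap\Gmn)$ (using uniqueness of the extension to identify its image with $\Tmn/J$), whereas you spell out the equivalent generating-set argument; your flagged concern about $\bar G$ being prime of PI-degree~$n$ is indeed the one point tacitly assumed in the paper's one-line proof, and your suggested justification (that $\Tmn/J$ is a central extension of $\bar G$, so they share a total ring of fractions) is the right one.
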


\begin{proof}
  (a) Since $\Tmn$ is its own trace ring by \eqref{eq:amitsur},
  Theorem~\ref{thm:amitsur-small} first implies that $\phi$ preserves
  characteristic polynomials, and then, applying it to the natural map
  $\Gmn\to\Gmn/(J\cap\Gmn)$, that $\Tmn/J$ is the trace ring of
  $\Gmn/(J \cap G_{m, n})$.  (b)~Since
  $\phi(c_i(p))=c_i(\phi(p))=c_i(0)=0$, $c_i(p)\in J$.
\end{proof}

{\bf Lemma 2.10:} The use of \cite[Theorem~(9.2)]{artin1} can be
avoided with a short, direct argument: If $\phi_i$ and $\phi_j$ had
the same kernel, say $I$, then they would induce $k$-algebra
isomorphisms $\Gmn/I\to\Mn$.  It follows easily that $\phi_i$ and
$\phi_j$ would then differ by an automorphism of $\Mn$, i.e., by an
element of $\PGLn$, a contradiction.

\section{Remarks to Sections 3--5}
\label{section:rem-to-sec-3-5}

Using the facts and the modified definitions from
Sections~\ref{section:inv-background}
and~\ref{section:tracering-background}, and the comments in
Section~\ref{rem:section2} regarding Section~2, the content of
Sections 3---5 (definitions, results, and proofs) remain true in prime
characteristic, with the possible exception of Remark~3.2.  In
particular, the following results remain true in prime characteristic:
the weak and the strong Nullstellensatz for prime ideals
(Propositions~5.1 and~5.3), and the inclusion-reversing bijections
between the irreducible $n$-varieties in $\Umn$ and $\Spec_n(\Gmn)$
(resp., $\Spec_n(\Tmn)$) (Theorem~5.7).

Only a few items require further comment.  We begin with a lemma which
is likely known.

\begin{lem}\label{lem:n-var:gen-free}
  Also in prime characteristic, $n$-varieties are generically free as
  $\PGLn$-varieties.
\end{lem}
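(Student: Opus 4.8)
The plan is to reduce the statement to the generic situation of the full matrix tuples, where a stabilizer computation is already available. Recall that an $n$-variety $X$ is, by definition, a $\PGLn$-stable locally closed subset of $\Mnm$ for some $m$; in particular $X$ is a $\PGLn$-variety in the usual sense, and ``generically free'' means that there is a dense open $\PGLn$-stable subset of $X$ on which $\PGLn$ acts freely, i.e. with trivial scheme-theoretic stabilizer at every point. The key fact supplied earlier in the excerpt is \eqref{eq:n-var-triv-stab}: on the open set $\Umn\subseteq\Mnm$ of tuples generating $\Mn$ as a $k$-algebra, the group-theoretic stabilizer of every point is trivial. So the heart of the matter is (i) to pass from a generating $n$-variety $X\subseteq\Mnm$ to the subset $X\cap\Umn$, showing it is dense and open in $X$, and (ii) to upgrade ``trivial group-theoretic stabilizer'' to ``trivial scheme-theoretic stabilizer'' in arbitrary characteristic, which is the one place where the characteristic-zero argument (where the two notions of stabilizer coincide for reductive groups acting with reduced stabilizers) does not transfer verbatim.

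For step (i): by the structure theory of $n$-varieties recalled from \cite{nvar} (Section~5, the bijections between irreducible $n$-varieties in $\Umn$ and $\Spec_n$), every $n$-variety is, up to the $\PGLn$-equivariant constructions used there, contained in $\Umn$; more directly, the defining property of an $n$-variety is that its image generates a prime PI-algebra of PI-degree exactly~$n$, so the generic point lies in $\Umn$. Hence $X\cap\Umn$ is a nonempty $\PGLn$-stable open subset of $X$; since $X$ is irreducible (the interesting case; the reducible case follows componentwise, or one notes generic freeness is checked on a dense open set regardless), $X\cap\Umn$ is dense. It therefore suffices to prove that $\PGLn$ acts freely on $\Umn$, and then restrict.

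For step (ii): fix $x\in\Umn$ and let $\Stab(x)$ denote the scheme-theoretic stabilizer, a closed subgroup scheme of $\PGLn$. Its group of $k$-points is trivial by \eqref{eq:n-var-triv-stab}, so it remains to show $\Stab(x)$ is reduced, equivalently that its Lie algebra vanishes. An element of $\Lie\Stab(x)\subseteq\pgl_n$ is represented by a matrix $\xi\in\gl_n$ (modulo scalars) such that the infinitesimal action kills $x$, i.e. $[\xi,a_i]=0$ for $i=1,\dots,m$ where $x=(a_1,\dots,a_m)$. But $a_1,\dots,a_m$ generate $\Mn$ as a $k$-algebra, so $\xi$ centralizes all of $\Mn$, forcing $\xi\in kI_n$, i.e. $\xi=0$ in $\pgl_n$. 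Thus $\Lie\Stab(x)=0$, and since $\Stab(x)$ is a group scheme of finite type over a field with trivial Lie algebra and trivial $k$-points, it is the trivial group scheme. Therefore $\PGLn$ acts freely on $\Umn$, hence generically freely on every $n$-variety.

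The main obstacle is precisely the infinitesimal/scheme-theoretic refinement in step (ii): in prime characteristic one must rule out an infinitesimal stabilizer, which is exactly where \eqref{eq:n-var-triv-stab} alone is insufficient and the generation property $x\in\Umn$ must be used a second time, now at the level of the centralizer in $\gl_n$ rather than in $\GL_n$. Everything else — density of $X\cap\Umn$, reduction to the irreducible case, and the bookkeeping identifying $\Lie\PGLn$ with $\pgl_n=\gl_n/kI_n$ and its action with the adjoint (bracket) action on matrix tuples — is routine and parallels the characteristic-zero treatment in \cite{nvar}. It is plausible that one can instead cite a general statement about free actions on tuples generating the endomorphism algebra; the remark ``which is likely known'' in the lemma suggests the author will either give the short Lie-algebra argument above or point to such a reference.
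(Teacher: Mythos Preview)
Your proposal is correct and follows essentially the same approach as the paper: both reduce to showing that the scheme-theoretic stabilizer of a point $x\in\Umn$ has trivial Lie algebra, and both prove this via the commutator computation $[\xi,a_i]=0$ for all $i$ forces $\xi$ scalar. Two minor differences: your step~(i) is superfluous since $n$-varieties lie in $\Umn$ by definition (so no density argument is needed), and where you argue directly that a finite-type group scheme with trivial Lie algebra and trivial $k$-points is trivial, the paper instead cites \cite[Lemma~4.2($i'$)]{berhuy:favi} for the reduction to the Lie-algebra criterion.
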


In characteristic zero, the action of a linear algebraic group $G$ on
a $G$-variety $X$ is called {\em generically free} if there is a dense
open subset $U$ of $X$ such that for any $x\in U$ the group-theoretic
stabilizer $G_x$ is trivial.  If $G=\PGLn$ and $X$ is an $n$-variety
in $\Umn$, then $G_x$ is trivial for all $x\in X$, in arbitrary
characteristic, see \eqref{eq:n-var-triv-stab}.  So the $\PGLn$-action
on an $n$-variety is generically free in characteristic zero.  In
prime characteristic, one has to check an additional property in order
to ensure that the $\PGLn$-action on the $n$-variety $X$ is
generically free (see \cite[Section 4]{berhuy:favi} for the precise
definition and relevant results).  We will not be using
Lemma~\ref{lem:n-var:gen-free} in the sequel, except to note that the
category ${\mathcal E}_n$ is a subcategory of $\Bir_n$, as already
remarked in the introduction.  Therefore we will only sketch the
proof. We are grateful to Zinovy Reichstein for showing it to us.

\begin{proof}[Outline of the Proof]
  By \cite[Lemma 4.2($i'$)]{berhuy:favi}, it suffices to show that for
  any closed point $x\in X$, the Lie algebra of the scheme-theoretic
  stabilizer, which we will denote by $\tilde G_x$, is trivial.  We
  may assume $x\in X(k)\subseteq\Mnm$ (if not, replace $k$ by the
  algebraic closure of the function field of the irreducible
  subvariety of $X$ corresponding to~$x$).  If $f\colon G \to X$ is
  given by $f(g)=g\cdot x=gxg^{-1}$, then $\Lie(\tilde G_x)$ is the
  kernel of the derivative $df_1$.

  The derivative of the conjugation representation of $\GL_n$ on
  $\M_n=\gl_n$ is the adjoint representation of $\gl_n$ on itself,
  which takes $A \in \gl_n$ to $\ad(A)\colon X \mapsto [A, X]$.  Since
  the center of $\GL_n$ acts trivially on $\Mn$, the conjugation
  action of $\GL_n$ on $\Mn$ descends to $\PGLn$, and the Lie
  derivative for this $\PGLn$-action is still $\ad(A)\colon x \mapsto
  [A, x]$, except that we are now viewing $A$ as an element of $\pgl_n
  = \gl_n/kI_n$.  Similarly, for the action of $\PGLn$ on $\Mnm$ by
  simultaneous conjugation, the Lie action of $A\in\pgl_n$ on $\Mnm$
  is given by $x=(x_1,\ldots,x_m) \mapsto ([A, x_1],\ldots, [A,
  x_m])$.  For a fixed $x\in\Umn$ the kernel of this map is
  $\Lie(\tilde G_x)$; it is trivial, since the only matrices in $\Mn$
  commuting with all $x_i$ are the scalar matrices.  So also in prime
  characteristic, $n$-varieties are generically free as
  $\PGLn$-varieties.
\end{proof}

{\bf Remark 3.2:} The first two sentences are true in prime
characteristic.  The last may not be --- the status of Proposition~7.3
is not known in prime characteristic (see the introduction to
Section~\ref{section:first-extension-S7} and
Proposition~\ref{prop:A}).

{\bf Example 3.3} works in characteristic $\neq 2$ since the proof of
\cite[Theorem~2.9]{friedland} goes through under this assumption,
showing that $U_{2,2}$ is defined by the non-vanishing of
$c=c(X_1,X_2)$.  (Note the typo in the definition of $c(X_1,X_2)$: the
lone ``det'' should be ``tr''.)  Now if $f$ were in $T_{2,2}$, then
$c$ would be an invertible element of $T_{2,2}$.  Hence for any
$a\in(\M_2)^2$, $\phi_a(c)=c(a)\neq0$, contradicting that $c(a)=0$ for
$a\not\in U_{2,2}$.

{\bf Remark 3.4:} This goes through.  The proof in part (b) uses the
following: If $\phi$ is a surjective $k$-algebra homomorphism
$\Tmn\to\Mn$, then
\begin{equation}
  \label{eq:remark3.4}
  \phi(p(X_1,\ldots,X_m))=p(\phi(X_1),\ldots,\phi(X_m))
\end{equation}
for all $p\in\Tmn$.
It suffices to verify this for the $k$-algebra generators of $\Tmn$.
It is clear for $p\in\Gmn$.  For $p=c_s(h)$ with $h\in\Gmn$ it follows
from the results summarized in
Section~\ref{section:tracering-background}: By~\eqref{eq:amitsur},
$T(\Tmn)=\Tmn$, and clearly $T(\Mn)=\Mn$.  Hence by
Theorem~\ref{thm:amitsur-small}, $\phi$ preserves characteristic
polynomials, so that \eqref{eq:remark3.4} holds for $p=c_s(h)$ with
$h\in\Gmn$.

{\bf Lemma 3.6:} The proof of (a) uses \cite[Theorem~1]{schelter:78},
which does not need characteristic zero.  In (b), as in many other
instances in later sections, one has to use results summarized in
Section~\ref{section:inv-background}.  In this particular instance one
needs that also in prime characteristic, $C$ and $\bar X$ can be
separated by a function $f\in \Cmn$, and that $f$ can be thought of as
a central element of $\Tmn$.

{\bf Corollary 5.2:} Note that the proof shows that for every
irreducible $n$-variety~$X$, $\calI_T(X)$ is a prime ideal of $\Tmn$
of PI-degree~$n$.  The same argument shows that $\calI(X)$ is a prime
ideal of $\Gmn$ of PI-degree~$n$.

\section{Remarks to Section 6}
\label{section:rem-sec-6}

As mentioned in the introduction, Theorem~1.1 is incorrect as stated,
but it can be easily corrected, and the corrected version,
Theorem~\ref{thm1prime}, is also valid in prime characteristic.  The
only other results in~\cite{nvar} that need to be corrected are
Remark~6.2 and Lemma~6.3.  With these changes, all
results of Section~6 are also valid in prime characteristic.

The problem stems from the definition of $\alpha_*$ in
Definition~6.1(d).  Let $X \subset U_{m, n}$ and $Y \subset U_{l, n}$
be $n$-varieties, and let
$\alpha\colon \knY\to \knX$
be a $k$-algebra homomorphism.  Denote the generic matrices generating
$G_{l,n}$ by $X_1$, \ldots, $X_l$, and their images in $\knY$ by
$\overline{X_1}$, \ldots, $\overline{X_l}$.  Set
$f_i=\alpha(\overline{X_i})$.  We define a regular map
\[\alpha_\#\colon X\to\Mnl\]
by $\alpha_\#(x)=(f_1(x),\ldots,f_l(x))$.  If $\alpha_\#(X)\subseteq
Y$, then $\alpha_*(x)=\alpha_\#(x)$ defines a regular map of
$n$-varieties 
\[\alpha_*\colon X\to Y\,.\]
But $\alpha_\#(X)\subseteq Y$ may not be true.  One easily checks that
the image of $\alpha_\#$ is contained in the set~$\calZ_0$ of zeroes
of $\calI(Y)$ in $(\Mn)^l$ (cf.~\eqref{lem:first-ext-of-def-7.5:eqn}),
but it is not necessarily contained in $U_{l,n}$, and thus not
necessarily contained in $Y=\calZ(\calI(Y))=\calZ_0\cap U_{l,n}$:

\begin{lem}\label{lem:condition-on-alpha}
  The following are equivalent:
  \begin{enumerate}
   \item[\upshape(a)] $\alpha_\#(X)\subseteq Y$
   \item[\upshape(b)] For every $k$-algebra surjection
     $\phi\colon\knX\to\Mn$, the composition $\phi\circ\alpha$ is also
     surjective.  {\upshape(}We will refer to this property as
     ``$\alpha$ preserves surjections onto $\Mn$''.{\upshape)}
  \end{enumerate}
\end{lem}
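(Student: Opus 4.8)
The plan is to translate both conditions into the single statement that $\alpha_\#(x)\in\Uln$ for every $x\in X$, via the standard correspondence between points of an $n$-variety in $\Umn$ and $k$-algebra surjections of its coordinate ring onto $\Mn$. For $x=(x_1,\dots,x_m)\in X$, evaluation at $x$ is a $k$-algebra homomorphism $\phi_x\colon\knX\to\Mn$ with $\phi_x(\overline{X_i})=x_i$, and it is surjective because $x\in\Umn$ forces $x_1,\dots,x_m$ — the images of the generators $\overline{X_i}$ of $\knX=\Gmn/\calI(X)$ — to generate $\Mn$. Conversely, any $k$-algebra surjection $\phi\colon\knX\to\Mn$ lifts along $\Gmn\to\knX$ to a surjection $\tilde\phi\colon\Gmn\to\Mn$; writing $a_i=\tilde\phi(X_i)$ one has $\tilde\phi(p)=p(a)$ for all $p\in\Gmn$, so $a=(a_1,\dots,a_m)\in\Umn$, and since $\calI(X)\subseteq\ker\tilde\phi$ the point $a$ lies in the zero set of $\calI(X)$ in $\Mnm$. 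As an $n$-variety in $\Umn$ is recovered as this zero set intersected with $\Umn$ (the Nullstellensatz; cf.\ Theorem~5.7 and the analogous identity $Y=\calZ_0\cap\Uln$ recalled before the lemma), we get $a\in X$ and $\phi=\phi_a$. Hence $x\mapsto\phi_x$ is a bijection from $X$ onto the set of all $k$-algebra surjections $\knX\to\Mn$.

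The second step is to compute, for $x\in X$, the image of $\phi_x\circ\alpha\colon\knY\to\Mn$. Since $f_i=\alpha(\overline{X_i})$ and the $\overline{X_i}$ generate $\knY=G_{l,n}/\calI(Y)$, this image is the $k$-subalgebra of $\Mn$ generated by $\phi_x(f_1)=f_1(x),\dots,\phi_x(f_l)=f_l(x)$; thus $\phi_x\circ\alpha$ is surjective exactly when $f_1(x),\dots,f_l(x)$ generate $\Mn$, i.e.\ exactly when $\alpha_\#(x)=(f_1(x),\dots,f_l(x))\in\Uln$. On the other hand $\alpha_\#(X)$ always lies in the zero set $\calZ_0\subseteq\Mnl$ of $\calI(Y)$ (cf.~\eqref{lem:first-ext-of-def-7.5:eqn}), and $Y=\calZ_0\cap\Uln$; so $\alpha_\#(x)\in Y$ if and only if $\alpha_\#(x)\in\Uln$, and by the preceding sentence this in turn holds if and only if $\phi_x\circ\alpha$ is surjective.

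Putting the two steps together completes the proof: condition (a) says $\alpha_\#(x)\in Y$ for all $x\in X$, which by the second step is equivalent to requiring $\phi_x\circ\alpha$ to be surjective for all $x\in X$; and by the first step $\{\phi_x:x\in X\}$ is precisely the set of all $k$-algebra surjections $\knX\to\Mn$, so this requirement is exactly condition (b). The one substantial ingredient is the surjectivity half of the correspondence in the first step — that every $k$-algebra surjection $\knX\to\Mn$ is evaluation at a point of $X$ — which is where the Nullstellensatz of Section~5 is used; the remainder is bookkeeping, chiefly the elementary fact that a ring homomorphism carries generators to generators of its image, together with the descriptions $\knY=G_{l,n}/\calI(Y)$ and $Y=\calZ_0\cap\Uln$.
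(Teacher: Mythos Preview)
Your proof is correct and follows essentially the same route as the paper's: both reduce the question, for each $x\in X$, to the chain $\alpha_\#(x)\in Y \iff \alpha_\#(x)\in\Uln \iff f_1(x),\dots,f_l(x)$ generate $\Mn \iff \phi_x\circ\alpha$ is onto, and then invoke the correspondence between points of $X$ and surjections $\knX\to\Mn$. The only difference is cosmetic: the paper cites Remark~3.4 for that correspondence, while you spell it out and attribute the key step to the Nullstellensatz (Theorem~5.7), which is indeed what underlies Remark~3.4.
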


Note that (b) is true if $\alpha$ is an isomorphism (or a surjection),
and false if the image of $\alpha$ has PI-degree $<n$ (in which case
the proof shows that $\alpha_\#(X)\cap Y=\emptyset$).  One can
construct examples for which $\emptyset\neq\alpha_\#(X)\cap
Y\subsetneq \alpha_\#(X)$.  See also Example~\ref{ex:alpha-not-in-PIn}
below. 

\begin{proof}
  Recall that for $x\in X$, the representation
  $\phi_x\colon\Gmn\to\Mn$ given by $p\mapsto p(x)$ factors through a
  map $\bar\phi_x$ from $k_n[X]=\Gmn/\calI(X)$ onto $\Mn$.
  Conversely, any $k$-algebra surjection $\knX\to\Mn$ is of the form
  $\bar\phi_x$ for some $x\in X$, cf.~Remark~3.4.

  Note that the image of $\alpha$ is generated as $k$-algebra by the
  elements $f_i=\alpha(\overline{X_i})$.  Let $x\in X$.  By
  definition, $\alpha_\#(x)=(f_1(x),\ldots,f_l(x))$.  Note that
  $f_i(x)=(\bar\phi_x\circ\alpha)(\overline{X_i})$.  Now
  $\alpha_\#(x)\in Y$ iff $\alpha_\#(x)\in U_{l,n}$ iff $f_1(x)$,
  \ldots, $f_l(x)$ generate $\Mn$ iff $\bar\phi_x\circ\alpha$ is onto.
  Hence $\alpha_\#(X)\subseteq Y$ iff $\phi\circ\alpha$ is onto for
  all $k$-algebra surjections $\phi\colon\knX\to\Mn$.
\end{proof}

Consequently, Definition~6.1(d) needs to be modified to require that
$\alpha$ preserves surjections onto~$\Mn$ (parts (a)---(c) go through
without change):

\begin{defn}
  Let $X \subset U_{m, n}$ and $Y \subset U_{l, n}$ be $n$-varieties,
  and let $\alpha\colon \knY\to \knX$ be a $k$-algebra homomorphism
  such that for all surjective $k$-algebra maps
  $\phi\colon\knX\to\Mn$, also $\phi\circ\alpha$ is surjective.  Then
  $\alpha_\#(X)\subseteq Y$, and $\alpha_*\colon X\to Y$ is the
  regular map of $n$-varieties given by $\alpha_*(x)=\alpha_\#(x)$.
  It is easy to check that for every $g\in\knY$,
  $\alpha(g)=g\circ\alpha_*\colon X\to Y\to\Mn$.
\end{defn}

Also Remark~6.2 and Lemma~6.3 need to be modified to require that
$\alpha$ and $\beta$ preserve surjections onto $\Mn$.  With these
modifications, the proof of Theorem~1.1 in \cite{nvar} proves
Theorem~\ref{thm1prime}.  For the convenience of the reader we present
the details.

\begin{remark} \label{rem.**}
  Let $f\colon X\to Y$ be a regular map of $n$-varieties. Then for
  $x\in X$, $(f^*)_\#(x)=f(x)$, so that $(f^*)_\#(X)=f(X)\subseteq Y$.
  Hence the $k$-algebra homomorphism $f^*\colon \knY\to\knX$ preserves
  surjections onto $\Mn$.  Thus $(f^*)_*$ is defined, and $(f^*)_*=f$.
  Moreover, if $\alpha\colon \knY\to\knX$ is a $k$-algebra
  homomorphism that preserves surjections onto $\Mn$, then
  $(\alpha_*)^*=\alpha$.  Note also that $(\id_X)^*=\id_{\knX}$ and
  $(\id_{\knX})_*=\id_X$.
\end{remark}

\begin{lem} \label{lem:6.3new}
  Let $X$, $Y$ and $Z$ be $n$-varieties.
  \begin{enumerate}
  \item[\rm(a)]If $f \colon X \to Y$ and $g \colon Y \to Z$ are
    regular maps of $n$-varieties, then $(g \circ f)^* = f^* \circ g^*$.
  \item[\rm(b)]If $\alpha \colon k_n[Y] \to k_n[X]$ and $\beta \colon
    k_n[Z] \to k_n[Y]$ are $k$-algebra homomorphisms that preserve
    surjections onto $\Mn$, then also $\alpha\circ\beta$ preserves
    surjections onto $\Mn$, and $(\alpha \circ \beta)_* = \beta_*
    \circ \alpha_*$.
  \item[\rm(c)]$X$ and $Y$ are isomorphic as $n$-varieties if and only
    if $k_n[X]$ and $k_n[Y]$ are isomorphic as $k$-algebras.
  \end{enumerate}
\end{lem}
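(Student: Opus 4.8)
The plan is to prove the three parts of Lemma~\ref{lem:6.3new} by leaning on Remark~\ref{rem.**} and the ``$*$--$\#$'' dictionary it establishes between regular maps and $k$-algebra homomorphisms that preserve surjections onto $\Mn$. For part~(a), the key observation is that $*$ is just precomposition of matrix-valued functions: for $h\in k_n[Z]$ one has $(g\circ f)^*(h)=h\circ(g\circ f)=(h\circ g)\circ f=f^*(g^*(h))$, so this is essentially formal once one recalls that elements of $k_n[Z]$ are regular maps $Z\to\Mn$ and that composition of maps is associative. No characteristic hypotheses enter here.

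For part~(b), I would first check that $\alpha\circ\beta$ preserves surjections onto $\Mn$: if $\phi\colon k_n[X]\to\Mn$ is a surjective $k$-algebra map, then $\phi\circ\alpha$ is surjective because $\alpha$ has the property, and then $(\phi\circ\alpha)\circ\beta$ is surjective because $\beta$ has the property; hence $\phi\circ(\alpha\circ\beta)$ is surjective. So $(\alpha\circ\beta)_*$ is defined. To identify it with $\beta_*\circ\alpha_*$, I would use Lemma~\ref{lem:condition-on-alpha} and the explicit description of $\alpha_\#$: for $x\in X$, $\alpha_*(x)\in Y$ is characterized by $f_i(x)=(\bar\phi_x\circ\alpha)(\overline{X_i})$, i.e.\ the representation $\phi_{\alpha_*(x)}$ of $G_{l,n}$ equals $\bar\phi_x\circ\alpha$ (after passing to $k_n[Y]$). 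Applying this twice, the representation attached to $\beta_*(\alpha_*(x))$ is $\overline{\phi_{\alpha_*(x)}}\circ\beta=\bar\phi_x\circ\alpha\circ\beta$, which is exactly the representation attached to $(\alpha\circ\beta)_*(x)$. Since a point of $U_{l',n}$ is determined by its associated (surjective) representation, the two maps agree. Alternatively, and more cleanly, one can apply $*$ to both sides and use Remark~\ref{rem.**}: $\bigl((\alpha\circ\beta)_*\bigr)^*=\alpha\circ\beta$ by Remark~\ref{rem.**}, while $(\beta_*\circ\alpha_*)^*=(\alpha_*)^*\circ(\beta_*)^*=\alpha\circ\beta$ by part~(a) and Remark~\ref{rem.**}; since $*$ is injective on regular maps (again by Remark~\ref{rem.**}, which gives $((\ )_*)^*=\operatorname{id}$ and $((\ )^*)_*=\operatorname{id}$), the two composites coincide. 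I would present this second argument, as it is shorter and reuses exactly what has just been proved.

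For part~(c), the ``only if'' direction follows from part~(a): an isomorphism $f\colon X\to Y$ with inverse $g$ gives $f^*\circ g^*=(g\circ f)^*=(\operatorname{id}_Y)^*=\operatorname{id}_{k_n[Y]}$ and similarly the other composite, so $f^*$ is a $k$-algebra isomorphism. For the ``if'' direction, suppose $\alpha\colon k_n[Y]\to k_n[X]$ is a $k$-algebra isomorphism. By the remark following Lemma~\ref{lem:condition-on-alpha}, both $\alpha$ and $\alpha^{-1}$ preserve surjections onto $\Mn$ (being isomorphisms), so $\alpha_*\colon X\to Y$ and $(\alpha^{-1})_*\colon Y\to X$ are both defined. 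By part~(b), $(\alpha^{-1})_*\circ\alpha_*=(\alpha\circ\alpha^{-1})_*=(\operatorname{id}_{k_n[Y]})_*=\operatorname{id}_Y$ and $\alpha_*\circ(\alpha^{-1})_*=\operatorname{id}_X$, using Remark~\ref{rem.**} for the identities; hence $\alpha_*$ is an isomorphism of $n$-varieties.

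I do not expect a serious obstacle: the lemma is essentially a bookkeeping consequence of Remark~\ref{rem.**} and Lemma~\ref{lem:condition-on-alpha}, and the correction (requiring preservation of surjections) has already been folded into those statements. The only point needing a little care is the verification in part~(b) that $\alpha\circ\beta$ preserves surjections onto $\Mn$ — this must be checked directly from the definition before the formula $(\alpha\circ\beta)_*=\beta_*\circ\alpha_*$ even makes sense — and, relatedly, making sure the functoriality identities $(\id)^*=\id$ and $(\id)_*=\id$ from Remark~\ref{rem.**} are invoked at the right spots so that the ``inverse'' arguments in parts~(b) and~(c) are airtight. None of this uses the characteristic of $k$, which is the whole point: the proof of Theorem~1.1 in~\cite{nvar}, once the morphism definition is corrected, goes through verbatim.
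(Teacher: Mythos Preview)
Your proposal is correct and matches the paper's approach exactly: the paper simply says that (a) and (b) ``follow directly from the definitions'' and that (c) is proved as in Lemma~6.3(c), and what you have written is precisely the unpacking of those definitions via Remark~\ref{rem.**}. One small bookkeeping slip to fix in part~(c): the subscripts on the identities are swapped---you should have $(\alpha^{-1})_*\circ\alpha_*=(\alpha\circ\alpha^{-1})_*=(\id_{k_n[X]})_*=\id_X$ and $\alpha_*\circ(\alpha^{-1})_*=\id_Y$ (and similarly $g\circ f=\id_X$, not $\id_Y$, in the ``only if'' direction).
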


\begin{proof}
  (a) and (b) follow directly from the definitions.  The proof of
  part~(c) is the same as for Lemma~6.3(c).
\end{proof}

{\bf Theorem 6.4} goes through without changes, in arbitrary
characteristic.  It states that every finitely generated prime
$k$-algebra of PI-degree~$n$ is isomorphic to $\knX$ for some
irreducible $n$-variety~$X$ (which is unique up to isomorphism of
$n$-varieties by Lemma~\ref{lem:6.3new}(c)).

\begin{proof}[Proof of Theorem~\ref{thm1prime}] %
  By Lemma~\ref{lem:6.3new}, the functor~$\mathcal F$ in
  Theorem~\ref{thm1prime} is contravariant.  It is full and faithful
  by Remark~\ref{rem.**}.  Moreover, by Theorem~6.4, every object in
  $\PI_n$ is isomorphic to the image of an object in $\Var_n$.  Hence
  $\mathcal F$ is a contravariant equivalence of categories between
  $\Var_n$ and $\PI_n$.
\end{proof}

{\bf Lemma 6.5} goes through without changes, in arbitrary
characteristic.

We conclude this section with a few remarks about the category
$\PI_n$. 

\begin{example}\label{ex:alpha-not-in-PIn}
  Let $t$ be an indeterminate and let $S=\M_2(k[t])$.  Let $R$ be the
  subalgebra of $S$ consisting of all matrices whose lower left entry
  is contained in $tk[t]$.  Then $R$ and $S$ are prime PI-algebras in
  $\PI_2$, but the inclusion map $\beta\colon R\to S$ is not a
  morphism in $\PI_2$: $\varphi\circ\beta$ is not onto for the
  surjection $\varphi\colon S\to \M_2$ defined by $t\mapsto 0$.

  Note that by Theorem~6.4, there are, for $n=2$, irreducible
  $n$-varieties $X$ and $Y$ such that $R\cong\knY$ and $S\cong \knX$.
  Then the $k$-algebra homomorphism $\alpha\colon\knY\to\knX$ induced
  by $\beta$ does not satisfy the two equivalent conditions of
  Lemma~\ref{lem:condition-on-alpha}.
\end{example}

\begin{remark}\label{rem:alpha-in-PIn}
  Here is an alternate description of the morphisms in the category
  $\PI_n$ (and of the $k$-algebra homomorphisms~$\alpha$ satisfying
  the equivalent conditions of Lemma~\ref{lem:condition-on-alpha}).
  For a PI-algebra~$R$ in $\PI_n$, denote by $\Max_n(R)$ the maximal
  ideals of PI-degree~$n$ of $R$, i.e., the ideals $I$ such that
  $R/I\simeq\Mn$.  Now let $R$ and $S$ be PI-algebras in $\PI_n$, and
  let $\alpha\colon R\to S$ be an arbitrary $k$-algebra homomorphism.
  Then $\alpha$ is a morphism in $\PI_n$ iff $\alpha$ induces a map
  $\Max_n(S)\to\Max_n(R)$, i.e., iff for every $I\in\Max_n(S)$,
  $\alpha\inv(I)\in\Max_n(R)$.  Note that in general, $\alpha\inv(I)$
  need not even be a prime ideal of $R$ (e.g., for $I=\M_2(tk[t])$ in
  Example~\ref{ex:alpha-not-in-PIn}).
\end{remark}

\section{The First Extension of Definition~7.5}
\label{section:first-extension-S7}

It is not known if Proposition~7.3 is true in prime characteristic.
Because of this, there are two (possibly nonequivalent) ways to extend
Definition~7.5 to prime characteristic, which are explored in this and
the following section.  The first yields a category equivalence
resembling Theorem~1.2 (Theorem~\ref{thm:B}), the second only a full
and faithful functor (Theorem~\ref{thm:Dnew}).

{\bf Definition 7.1} goes through in prime characteristic:  For an
irreducible $n$-variety~$X$,  $k_n(X)$ is defined to be the total ring
of fractions of $\knX$.  It is a central simple algebra of degree~$n$,
called the {\em central simple algebra of rational functions on~$X$}.

{\bf Remark 7.2} and the subsequent discussion go through in prime
characteristic.

{\bf Proposition 7.3} may or may not go through (cf.\ 
Proposition~\ref{prop:A} below).

{\bf Definition 7.5} goes through in prime characteristic if one
strikes the second sentence of part~(a); the other parts go through
unchanged.  Because of its importance, we restate it with this one
change:

\begin{defn}\label{defn:7.5new}
  Let $X \subset U_{m, n}$ and $Y \subset U_{l, n}$ be irreducible
  $n$-varieties.
  \begin{enumerate}
  \item[(a)]A rational map $f \colon X \dasharrow Y$ is called a {\em
      rational map of $n$-varieties} if $f = (f_1,\ldots,f_l)$ where
    each $f_i\in k_n(X)$.

  \item[(b)]The $n$-varieties $X$ and $Y$ are called {\em birationally
      isomorphic} or {\em birationally equivalent} if there exist
    dominant rational maps of $n$-varieties $f \colon X \dasharrow Y$
    and $g \colon Y \dasharrow X$ such that $f \circ g = \id_Y$ and $g
    \circ f = \id_X$ (as rational maps of varieties).

  \item[(c)]A dominant rational map $f=(f_1,\ldots,f_l) \colon X
    \dasharrow Y$ of $n$-varieties induces a $k$-algebra homomorphism
    (i.e., an embedding) $f^* \colon k_n(Y) \lra k_n(X)$ of central
    simple algebras defined by $f^*(\overline{X_i})=f_i$, where
    $\overline{X_i}$ is the image of the generic matrix $X_i\in
    G_{l,n}$ in $k_n[Y] \subset k_n(Y)$.  One easily verifies that for
    every $g\in k_n(Y)$, $f^*(g)=g\circ f$, if one views $g$ as a
    $\PGLn$-equivariant rational map $Y\dasharrow\Mn$.

  \item[(d)]Conversely, a $k$-algebra homomorphism (necessarily an
    embedding) of central simple algebras $\alpha \colon k_n(Y) \lra
    k_n(X)$ (over $k$) induces a dominant rational map $f = \alpha_*
    \colon X \dasharrow Y$ of $n$-varieties.  This map is given by $f
    = (f_1, \dots, f_l)$ with $f_i = \alpha(\overline{X_i}) \in
    k_n(X)$, where $\overline{X_1}, \dots, \overline{X_l}$ are the
    images of the generic matrices $X_1, \dots, X_l \in G_{l, n}$.  It
    is easy to check that for every $g\in k_n(Y)$,
    $\alpha(g)=g\circ\alpha_*$, if one views $g$ as a
    $\PGLn$-equivariant rational map $Y\dasharrow\Mn$.
  \end{enumerate}
\end{defn}

Note that a rational map of $n$-varieties $f\colon X\dasharrow Y$ is
still a $\PGLn$-equivariant rational map of algebraic varieties, but
in prime characteristic maybe not vice versa.  The map $\alpha_*$
defined in part~(d), which is a priori a rational map $X\dasharrow
(\Mn)^l$, is indeed (in arbitrary characteristic) a dominant rational
map from $X$ to~$Y$:

\begin{lem}\label{lem:first-ext-of-def-7.5}
  Let $X \subset U_{m, n}$ and $Y \subset U_{l, n}$ be irreducible
  $n$-varieties.  Given a nonzero $k$-algebra homomorphism
  $\alpha\colon k_n(Y)\to k_n(X)$, the map $\alpha_*\colon
  X\dasharrow(\Mn)^l$ induces a dominant rational map of $n$-varieties
  $X\dasharrow Y$ {\upshape(}again denoted by $\alpha_*${\upshape)}.
\end{lem}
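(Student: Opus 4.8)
The plan is to show that $\alpha_*$ is well-defined as a rational map into $Y$ (i.e. that its image lands generically in $Y = \calZ(\calI(Y))$), that it is a rational map of $n$-varieties in the sense of Definition~\ref{defn:7.5new}(a), and that it is dominant. The starting point is that $k_n(Y)$ is the total ring of fractions of $\knY$, which is a central simple algebra of degree~$n$; since $\alpha$ is a nonzero $k$-algebra homomorphism of a simple algebra, it is automatically injective. Writing $f_i = \alpha(\overline{X_i}) \in k_n(X)$, the tuple $f = (f_1,\ldots,f_l)$ defines a rational map $X\dasharrow(\Mn)^l$, and by construction each component lies in $k_n(X)$, so condition (a) of Definition~\ref{defn:7.5new} holds once we know the image lies generically in $Y$.

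First I would check that the image of $\alpha_*$ is contained in the zero set $\calZ_0$ of $\calI(Y)$ in $(\Mn)^l$. For a generic point $x\in X$ at which all the $f_i$ are defined, the assignment $\overline{X_i}\mapsto f_i(x)$ is the composite of $\alpha$ with the evaluation map $k_n(X)\dasharrow\Mn$ at~$x$ (restricted appropriately to the relevant local ring); hence any $p\in\calI(Y)\subseteq G_{l,n}$ satisfies $p(f_1(x),\ldots,f_l(x)) = (\text{evaluation at }x)(\alpha(\bar p)) = 0$ because $\bar p = 0$ in $\knY\subseteq k_n(Y)$. So $\alpha_*(x)\in\calZ_0$ for generic~$x$. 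The remaining and main point is to upgrade ``$\calZ_0$'' to ``$Y = \calZ_0\cap U_{l,n}$'', i.e. to show that for generic $x$ the matrices $f_1(x),\ldots,f_l(x)$ actually generate $\Mn$ as a $k$-algebra. This is where the hypothesis that $\alpha$ is a homomorphism of degree-$n$ central simple algebras (not merely an algebra map whose image might have smaller PI-degree) is essential: since $\alpha$ is injective, its image is a central simple algebra of degree~$n$, so $\knY$ is a prime PI-algebra of PI-degree~$n$ mapping isomorphically onto $\alpha(\knY)\subseteq k_n(X)$, and the subalgebra of $k_n(X)$ generated by the $f_i$ equals $\alpha(\knY)$, still of PI-degree~$n$.

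To turn PI-degree~$n$ of $\alpha(\knY)$ into the generic surjectivity of $x\mapsto(f_1(x),\ldots,f_l(x))$ onto $\Mn$, I would argue as follows. Consider the $k$-subalgebra $A = k\{f_1,\ldots,f_l\}$ of $k_n(X)$; it is a prime, finitely generated PI-algebra of PI-degree~$n$, and it is contained in $k_n[X']$ for a suitable dense open $X'\subseteq X$ (clear denominators so that all $f_i$ are regular on $X'$). By the theory of Sections~3--5 (which carries over to prime characteristic, see Section~\ref{section:rem-to-sec-3-5}), a finitely generated prime PI-algebra of PI-degree~$n$ admits a surjection onto $\Mn$; more precisely, evaluation at a generic closed point of $X'$ gives a surjection $\bar\phi_x\colon k_n[X']\to\Mn$, and one shows that the locus where the composite $A\hookrightarrow k_n[X']\xrightarrow{\bar\phi_x}\Mn$ fails to be surjective is a proper closed subset of $X'$ — for instance because non-surjectivity at $x$ means all $n^2\times n^2$ "multiplication-table" minors built from the $f_i(x)$ vanish, an algebraic condition, and it cannot hold identically since $A$ has PI-degree~$n$ so the $n^2$-th Capelli-type polynomial is not identically zero on $A$, hence not on $\alpha(\knY)$, hence some evaluation $\bar\phi_x\circ\alpha$ is onto. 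Therefore for generic $x$, the $f_i(x)$ generate $\Mn$, i.e. $\alpha_*(x)\in U_{l,n}$, so $\alpha_*(x)\in\calZ_0\cap U_{l,n} = Y$.

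Finally, dominance: let $Y_0\subseteq Y$ be the Zariski closure of the image of $\alpha_*$; it is an irreducible $\PGLn$-stable closed subset of $Y$. If $\alpha_*$ were not dominant, $Y_0$ would be a proper $n$-subvariety of $Y$, and the surjection $\knY\twoheadrightarrow k_n[Y_0]$ would have a nonzero kernel $J$. But every $p\in J$ vanishes on $Y_0$, hence (pulling back along $\alpha_*$) $\alpha(\bar p)$ vanishes on $X'$, forcing $\alpha(\bar p) = 0$ in $k_n(X)$ and hence $\bar p = 0$ by injectivity of $\alpha$ — contradiction. So $\overline{\alpha_*(X)} = Y$ and $\alpha_*\colon X\dasharrow Y$ is a dominant rational map of $n$-varieties. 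The step I expect to be the genuine obstacle is the second one — converting PI-degree~$n$ of the image algebra into generic surjectivity onto $\Mn$ — since in prime characteristic one cannot invoke Proposition~7.3, and must instead argue directly via the generic-point/density arguments of Sections~3--5 together with the fact that a nonzero map of degree-$n$ central simple algebras is injective.
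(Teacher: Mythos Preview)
Your proof is correct, and both the opening step (showing $\alpha_*(x)\in\calZ_0$ via the identity $p(f(x))=\alpha(\bar p)(x)$) and the dominance argument (comparing $\calI(Y)$ with the ideal of the closure of the image and invoking injectivity of $\alpha$) coincide with the paper's.  The genuine divergence is in the middle step, showing that $\alpha_*(x)\in U_{l,n}$ for generic~$x$.

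The paper does not argue via the PI-degree of the image algebra.  Instead it chooses a nonzero central element $t\in\knY$ such that the localization $A=\knY[t^{-1}]$ is Azumaya of degree~$n$, writes $\alpha(t)=s_1s_2^{-1}$ with $s_1,s_2$ central in $\knX$, and takes $V$ to be the open set where both $s_i$ are nonzero.  For $x\in V$ the map $\psi_x\colon\knY\to\Mn$, $\bar p\mapsto\alpha(\bar p)(x)$, sends $t$ to a nonzero scalar, hence extends to $A\to\Mn$; since $A$ is Azumaya of degree~$n$, this extension is automatically surjective.  Your route---observing that $A=\alpha(\knY)$ is prime of PI-degree~$n$, so the Capelli polynomial $c_{n^2}$ (equivalently, some $n^2\times n^2$ minor of monomials in the~$f_i$) does not vanish identically on $A$, hence is nonzero at a generic~$x$, forcing the $f_i(x)$ to span~$\Mn$---is a legitimate alternative.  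It is more elementary in that it avoids the Azumaya machinery, but as written it leaves implicit the fact that a proper $k$-subalgebra of $\Mn$ has PI-degree $<n$ (equivalently, that $c_{n^2}$ or $s_{2n-2}$ vanishes on it), which is what converts ``PI-degree~$n$'' into ``generically onto''.  The paper's Azumaya trick is slightly slicker and, more to the point, is reused verbatim (with a one-line modification) in the proof of Lemma~\ref{lem:second-definition}, where $X$ is merely a $\PGLn$-variety and one cannot write $\alpha(t)$ as a quotient of elements of $\knX$; your argument would also adapt there, but the Azumaya version makes the adaptation transparent.
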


\begin{proof}
  Denote the generic matrices generating $G_{l,n}$ by $X_1$, \ldots,
  $X_l$, and the image of $p\in G_{l,n}$ in $\knY$ by $\bar p$.  The
  map $f=\alpha_*\colon X\dasharrow (\Mn)^l$ is defined by
  $f=(f_1,\ldots,f_l)$ where $f_i=\alpha(\overline{X_i})$.  Let $U$ be
  the $\PGLn$-invariant nonempty open subset of $X$ on which
  $f$ is defined.  Let $x\in U$.  Then for $p=p(X_1,\ldots,
  X_l)\in G_{l,n}$,
  \begin{equation}\label{lem:first-ext-of-def-7.5:eqn}
  \begin{split}
  p(f(x))
  &=p\bigl(\alpha(\overline{X_1}\,),\ldots,\alpha(\overline{X_l}\,)\bigr)(x)\\
  &=\alpha\bigl(p(\overline{X_1},\ldots,\overline{X_l}\,)\bigr)(x)
  =\alpha(\bar p)(x)\,.
  \end{split}
  \end{equation}
  In particular, $\calI(Y)$ vanishes at $f(x)$, and $\alpha(\bar p)$
  is defined on $U$.  Consequently, $f(x)\in Y$ iff $f(x)\in U_{l,n}$
  iff the elements $f_i(x)=\alpha(\overline{X_i}\,)(x)$ generate $\Mn$
  iff the $k$-algebra homomorphism $\psi_x\colon \knY\to\Mn$ defined
  by $\psi_x(\bar p)=\alpha(\bar p)(x)$ is onto.

  {\sc Claim:} {\em There is a nonempty open subset $V$ of $U$ with
    $f(V)\subseteq Y$.}  To prove this, let $t$ be a nonzero central
  element of $\knY$ such that $A=\knY[t\inv]$ is an Azumaya algebra of
  PI-degree~$n$.  Then $\alpha(t)$ is a nonzero central element of
  $k_n(X)$, so of the form $\alpha(t)=s_1s_2\inv$ for nonzero central
  elements $s_1$ and $s_2$ of $\knX$.  Denote by $V$ the nonempty open
  subset of $U$ where both $s_1$ and $s_2$ are nonzero.  Let $x\in V$.
  Then by Lemma~6.5, $\psi_x(t)=\alpha(t)(x)=s_1(x)s_2(x)\inv$ is a
  nonzero scalar in $\Mn$.  Hence $\psi_x$ extends to a $k$-algebra
  homomorphism $A\to\Mn$ which has the same image as $\psi_x$, and
  which is onto since $A$ is Azumaya of PI-degree~$n$.  Thus
  $f(V)\subseteq Y$, proving the claim.

  The claim implies that we can think of $f$ as a rational map
  $X\dasharrow Y$; we now prove that it is dominant.  Since $f$ is
  $\PGLn$-equivariant, we can replace $V$ by a $\PGLn$-invariant
  nonempty open subset of $U$ (and thus of $X$) such that
  $f(V)\subseteq Y$.  Then the closure of $f(V)$ in $Y$ is
  $\PGLn$-invariant, so an $n$-variety $Y'\subseteq Y$.  Clearly
  $\calI(Y)\subseteq\calI(Y')$.  Let $p\in\calI(Y')$ and $x\in V$.
  Then by \eqref{lem:first-ext-of-def-7.5:eqn}, $0=p(f(x))=\alpha(\bar
  p)(x)$.  Since $V$ is dense in $X$, $\alpha(\bar p)=0$, so that
  $\bar p=0$, i.e., $p\in\calI(Y)$.  Hence $\calI(Y)=\calI(Y')$, so
  that $Y=Y'$ by Theorem~5.7.  This proves that the rational map
  $f\colon X\dasharrow Y$ is dominant.
\end{proof}

{\bf Remark 7.6}, Lemma 7.7, and Theorem 7.8 all go through in
arbitrary characteristic.  Remark~7.6 states that $(f^*)_*=f$,
$(\alpha_*)^*=\alpha$, ($\id_X)^*=\id_{k_n(X)}$, and
$(\id_{k_n(X)})_*=\id_X$.  Lemma~7.7 states, in particular, that
$(g\circ f)^*=f^*\circ g^*$ and
$(\alpha\circ\beta)_*=\beta_*\circ\alpha_*$.  Theorem~7.8 contains the
fact that every central simple algebra in $\CS_n$ is isomorphic to
$k_n(X)$ for some irreducible $n$-variety $X$ (which is unique up to
birational isomorphism of $n$-varieties).

\begin{proof}[Proof of Theorem~\ref{thm:B}]
  \newcommand{\calF}{\mathcal F}%
  Call the functor $\calF$.  Since $(g\circ f)^*=f^*\circ g^*$
  (Lemma~7.7(a)), $\calF$ is a contravariant functor.  Since
  $\alpha=(\alpha_*)^*$ and $(f^*)_*=f$ (Remark~7.6), $\calF$ is full
  and faithful.  By Theorem~7.8, every object in $\CS_n$ is isomorphic
  to the image of an object under $\calF$.  Hence $\calF$ is a
  contravariant category equivalence.
\end{proof}

\section{The Second Extension of Definition~7.5}
\label{section:second-extension-S7}

We can also extend Definition~7.5 by using $\PGLn$-equivariant
rational maps of varieties instead of the above defined ``rational
maps of $n$-varieties''.  This will allow us to prove
Theorem~\ref{thm:Dnew}.  First we need to describe the basic
relationship between $k_n(X)$ and $\RMaps_\PGLn(X,\Mn)$ for an
$n$-variety~$X$; this result is a partial replacement for
Proposition~7.3.  This relationship will be further investigated in
Section~\ref{section:RMaps}.

\begin{prop}\label{prop:A}
  Let $X$ be an irreducible $n$-variety.  There is a natural embedding
  \[\psi_X\colon k_n(X) \hookrightarrow \RMaps_\PGLn(X,\Mn)\,,\]%
  allowing us to identify $k_n(X)$ with a $k$-subalgebra of
  $\RMaps_\PGLn(X,\Mn)$.  The latter algebra is a central simple
  algebra of degree~$n$, whose center we can identify with
  $k(X)^\PGLn$.  Moreover, $k(X)^\PGLn$ is a finite, purely
  inseparable extension of the center of $k_n(X)$, and
  \[\RMaps_\PGLn(X,\Mn)=k_n(X)\cdot k(X)^\PGLn\,.\]%
\end{prop}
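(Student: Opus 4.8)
The plan is to build the embedding $\psi_X$ directly from the description of elements of $k_n(X)$ and $\RMaps_\PGLn(X,\Mn)$ as matrix-valued maps. Recall that $X\subseteq\Umn$ is an irreducible $n$-variety, so its PI-coordinate ring $\knX=\Gmn/\calI(X)$ is a prime PI-algebra of PI-degree~$n$ whose total ring of fractions is $k_n(X)$. An element $g\in\knX$ is, by construction, the $\PGLn$-equivariant regular map $X\to\Mn$ induced by a generic-matrix expression; inverting nonzero central elements of $\knX$ (which by Lemma~6.5 take nonzero scalar values on a dense open set) turns such a $g$ into a $\PGLn$-equivariant rational map $X\dasharrow\Mn$. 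This gives the map $\psi_X$; it is a $k$-algebra homomorphism because the identification of $g$ with $g\circ(\,\cdot\,)$ respects sums and products, and it is injective since $k_n(X)$ is simple and $\psi_X$ is nonzero. To see that the image actually lands in $\RMaps_\PGLn(X,\Mn)$, one checks equivariance on the dense open set where the relevant denominators are invertible, exactly as in the discussion following Definition~\ref{defn:7.5new}(c)--(d).

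Next I would identify the center of $k_n(X)$ and the center of $\RMaps_\PGLn(X,\Mn)$. The center of $k_n(X)$ is the field of fractions of the center $\Cmn/(\calI_T(X)\cap\Cmn)$ of the trace ring $\Tmn/\calI_T(X)$ of $\knX$ (using Corollary~5.2 as corrected in Section~\ref{section:rem-to-sec-3-5}, which gives that $\calI_T(X)\in\Spec_n(\Tmn)$); concretely it is the field generated by the $\PGLn$-invariant regular functions coming from $\Cmn$. On the other hand, the center of $\RMaps_\PGLn(X,\Mn)$, for $\RMaps$ of an irreducible $\PGLn$-variety, is the full field $k(X)^\PGLn$ of $\PGLn$-invariant rational functions on $X$: a central equivariant rational map $X\dasharrow\Mn$ must take scalar values on a dense open set (its value at a point must commute with a dense set of $l$-tuples generating $\Mn$, hence with all of $\Mn$), so is a scalar-valued invariant rational function, and conversely every such function gives a central element. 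Thus the inclusion of centers is precisely $Z(k_n(X))\hookrightarrow k(X)^\PGLn$, an inclusion of subfields of $k(X)$.

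Then I would prove that $k(X)^\PGLn$ is a finite, purely inseparable extension of $Z(k_n(X))$. Finiteness of the degree $[k(X):Z(k_n(X))]$ follows because $k(X)$ is the function field of $\pi^{-1}(\overline{\pi(X)})\cap\Umn$ (an irreducible variety birational to $X$ on which $\PGLn$ acts with trivial stabilizers by~\eqref{eq:n-var-triv-stab}), and $k(X)^\PGLn$ is squeezed between $Z(k_n(X))$ and $k(X)$, with $k(X)$ already finite over $k_n(X)$ and hence over $Z(k_n(X))$ (since $k_n(X)$ is a central simple algebra of degree~$n$ over its center). For pure inseparability: an element $u\in k(X)^\PGLn$ satisfies some $\PGLn$-invariant relation over $Z(k_n(X))$; the key point is that in characteristic zero Procesi's result (Proposition~7.3) forces $Z(k_n(X))=k(X)^\PGLn$ outright, while in prime characteristic the gap is controlled by Frobenius — one shows $u^{q}\in Z(k_n(X))$ for some power $q$ of the characteristic by observing that $u$ is a ratio of invariants in $\Cmn$ which, while perhaps not themselves Donkin generators, become polynomial in Donkin's generators after raising to a $p$-power (the Frobenius twist of any invariant of a ring of invariants in positive characteristic lies in the subring generated by a given full set of generators only up to such a twist — more carefully, one invokes the standard fact that $k[\Mnm]^{\PGLn}$ has a purely inseparable extension issue relative to the subring generated by any chosen generating set, but since Donkin's generators \emph{do} generate $\Cmn$, the subtlety is instead that $Z(k_n(X))$ may be a proper subfield of the fraction field of $\Cmn/(\calI_T(X)\cap\Cmn)$ only via the quotient, and this is handled by the corrected Corollary~5.2). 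Finally, $\RMaps_\PGLn(X,\Mn)=k_n(X)\cdot k(X)^\PGLn$ follows from a dimension count: the compositum is a central simple $k(X)^\PGLn$-subalgebra of $\RMaps_\PGLn(X,\Mn)$ of degree~$n$ (since $k_n(X)$ already has degree~$n$ over its center and we are base-changing to $k(X)^\PGLn$), and $\RMaps_\PGLn(X,\Mn)$ itself has degree~$n$ over the same center $k(X)^\PGLn$, so the inclusion of a degree-$n$ central simple algebra into another of the same degree over the same field is an equality. The main obstacle I expect is the pure-inseparability claim: pinning down exactly why $k(X)^{\PGLn}$ is purely inseparable (rather than merely algebraic) over $Z(k_n(X))$, which is where one must use the positive-characteristic structure of $\Cmn$ and the fact that $\Tmn$ is its own trace ring together with the Amitsur--Small theorem, and argue that any invariant rational function, after a suitable $p$-power, is expressible through the reduced characteristic polynomial coefficients that generate $Z(k_n(X))$.
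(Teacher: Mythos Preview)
Your outline handles the embedding $\psi_X$, the identification of the center of $\RMaps_\PGLn(X,\Mn)$ with $k(X)^\PGLn$, and the final equality $\RMaps_\PGLn(X,\Mn)=k_n(X)\cdot k(X)^\PGLn$ via a dimension count roughly as the paper does (the paper packages the middle step into Lemma~\ref{lem-for-Prop:A}, arguing that $\RMaps_\PGLn(X,\Mn)$ sits between a central simple algebra of degree~$n$ and $\Mn(k(X))$, hence is prime of PI-degree~$n$ with center contained in $k(X)\cdot I_n$; you should be more explicit that primeness and the PI-degree bound are needed before you can say the center consists of scalar maps). Your finiteness argument is garbled --- ``$k(X)$ finite over $k_n(X)$'' does not parse since $k_n(X)$ is noncommutative --- but this is a minor point, since the paper gets finiteness and pure inseparability in one stroke.

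The real gap is the pure-inseparability step, and you correctly flag it yourself as the obstacle. Your proposed mechanism, however, does not work. You suggest that an invariant rational function $u\in k(X)^\PGLn$ is a ratio of elements of $\Cmn$ which ``become polynomial in Donkin's generators after raising to a $p$-power''; but Donkin's generators already generate all of $\Cmn$, so there is no Frobenius gap at the level of $\Cmn$ itself. The issue is rather that $k(X)^\PGLn$ may be strictly larger than the fraction field of the image of $\Cmn$ on $X$, and nothing you write explains why a $p$-power of $u$ should drop down to that fraction field. The Amitsur--Small theorem and trace-ring considerations you mention at the end govern characteristic polynomials of elements of $\Tmn$, not arbitrary $\PGLn$-invariant rational functions on $X$, so they do not bear on this.

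The paper's argument is geometric and avoids all of this. Let $F=Z(k_n(X))$. By Rosenlicht's theorem there is a rational quotient $\pi\colon X_0\to Y$ with $k(Y)=k(X)^\PGLn$ whose fibers are the $\PGLn$-orbits. Choose $Y'$ with $k(Y')=F$; the inclusion $F\hookrightarrow k(X)^\PGLn$ gives a dominant rational map $\rho\colon Y\dasharrow Y'$. Now the crucial input is Proposition~\ref{prop:2.3-in-prime-char}(a): the ring $\Cmn$ (equivalently, $F$) separates $\PGLn$-orbits in $\Umn$. Hence on a dense open $U\subseteq X_0$ the fibers of $\rho\circ\pi$ are already single orbits, which forces $\rho$ to be injective on the dense open set $\pi(U)\subseteq Y$. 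A dominant morphism of irreducible varieties that is injective on a dense open set induces a finite, purely inseparable extension of function fields (e.g.\ Springer, \emph{Linear Algebraic Groups}, Theorem~5.1.6), giving exactly that $k(X)^\PGLn/F$ is finite and purely inseparable. The orbit-separation property of $\Cmn$ is the missing idea in your plan.
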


We begin with a lemma.

\begin{lem}\label{lem-for-Prop:A}
  Let $X$ be an irreducible $\PGLn$-variety.  If the $k$-algebra
  $\RMaps_\PGLn(X,\Mn)$ contains a central simple algebra~$A$ of
  degree~$n$, then it is itself a central simple algebra of
  degree~$n$, and its center is the field $k(X)^\PGLn$.
\end{lem}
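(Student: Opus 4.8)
The plan is to work inside the big algebra $R = \RMaps_\PGLn(X,\Mn)$ and show it behaves like a central simple algebra because it contains a "large enough" one, namely $A$. The first step is to identify the center of $R$. Since $X$ is an irreducible $\PGLn$-variety, the scalar-valued $\PGLn$-equivariant rational maps $X\dasharrow kI_n$ are exactly the elements of $k(X)^\PGLn$ (identifying $kI_n$ with $k$), and these visibly lie in the center of $R$: a scalar function commutes with every matrix-valued map pointwise. Conversely, one must check that any element $f\in R$ in the center of $R$ is scalar-valued. For this, note that $R$ contains the constant maps $X\to\Mn$ given by the elementary matrix units $e_{ij}$ (these are $\PGLn$-invariant only after... wait, they are not equivariant); instead use that $R\supseteq A$ and $A$ has degree $n$, so at a generic point the values of elements of $A$ span $\Mn$; hence a central element of $R$, commuting in particular with all of $A$, must take values in the centralizer of $\Mn$ inside $\Mn$, i.e. must be scalar-valued at the generic point, hence scalar-valued. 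So $Z(R) = k(X)^\PGLn =: F$, a field.

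The second step is to see that $R$ is a domain, finite-dimensional over $F$, of the right dimension. Since $X$ is irreducible, $k(X)$ is a field, and $R$ embeds into $\Mn(k(X))$ by evaluating an equivariant rational map at the generic point of $X$ (more precisely, $R\hookrightarrow \Mn$ over the function field $k(X)$, restricting to the common domain of definition); this shows $R$ has no zero divisors and that $R\otimes_F k(X)$ embeds into $\Mn(k(X))$, so $\dim_F R \le n^2$. On the other hand $A\subseteq R$ with $\dim_F A = \dim_{Z(A)}A\cdot[Z(A):F]$; since $Z(A)\subseteq Z(R)=F$ we have $Z(A)=F$ and $\dim_F A = n^2$. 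Combining, $A = R$ forced on dimension grounds? Not quite — one needs $\dim_F R \le n^2$ AND $A\subseteq R$ with $\dim_F A = n^2$, which gives $R = A$ directly once we know $\dim_F A = n^2$. Then $R = A$ is central simple of degree $n$ over $F = k(X)^\PGLn$, as claimed.

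I would organize it as: (i) $F := k(X)^\PGLn \subseteq Z(R)$, clear; (ii) $R$ embeds in $\Mn$ over $k(X)$ via evaluation at the generic point, so $R$ is a domain and $\dim_F R\le n^2$ (using that $R\otimes_F k(X)\hookrightarrow \Mn(k(X))$, which also shows $F$ is exactly the center since the center of $R$ maps into the center of $\Mn(k(X))$, which is $k(X)$, intersected with the $\PGLn$-invariants of... here one uses that a $\PGLn$-invariant element of $k(X)$ lying in $R$'s center is in $F$); (iii) $Z(A)\subseteq Z(R)$, so $Z(A) = F$ and hence $\dim_F A = n^2$ since $A$ is central simple of degree $n$; (iv) therefore $A\subseteq R$ with $\dim_F A = n^2 \ge \dim_F R$, forcing $R = A$. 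The main obstacle I anticipate is step (ii)---specifically, making precise the evaluation-at-the-generic-point map and checking it is well-defined and injective on all of $R$ (not just on $A$), and simultaneously extracting from it both the dimension bound $\dim_F R\le n^2$ and the equality $Z(R) = F$ rather than merely $Z(R)\supseteq F$. Once that embedding $R\hookrightarrow\Mn\bigl(k(X)\bigr)$ is in hand, everything else is a short dimension count.
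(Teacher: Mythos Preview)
Your approach has a genuine gap in step~(iii). From $Z(A)\subseteq Z(R)=F$ you conclude $Z(A)=F$, but nothing forces the reverse inclusion: there is no reason for $F=k(X)^{\PGLn}$ to lie inside~$A$. In fact $Z(A)\subsetneq F$ is exactly the situation the paper must allow for. In the intended application (Proposition~\ref{prop:A}) one takes $A=k_n(X)$, and the whole point is that the center of $k_n(X)$ may be a \emph{proper} subfield of $k(X)^{\PGLn}$ (this is one of the open questions listed in the introduction). So your conclusion $R=A$ is too strong and in general false; the lemma only asserts that $R$ is a central simple algebra of degree~$n$, not that it coincides with~$A$.

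There are two subsidiary problems as well. First, the embedding $R\hookrightarrow\Mn(k(X))$ does \emph{not} show that $R$ ``has no zero divisors'': $\Mn(k(X))$ is full of zero divisors. What the embedding does give you is that $R$ is a subring of a prime ring, hence prime. Second, the injectivity of $R\otimes_F k(X)\to\Mn(k(X))$, which you need for the bound $\dim_F R\le n^2$, is not automatic: linear disjointness of $R$ and $k(X)$ over~$F$ would follow if $R$ were already known to be central simple over~$F$, but that is precisely what you are trying to prove.

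The paper sidesteps dimension counting entirely. It observes that $R$ sits between $A$ and $\Mn(k(X))$, both central simple of degree~$n$; since an inclusion of central simple algebras of the same degree is a central extension, $\Mn(k(X))$ is a central extension of~$A$, hence of~$R$. Then $R$ is prime (subring of a prime ring) of PI-degree~$n$ (sandwiched between two such), and its center lands in $k(X)\cdot I_n$ (citing \cite[Lemma~4.9]{braun-vn}), hence equals $k(X)^{\PGLn}$, a field. A prime PI-algebra whose center is a field is simple, so $R$ is central simple of degree~$n$. No claim that $R=A$ is made or needed.
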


\begin{proof}
  Since $X$ is irreducible, $k(X)$ is a field, and
  $R=\RMaps_\PGLn(X,\Mn)$ embeds naturally into
  $\RMaps(X,\Mn)=\Mn(k(X))$.  Since $A$ and $\Mn(k(X))$ are central
  simple algebras of degree~$n$, the latter is a central extension of
  the former, and thus also of $R$.  Since $\Mn(k(X))$ is a prime
  ring, so is $R$.  Since $R$ contains a central simple algebra of
  degree~$n$ and is contained in a central simple algebra of
  degree~$n$, $R$ must have PI-degree~$n$.  Consequently, the center
  of $R$ is contained in the center of $\Mn(k(X))$, i.e., in
  $k(X)\cdot I_n$ (see, e.g., \cite[Lemma~4.9]{braun-vn}).

  Thus the center of $R$ consists of the $\PGLn$-equivariant rational
  maps from $X$ to $k=k\cdot I_n$.  Hence the center of $R$ can be
  identified with $k(X)^\PGLn$, the field of $\PGLn$-invariant
  rational maps $X\dasharrow k$.  Since a prime PI-algebra whose
  center is a field is simple, $R$ is a central simple algebra of
  degree~$n$.
\end{proof}

\begin{proof}[Proof of Proposition~\ref{prop:A}]
  Set $R=\RMaps_\PGLn(X,\Mn)$.  The natural embedding $\psi_X\colon
  k_n(X) \hookrightarrow R$ is constructed in the discussion after
  Remark~7.2.  Lemma~\ref{lem-for-Prop:A} implies that $R$ is a
  central simple algebra of degree~$n$ with center $k(X)^\PGLn$.  It
  remains to show that $k(X)^\PGLn$ is a finite, purely inseparable
  extension of the center $F$ of $k_n(X)$.

  By a theorem of Rosenlicht (see \cite[Theorem~13.5.3]{FSR} or
  \cite[Theorem~6.2]{dolgachev}), there is a $\PGLn$-stable dense open
  subset $X_0$ in $X$ and an algebraic variety $Y$ with
  $k(Y)=k(X)^\PGLn$ such that the inclusion map $k(Y)\hookrightarrow
  k(X)$ induces a morphism $\pi\colon X_0\to Y$ with the following
  properties: $\pi$ is open and surjective, and the fibers of $\pi$
  are the $\PGLn$-orbits in $X_0$.  Let $Y'$ be an irreducible
  algebraic variety with $k(Y')=F$.  Then the inclusion
  $F\hookrightarrow k(Y)$ includes a dominant rational map $\rho\colon
  Y\dasharrow Y'$.

  By Proposition~\ref{prop:2.3-in-prime-char}(a), the elements of
  $\Cmn$ separate the $\PGLn$-orbits in $\Umn$.  As discussed in
  Section~\ref{section:inv-background}, we can also in prime
  characteristic naturally identify $\Cmn$ with the center of $\Tmn$.
  Hence $F$, which is the center of the total ring of fractions of
  $\Tmn/\calI_T(X)$ (c.f.\ Remark~7.2), separates the $\PGLn$-orbits
  in a dense open subset of $X$.  Thus there is a dense open subset
  $U$ of $X_0$ such that the fibers of $\rho\circ\pi$ on $U$ are the
  $\PGLn$-orbits in $U$.  Hence $\rho\colon Y\dasharrow Y'$ is
  injective on the dense open subset $\pi(U)$ of $Y$.  Consequently,
  $k(Y)=k(X)^\PGLn$ is a finite, purely inseparable extension of
  $k(Y')=F$ (see, e.g., \cite[Theorem~5.1.6]{springer:LAG}).
\end{proof}

\begin{defn}\label{defn:second-definition}
  Let $X \subset U_{m, n}$ and $Y \subset U_{l, n}$ be irreducible
  $n$-varieties, so that $\RMaps_\PGLn(X,\Mn)$ and
  $\RMaps_\PGLn(Y,\Mn)$ are central simple algebras of degree~$n$ (see
  Proposition~\ref{prop:A}).
  \begin{enumerate}
  \item[(a)] Let $f\colon X\dasharrow Y$ be a $\PGLn$-quivariant
    dominant rational map (of varieties).  Define
    \[f^\star\colon \RMaps_\PGLn(Y,\Mn)\to \RMaps_\PGLn(X,\Mn)\]%
    by $f^\star(g)=g\circ f$.  Then $f^\star$ is an injective $k$-algebra
    homomorphism.
  \item[(b)]Conversely, let $\alpha\colon \RMaps_\PGLn(Y,\Mn)\to
    \RMaps_\PGLn(X,\Mn)$ be a $k$-algebra homomorphism (necessarily
    injective).  Denote the generic matrices generating $G_{l,n}$ by
    $X_1$, \ldots, $X_l$, and their images in
    $\knY\subseteq\RMaps_\PGLn(Y,\Mn)$ by $\overline{X_1}$, \ldots,
    $\overline{X_l}$.  Then $f=\alpha_\star$ is the $\PGLn$-equi\-variant
    dominant rational map $X \dasharrow Y$ defined by
    \[f=(\alpha(\overline{X_1}),\ldots,\alpha(\overline{X_l}))\,.\]%
  \end{enumerate}
\end{defn}

In this context, we denote the induced maps using a star ($\star$)
instead of an asterisk ($*$) to make it possible to distinguish the
maps defined here from the ones discussed in the previous section.
This will be useful in Section~\ref{section:RMaps}, see, e.g.,
Remarks~\ref{rem:psi-X-natural} and~\ref{rem:psi-X-natural-II}.

That $\alpha_\star$ is well-defined follows from the following, slightly
more general result (which we will use in
Section~\ref{section:RMaps}), applied to the restriction of $\alpha$
to $k_n(Y)$.

\begin{lem}\label{lem:second-definition}
  Let $X$ be an irreducible $\PGLn$-variety and $Y\subseteq U_{l,n}$
  an irreducible $n$-variety.  Let $\alpha\colon
  k_n(Y)\to\RMaps_\PGLn(X,\Mn)$ be a $k$-algebra embedding.  Define
  $f$ as in
  Definition~\ref{defn:second-definition}{\upshape(}b{\upshape)}.
  Then $f$ is a $\PGLn$-equivariant dominant rational map $X\dasharrow
  Y$.
\end{lem}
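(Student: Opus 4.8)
The plan is to reduce Lemma~\ref{lem:second-definition} to the situation already handled in Lemma~\ref{lem:first-ext-of-def-7.5}, the only new feature being that the source $X$ is an arbitrary irreducible $\PGLn$-variety rather than an $n$-variety, so that $k_n(X)$ is no longer available and one works directly inside $R:=\RMaps_\PGLn(X,\Mn)\subseteq\Mn(k(X))$. First I would set $f=(f_1,\ldots,f_l)$ with $f_i=\alpha(\overline{X_i})\in R$, view each $f_i$ as a $\PGLn$-equivariant rational map $X\dasharrow\Mn$, and let $U$ be the nonempty $\PGLn$-invariant open subset of $X$ on which all $f_i$ (equivalently, all $\alpha(\bar p)$ for $p\in G_{l,n}$) are defined. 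For $x\in U$ and $p=p(X_1,\ldots,X_l)\in G_{l,n}$ one has, exactly as in \eqref{lem:first-ext-of-def-7.5:eqn}, the identity $p(f(x))=\alpha(\bar p)(x)$, since $\alpha$ is a $k$-algebra homomorphism; in particular $\calI(Y)$ vanishes at $f(x)$, so $f(x)$ lies in the zero set $\calZ_0$ of $\calI(Y)$ in $(\Mn)^l$, and $f(x)\in Y$ precisely when $f(x)\in U_{l,n}$, i.e.\ when the $k$-algebra map $\psi_x\colon\knY\to\Mn$, $\bar p\mapsto\alpha(\bar p)(x)$, is onto.

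Next I would produce a nonempty open $V\subseteq U$ with $f(V)\subseteq Y$, by the same device as in the proof of Lemma~\ref{lem:first-ext-of-def-7.5}. Choose a nonzero central $t\in\knY$ with $A=\knY[t^{-1}]$ Azumaya of PI-degree~$n$. Here the one point needing care is that $\alpha(t)$ need not factor as a ratio of central elements of a PI-coordinate ring; instead, $\alpha(t)$ is a nonzero central element of the central simple algebra $R$ (central simple of degree $n$ by Proposition~\ref{prop:A}), hence a nonzero element of its center, which by Proposition~\ref{prop:A} is $k(X)^\PGLn\subseteq k(X)$. So $\alpha(t)$, regarded as a rational function $X\dasharrow k$, is defined and nonzero on a nonempty open $V\subseteq U$; shrinking $V$, we may also assume $\alpha(t)$ is a unit there. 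For $x\in V$, $\psi_x(t)=\alpha(t)(x)$ is a nonzero scalar in $\Mn$, so $\psi_x$ extends to a $k$-algebra homomorphism $A\to\Mn$ with the same image, which is onto because $A$ is Azumaya of PI-degree~$n$. Hence $\psi_x$ is onto and $f(x)\in Y$, giving $f(V)\subseteq Y$.

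Finally I would upgrade ``rational map into $Y$'' to ``dominant rational map $X\dasharrow Y$'' verbatim as before: since $f$ is $\PGLn$-equivariant, replace $V$ by a $\PGLn$-invariant nonempty open subset of $X$ with $f(V)\subseteq Y$; the closure of $f(V)$ in $Y$ is then a $\PGLn$-invariant closed subset, hence an $n$-variety $Y'\subseteq Y$ with $\calI(Y)\subseteq\calI(Y')$. For $p\in\calI(Y')$ and $x\in V$ the displayed identity gives $0=p(f(x))=\alpha(\bar p)(x)$; as $V$ is dense in $X$ and $\alpha(\bar p)\in R\subseteq\Mn(k(X))$, this forces $\alpha(\bar p)=0$, and since $\alpha$ is injective, $\bar p=0$, i.e.\ $p\in\calI(Y)$. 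Thus $\calI(Y)=\calI(Y')$, so $Y=Y'$ by Theorem~5.7, and $f\colon X\dasharrow Y$ is dominant. The main obstacle, such as it is, is precisely the first sentence of the second paragraph: one must invoke Proposition~\ref{prop:A} to know that $R$ is central simple of degree~$n$, so that $\alpha(t)$ genuinely lands in a field $k(X)^\PGLn$ of honest rational functions on $X$ and the Azumaya-localization trick still applies; everything else is a transcription of the proof of Lemma~\ref{lem:first-ext-of-def-7.5}.
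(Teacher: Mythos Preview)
Your approach is essentially identical to the paper's: reduce to the argument of Lemma~\ref{lem:first-ext-of-def-7.5}, modifying only the proof of the claim that $f(V)\subseteq Y$ by treating $\alpha(t)$ as a $\PGLn$-invariant rational function on $X$. However, there is one slip, and it occurs precisely at the step you flag as the ``main obstacle.'' You justify that $R=\RMaps_\PGLn(X,\Mn)$ is central simple of degree~$n$ with center $k(X)^\PGLn$ by invoking Proposition~\ref{prop:A}; but that proposition assumes $X$ is an irreducible $n$-variety, whereas here $X$ is only an irreducible $\PGLn$-variety, so it does not apply. The correct reference, and the one the paper uses, is Lemma~\ref{lem-for-Prop:A}: since $\alpha$ embeds the degree-$n$ central simple algebra $k_n(Y)$ into $R$, that lemma (with $A=\alpha(k_n(Y))$) gives exactly the conclusion you need. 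With this correction, your argument matches the paper's proof.
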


\begin{proof}
  The map $f$ is clearly a $\PGLn$-equivariant rational map
  $X\dasharrow (\Mn)^l$.  That it is actually a dominant rational map
  $X\dasharrow Y$ is shown as in the proof of
  Lemma~\ref{lem:first-ext-of-def-7.5}; the argument goes through
  nearly literally, with exception of the proof of the claim, which we
  modify as follows: By Lemma~\ref{lem-for-Prop:A},
  $\RMaps_\PGLn(X,\Mn)$ is a central simple algebra of degree~$n$ with
  center $k(X)^\PGLn$.  So the central element $\alpha(t)$ is a
  $\PGLn$-invariant rational map $X\dasharrow k\cdot I_n$.  Let $V$ be
  the nonempty open subset of $U$ where $\alpha(t)$ is defined and
  nonzero.  Then for $x\in V$, $\psi_x(t)=\alpha(t)(x)$ is again a
  nonzero scalar in $\Mn$.  The rest of the argument goes through
  unchanged.
\end{proof}

\begin{lem}\label{lem:II.8.5}
  Let $X$ and $Y$ be irreducible $n$-varieties.  Let
  \[\alpha\colon \RMaps_\PGLn(Y,\Mn)\to\RMaps_\PGLn(X,\Mn)\]%
  be a $k$-algebra homomorphism.  Then $(\alpha_\star)^\star=\alpha$.  That
  is, $\alpha(g)=g\circ\alpha_\star$ for all $g\in\RMaps_\PGLn(Y,\Mn)$.
\end{lem}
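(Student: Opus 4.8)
The plan is to reduce the claim to the generators of $\RMaps_\PGLn(Y,\Mn)$, much as one does for the analogous statements $(f^*)_*=f$ and $(\alpha_*)^*=\alpha$ recorded in Remark~7.6 and used in the proof of Theorem~\ref{thm1prime}. First I would recall that, by Proposition~\ref{prop:A}, $\RMaps_\PGLn(Y,\Mn)$ is a central simple algebra of degree~$n$ which equals $k_n(Y)\cdot k(Y)^\PGLn$, and that $k_n(Y)$ is generated as a $k$-algebra over its center by the images $\overline{X_1},\ldots,\overline{X_l}$ of the generic matrices together with (central) denominators. So it suffices to check the identity $\alpha(g)=g\circ\alpha_\star$ separately for $g=\overline{X_i}$, for $g$ a central rational function in $k(Y)^\PGLn$, and then to observe that both sides are $k$-algebra homomorphisms $\RMaps_\PGLn(Y,\Mn)\to\RMaps_\PGLn(X,\Mn)$, so the equality propagates to sums, products, and inverses of central elements.

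For the generators $g=\overline{X_i}$ this is essentially the definition: writing $f=\alpha_\star=(\alpha(\overline{X_1}),\ldots,\alpha(\overline{X_l}))$ as in Definition~\ref{defn:second-definition}(b), one has for $x$ in the domain of definition that $(\overline{X_i}\circ f)(x)$ is the $i$-th coordinate of $f(x)$, namely $\alpha(\overline{X_i})(x)$; hence $\overline{X_i}\circ\alpha_\star=\alpha(\overline{X_i})$ as elements of $\RMaps_\PGLn(X,\Mn)$. The computation in \eqref{lem:first-ext-of-def-7.5:eqn} (valid here by Lemma~\ref{lem:second-definition}, which guarantees $f$ is a dominant rational map $X\dasharrow Y$ so that composition with $g\in\RMaps_\PGLn(Y,\Mn)$ makes sense) then shows more generally that for any $p=p(X_1,\ldots,X_l)\in G_{l,n}$, $\bar p\circ\alpha_\star=\alpha(\bar p)$, i.e.\ the identity holds on the subalgebra $k_n[Y]$.

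It then remains to handle central elements of $k(Y)^\PGLn$ and to pass from $k_n[Y]$ to all of $\RMaps_\PGLn(Y,\Mn)$. Both maps $g\mapsto\alpha(g)$ and $g\mapsto g\circ\alpha_\star$ are $k$-algebra homomorphisms out of the central simple algebra $\RMaps_\PGLn(Y,\Mn)=k_n[Y]\cdot k(Y)^\PGLn$ localized at central denominators; since they agree on $k_n[Y]$ and any element of $\RMaps_\PGLn(Y,\Mn)$ is a central-fraction combination of elements of $k_n[Y]$ (using that $\RMaps_\PGLn(Y,\Mn)$ is a finite purely inseparable central extension of $k_n(Y)$, which is in turn the total ring of fractions of $k_n[Y]$), it is enough to know the two homomorphisms agree on a generating set for the center $k(Y)^\PGLn$ and send denominators to invertible elements. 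Agreement on a central $t\in k(Y)^\PGLn$ follows exactly as in the claim inside the proof of Lemma~\ref{lem:second-definition}: evaluating at a point $x$ in a suitable dense open set, $\psi_x(t)=\alpha(t)(x)$ is a nonzero scalar, which is precisely the statement $t\circ\alpha_\star=\alpha(t)$. The main obstacle is the bookkeeping needed to make "generated by central fractions" precise in the purely inseparable setting — one must be careful that a $p$-th root of a central element lying in $k(Y)^\PGLn\setminus F$ is still handled, but this is covered because such roots are again central rational maps $Y\dasharrow k\cdot I_n$ and the pointwise evaluation argument applies to them verbatim. Once agreement on generators of the center is established, multiplicativity and additivity of both homomorphisms finish the proof.
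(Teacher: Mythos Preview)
Your reduction to $k_n[Y]$ via \eqref{lem:first-ext-of-def-7.5:eqn} is the same as the paper's first step, and is correct.  The gap is in your handling of the central elements $t\in k(Y)^\PGLn$ that are not already in the center of $k_n(Y)$.  You claim that ``$\psi_x(t)=\alpha(t)(x)$ is a nonzero scalar, which is precisely the statement $t\circ\alpha_\star=\alpha(t)$'', but this is a non sequitur: in the proof of Lemma~\ref{lem:second-definition}, $\psi_x$ is \emph{defined} by $\psi_x(\bar p)=\alpha(\bar p)(x)$, so the equality $\psi_x(t)=\alpha(t)(x)$ is tautological (when it makes sense) and says nothing about $t(\alpha_\star(x))$.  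The identity $\bar p(\alpha_\star(x))=\alpha(\bar p)(x)$ in \eqref{lem:first-ext-of-def-7.5:eqn} works precisely because $\alpha_\star$ was defined using the $\overline{X_i}$'s and $\bar p$ is a noncommutative polynomial in those; an arbitrary $t\in k(Y)^\PGLn$ need not be any such expression, so the ``pointwise evaluation argument'' does \emph{not} apply to it verbatim.

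The paper closes this gap with a separate lemma (Lemma~\ref{lem:8.4}): two $k$-algebra homomorphisms $\RMaps_\PGLn(Y,\Mn)\to\RMaps_\PGLn(X,\Mn)$ that agree on $k_n[Y]$ agree everywhere.  The proof uses the purely inseparable property from Proposition~\ref{prop:A} in the right way: for $g\in k(Y)^\PGLn$ choose $N$ with $g^{p^N}$ in the center of $k_n(Y)$; then $\alpha(g)^{p^N}=\alpha(g^{p^N})=\beta(g^{p^N})=\beta(g)^{p^N}$, and since $\alpha(g),\beta(g)\in k(X)^\PGLn$ are scalar-valued, injectivity of the Frobenius on $k$ forces $\alpha(g)(x)=\beta(g)(x)$ pointwise.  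You mention $p$-th roots and the purely inseparable extension, but you never actually use them; the essential move is to raise to a $p^N$-th power to land inside $k_n(Y)$ (where agreement is already established) and then invoke Frobenius injectivity to descend.
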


\begin{proof}
  Set $\beta=(\alpha_\star)^\star$, so that by definition,
  $\beta(g)=g\circ\alpha_\star$.  We have to show that $\beta=\alpha$.
  For $g=\bar p\in\knY$, the argument at the beginning of the proof of
  Lemma~\ref{lem:first-ext-of-def-7.5} shows that $\beta(g)=\bar
  p\circ\alpha_\star=p\circ\alpha_\star=\alpha(\bar p)=\alpha(g)$, see
  \eqref{lem:first-ext-of-def-7.5:eqn}.  Hence $\beta=\alpha$ by the
  next lemma.
\end{proof}

\begin{lem}\label{lem:8.4}
  Let $X$ and $Y$ be irreducible $n$-varieties.  Let
  \[\alpha,\beta\colon \RMaps_\PGLn(Y,\Mn)\to\RMaps_\PGLn(X,\Mn)\]%
  be $k$-algebra homomorphisms agreeing on $k_n[Y]$ {\upshape(}more
  precisley, agreeing on $\psi_Y(\knY)${\upshape)}.  Then
  $\alpha=\beta$.
\end{lem}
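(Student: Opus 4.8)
The plan is to show that two $k$-algebra homomorphisms $\alpha,\beta\colon\RMaps_\PGLn(Y,\Mn)\to\RMaps_\PGLn(X,\Mn)$ that agree on the subalgebra $\psi_Y(\knY)$ must agree everywhere, by exploiting the description of $\RMaps_\PGLn(Y,\Mn)$ from Proposition~\ref{prop:A}: namely $\RMaps_\PGLn(Y,\Mn)=k_n(Y)\cdot k(Y)^\PGLn$, and $k(Y)^\PGLn$ is a finite purely inseparable extension of the center $F$ of $k_n(Y)$. The key point is that any element of $\RMaps_\PGLn(Y,\Mn)$ can be written as a $k(Y)^\PGLn$-linear combination of elements of $k_n(Y)$, and the central field $k(Y)^\PGLn$ is forced along once we know $\alpha,\beta$ agree on $k_n(Y)$ (which we get from agreement on $\knY$ by multiplicativity on the Ore-type fractions).

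First I would note that since $\alpha$ and $\beta$ agree on $\psi_Y(\knY)$, and every element of $k_n(Y)$ is of the form $a b\inv$ with $a\in\knY$ and $b$ a nonzero central element of $\knY$ (so that $\alpha(b)$, $\beta(b)$ are nonzero central, hence invertible, elements of the central simple algebra $\RMaps_\PGLn(X,\Mn)$), the two homomorphisms agree on all of $k_n(Y)$. In particular, they agree on the center $F$ of $k_n(Y)$, and $\alpha(F)=\beta(F)$ lies in the center $k(X)^\PGLn$ of $\RMaps_\PGLn(X,\Mn)$. Next, let $z\in k(Y)^\PGLn$ be arbitrary. Since $k(Y)^\PGLn/F$ is purely inseparable, there is a power $q=p^e$ of the characteristic $p$ (with $q=1$ if the characteristic is zero, in which case $k(Y)^\PGLn=F$ and there is nothing to prove) such that $z^q\in F$. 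Then $\alpha(z)^q=\alpha(z^q)=\beta(z^q)=\beta(z)^q$. Because $\alpha(z)$ and $\beta(z)$ both lie in the field $k(X)^\PGLn$ (the center of $\RMaps_\PGLn(X,\Mn)$, which is a field by Lemma~\ref{lem-for-Prop:A}), and the Frobenius $t\mapsto t^q$ is injective on a field of characteristic $p$, we conclude $\alpha(z)=\beta(z)$. Thus $\alpha$ and $\beta$ agree on $k(Y)^\PGLn$ as well.

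Finally, since $\RMaps_\PGLn(Y,\Mn)=k_n(Y)\cdot k(Y)^\PGLn$ by Proposition~\ref{prop:A}, every element $g\in\RMaps_\PGLn(Y,\Mn)$ is a finite sum $g=\sum_i c_i g_i$ with $c_i\in k(Y)^\PGLn$ central and $g_i\in k_n(Y)$; applying multiplicativity and additivity and the two agreements just established gives $\alpha(g)=\sum_i\alpha(c_i)\alpha(g_i)=\sum_i\beta(c_i)\beta(g_i)=\beta(g)$, so $\alpha=\beta$.

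I expect the only genuine subtlety to be the purely inseparable Frobenius step: one must be careful that $\alpha(z)$ and $\beta(z)$ are forced to lie in a \emph{field}, so that $t\mapsto t^q$ is injective on the relevant set — this is exactly what Lemma~\ref{lem-for-Prop:A} provides, identifying the center of $\RMaps_\PGLn(X,\Mn)$ with the field $k(X)^\PGLn$. The reduction of $k_n(Y)$-agreement to $\knY$-agreement via central denominators, and the final spanning argument, are routine. In characteristic zero the lemma is immediate since then $\RMaps_\PGLn(Y,\Mn)=k_n(Y)$ by Proposition~7.3, but the argument above works uniformly.
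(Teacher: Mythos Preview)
Your proof is correct and follows essentially the same approach as the paper: extend agreement from $\knY$ to $k_n(Y)$ via central denominators, then use the purely inseparable extension $k(Y)^\PGLn/F$ from Proposition~\ref{prop:A} together with injectivity of Frobenius to force agreement on $k(Y)^\PGLn$, and conclude via the spanning $\RMaps_\PGLn(Y,\Mn)=k_n(Y)\cdot k(Y)^\PGLn$. The only cosmetic difference is that the paper applies Frobenius injectivity pointwise (evaluating at $x\in X$ to land in $k$), whereas you apply it directly in the field $k(X)^\PGLn$, which is slightly cleaner.
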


\begin{proof}
  In characteristic zero, this is clear by Proposition~7.3.  So assume
  the characteristic of $k$ is $p\neq0$.  By Proposition~\ref{prop:A},
  $\RMaps_\PGLn(Y,\Mn)$ is generated by $k_n(Y)$ and $k(Y)^\PGLn$.
  Since $k_n(Y)$ is the total ring of fractions of $k_n[Y]$, it
  suffices to show that $\alpha(g)=\beta(g)$ for an arbitrary $g\in
  k(Y)^\PGLn$.  By Proposition~\ref{prop:A}, there is an integer $N$
  such that $g^{p^N}$ belongs to the center of $k_n(Y)$.  Hence
  $\alpha(g^{p^N})=\beta(g^{p^N})$.  Since $\alpha$ and $\beta$ map
  central elements to central elements, $\alpha(g)$ and $\beta(g)$
  both belong to $k(X)^\PGLn$.  Let $x\in X$ such that $\alpha(g)$ and
  $\beta(g)$ are both defined at $x$.  Then $\alpha(g)(x)$ and
  $\beta(g)(x)$ are scalars in $k=k\cdot I_n$.  Now
  \[\alpha(g^{p^N})(x)=\alpha(g)^{p^N}(x)=[\alpha(g)(x)]^{p^N}\,.\]%
  Similarly, $\beta(g^{p^N})(x)=[\beta(g)(x)]^{p^N}$.  Hence
  $[\alpha(g)(x)]^{p^N}=[\beta(g)(x)]^{p^N}$.  Since the Frobenius map
  is injective, it follows that $\alpha(g)(x)=\beta(g)(x)$.  Since
  this is true for all $x$ in a dense open subset of $X$,
  $\alpha(g)=\beta(g)$.
\end{proof}

We now consider irreducible $\PGLn$-varieties that are
$\PGLn$-equivariantly birationally isomorphic to irreducible
$n$-varieties.  Let $f\colon Y\dasharrow X$ be a $\PGLn$-equivariant
dominant rational map between two such $\PGLn$-varieties.  Then
$f^\star(g)=g\circ f$ induces a $k$-algebra homomorphism
\[f^\star\colon\RMP{X}\to\RMP{Y}\,.\]
Note that $\RMP{X}$ and $\RMP{Y}$ are central simple algebra of
degree~$n$: For example, say $X$ is $\PGLn$-equivariantly birationally
isomorphic to the $n$-variety~$X'$.  Then $\RMP{X}$ is isomorphic to
$\RMP{X'}$, which is a central simple algebra of degree~$n$ by
Proposition~\ref{prop:A}.

\begin{prop}\label{prop:II.8.7}
  Let $X$ and $Y$ be irreducible $\PGLn$-varieties that are
  $\PGLn$-equivariantly birationally isomorphic to irreducible
  $n$-varieties.  Let
  \[\alpha\colon\RMP{X}\to\RMP{Y}\]
  be a $k$-algebra homomorphism.  Then there is a unique
  $\PGLn$-equivariant dominant rational map $\alpha_\star\colon
  Y\dasharrow X$ such that $(\alpha_\star)^\star=\alpha$.
\end{prop}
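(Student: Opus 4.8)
The plan is to reduce the general statement, about arbitrary irreducible $\PGLn$-varieties birationally isomorphic to $n$-varieties, to the case of honest $n$-varieties, which is already covered by Lemma~\ref{lem:II.8.5} together with the existence part of Proposition~\ref{prop:A} and Definition~\ref{defn:second-definition}. First I would fix $\PGLn$-equivariant birational isomorphisms $u\colon X\dasharrow X'$ and $v\colon Y\dasharrow Y'$ with $X'$ and $Y'$ irreducible $n$-varieties; these induce $k$-algebra isomorphisms $u^\star\colon\RMP{X'}\to\RMP{X}$ and $v^\star\colon\RMP{Y'}\to\RMP{Y}$ (the inverses being $(u\inv)^\star$ and $(v\inv)^\star$, using functoriality of $\star$ on $\PGLn$-equivariant dominant rational maps, which follows by the same argument that proves $f^\star(g)=g\circ f$ is a homomorphism and $(g\circ f)^\star=f^\star\circ g^\star$). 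Then $\beta=(v^\star)\inv\circ\alpha\circ u^\star\colon\RMP{X'}\to\RMP{Y'}$ is a $k$-algebra homomorphism between the algebras attached to genuine $n$-varieties.

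Next I would invoke the $n$-variety case: by Definition~\ref{defn:second-definition}(b) (whose well-definedness is Lemma~\ref{lem:second-definition}) we get a $\PGLn$-equivariant dominant rational map $\beta_\star\colon Y'\dasharrow X'$, and by Lemma~\ref{lem:II.8.5} we have $(\beta_\star)^\star=\beta$. Now set $\alpha_\star=u\inv\circ\beta_\star\circ v\colon Y\dasharrow X$; this is a composite of $\PGLn$-equivariant dominant rational maps, hence again a $\PGLn$-equivariant dominant rational map. To check $(\alpha_\star)^\star=\alpha$ I would compute $(\alpha_\star)^\star=(u\inv\circ\beta_\star\circ v)^\star=v^\star\circ(\beta_\star)^\star\circ(u\inv)^\star=v^\star\circ\beta\circ(u^\star)\inv=\alpha$, using the contravariant functoriality of $\star$ and the fact that $(u\inv)^\star=(u^\star)\inv$. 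This gives existence.

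For uniqueness, suppose $f,g\colon Y\dasharrow X$ are $\PGLn$-equivariant dominant rational maps with $f^\star=g^\star=\alpha$. Transport them: $u\circ f\circ v\inv$ and $u\circ g\circ v\inv$ are $\PGLn$-equivariant dominant rational maps $Y'\dasharrow X'$ whose $\star$-images are both $\beta$. So it suffices to prove uniqueness for $n$-varieties, i.e.\ that a $\PGLn$-equivariant dominant rational map $h\colon Y'\dasharrow X'$ is determined by $h^\star$. But $h^\star$ determines $h^\star(\overline{X_i})=\overline{X_i}\circ h$ for the generic matrices $X_i\in G_{m,n}$ (writing $X'\subseteq U_{m,n}$), and $\overline{X_i}\circ h$ is just the $i$-th coordinate of $h$ viewed as a rational map into $(\Mn)^m\supseteq X'$; since the coordinate functions of $h$ are thereby pinned down, $h$ itself is. Hence $f=g$.

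The main obstacle I anticipate is bookkeeping rather than a genuine difficulty: one must make sure that composition of $\PGLn$-equivariant dominant rational maps behaves well under $\star$ (that $(\phi\circ\psi)^\star=\psi^\star\circ\phi^\star$ and that $\star$ sends the identity to the identity and inverses to inverses), so that $u^\star$ and $v^\star$ really are isomorphisms with the expected inverses and the transport argument is legitimate. This is the analogue of Remark~7.6 and Lemma~7.7 in the present $\star$-setting, and is immediate from the definition $f^\star(g)=g\circ f$; I would state it explicitly (perhaps as a one-line remark before the proof) and then the rest is a formal diagram chase. A secondary point to be careful about is that all the maps involved are only rational, so equalities like $(u\inv\circ\beta_\star\circ v)^\star=v^\star\circ(\beta_\star)^\star\circ(u\inv)^\star$ are equalities of $\PGLn$-equivariant rational maps on a common dense open set, which is fine since $\RMP{-}$ consists precisely of such equivalence classes.
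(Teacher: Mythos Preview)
Your proposal is correct and follows essentially the same route as the paper: reduce to the case of honest $n$-varieties via chosen $\PGLn$-equivariant birational isomorphisms $X\dasharrow X'$, $Y\dasharrow Y'$, apply Definition~\ref{defn:second-definition}(b) and Lemma~\ref{lem:II.8.5} to the transported homomorphism $\beta$, and then transport back. The paper is terser (it defers the explicit formula $\alpha_\star=u^{-1}\circ\beta_\star\circ v$ and the verification $(\alpha_\star)^\star=\alpha$ to ``as in the proof of Lemma~8.5''), and phrases uniqueness in the $n$-variety case as $(h^\star)_\star=h$, which is exactly your observation that $h^\star(\overline{X_i})$ recovers the $i$-th coordinate of $h$.
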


\begin{proof}
  Assume first that $X$ and $Y$ are $n$-varieties.  Then we can take
  $\alpha_\star$ as in Definition~\ref{defn:second-definition}, and
  $(\alpha_\star)^\star=\alpha$ by Lemma~\ref{lem:II.8.5}.  Say
  $h\colon Y\dasharrow X$ is another $\PGLn$-equivariant dominant
  rational map such that $h^\star=\alpha$.  It follows easily from
  Definition~\ref{defn:second-definition} that $(h^\star)_\star=h$.
  Hence $h=(h^\star)_\star=\alpha_\star$, proving the uniqueness of
  $\alpha_\star$.

  In the general case, say $f\colon X\dasharrow X'$ and $g\colon
  Y\dasharrow Y'$ are $\PGLn$-equivariant birational isomorphisms,
  where $X'$ and $Y'$ are irreducible $n$-varieties.  Consider the
  $k$-algebra homomorphism
  \[\beta = (g^\star)\inv \circ \alpha \circ
  f^\star \colon \RMaps_\PGLn(X',\Mn) \to \RMaps_\PGLn(Y',\Mn)\,.\]
  By the first case, there is a unique $\PGLn$-equivariant, dominant
  rational map $\beta_\star \colon Y' \dasharrow X'$ such that
  $(\beta_\star)^\star =\beta$.  One sees now as in the proof of
  Lemma~8.5 that $\alpha_\star$ exists and is unique; the only
  modification needed is to replace $k_n(\underline{\hphantom{X}})$ by
  $\RMaps_\PGLn(\underline{\hphantom{X}},\Mn)$.  (Note the typo in the
  diagram in the proof of Lemma~8.5: the arrow for $\beta_*$ should be
  reversed.)
\end{proof}

\begin{cor}\label{cor:II.8.8}
  Let $X$ and $Y$ be irreducible $\PGLn$-varieties that are
  $\PGLn$-equivariantly birationally isomorphic to irreducible
  $n$-varieties.  Let $f\colon X\dasharrow Y$ be a $\PGLn$-equivariant
  dominant rational map.  Then $(f^\star)_\star=f$.
\end{cor}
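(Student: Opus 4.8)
The plan is to obtain this as an immediate formal consequence of the uniqueness clause in Proposition~\ref{prop:II.8.7}, without any new geometric input. First I would record that the pullback is legitimate: since $f\colon X\dasharrow Y$ is a $\PGLn$-equivariant dominant rational map and both $X$ and $Y$ are $\PGLn$-equivariantly birationally isomorphic to irreducible $n$-varieties, the algebras $\RMP{X}$ and $\RMP{Y}$ are central simple of degree~$n$ (Proposition~\ref{prop:A} together with the birational-isomorphism hypothesis, as noted just before Proposition~\ref{prop:II.8.7}), and $f^\star\colon\RMP{Y}\to\RMP{X}$, $g\mapsto g\circ f$, is a well-defined $k$-algebra homomorphism between them — this is exactly the construction in the discussion preceding Proposition~\ref{prop:II.8.7}, read with the source variety $X$ and target variety $Y$.

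Next I would invoke Proposition~\ref{prop:II.8.7} with $\alpha=f^\star$, taking the proposition's ``$X$'' to be our $Y$ and its ``$Y$'' to be our $X$ (both still birationally isomorphic to irreducible $n$-varieties, so the hypotheses are met). The proposition yields a $\PGLn$-equivariant dominant rational map $(f^\star)_\star\colon X\dasharrow Y$ with $\bigl((f^\star)_\star\bigr)^\star=f^\star$, and asserts that it is the \emph{unique} $\PGLn$-equivariant dominant rational map $X\dasharrow Y$ whose star-pullback equals $f^\star$. Since $f$ is itself a $\PGLn$-equivariant dominant rational map $X\dasharrow Y$ and trivially satisfies $f^\star=f^\star$, the uniqueness assertion forces $(f^\star)_\star=f$, which is the claim.

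The only things requiring verification are bookkeeping: that $f^\star$ indeed falls under the class of $k$-algebra homomorphisms to which Proposition~\ref{prop:II.8.7} applies (handled in the first paragraph), and that $f$ is an admissible competitor in the uniqueness statement (immediate from the hypotheses on $f$). I do not expect any genuine obstacle here — all the substance is already in Proposition~\ref{prop:II.8.7}, and this corollary merely records the complementary half of the statement ``$(\,\cdot\,)^\star$ and $(\,\cdot\,)_\star$ are mutually inverse on morphisms'', the other half $(\alpha_\star)^\star=\alpha$ having been established there.
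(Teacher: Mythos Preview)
Your proposal is correct and is essentially the paper's own proof: set $\alpha=f^\star$, and since Proposition~\ref{prop:II.8.7} guarantees a unique $\alpha_\star$ with $(\alpha_\star)^\star=\alpha$, while $f$ itself satisfies $f^\star=\alpha$, uniqueness gives $(f^\star)_\star=f$. The paper says this in one line; your extra paragraph verifying that $f^\star$ is a legitimate input to Proposition~\ref{prop:II.8.7} (with the roles of $X$ and $Y$ swapped) is just careful bookkeeping.
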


\begin{proof}
  Set $\alpha=f^\star$.  Since $\alpha_\star$ is the unique such map
  with the property that $(\alpha_\star)^\star=\alpha$,
  $f=\alpha_\star=(f^\star)_\star$.
\end{proof}

\begin{cor}\label{cor:II.8.9}
  Let $X$, $Y$ and $Z$ be irreducible $\PGLn$-varieties that are
  $\PGLn$-equivariantly birationally isomorphic to irreducible
  $n$-varieties.
  \begin{enumerate}
  \item[\rm(a)] If $f \colon X \dasharrow Y$ and $g \colon Y \dasharrow
    Z$ are $\PGLn$-equivariant dominant rational maps then $(g \circ
    f)^\star = f^\star \circ g^\star$.
  \item[\rm(b)]Set $R=\RMaps_\PGLn(X,\Mn)$.  Then $(\id_X)^\star = \id_R$,
    and $(\id_R)_\star = \id_X$.
  \item[\rm(c)]If $\alpha \colon \RMaps_\PGLn(Y,\Mn) \to
    \RMaps_\PGLn(X,\Mn)$ and\\ $\beta \colon \RMaps_\PGLn(Z,\Mn) \to
    \RMaps_\PGLn(Y,\Mn)$ are $k$-algebara homomorphisms then $(\alpha
    \circ \beta)_\star = \beta_\star \circ \alpha_\star$.
  \item[\rm(d)]$X$ and $Y$ are birationally isomorphic as
    $\PGLn$-varieties if and only if $\RMaps_\PGLn(X,\Mn)$ and
    $\RMaps_\PGLn(Y,\Mn)$ are isomorphic as $k$-algebras.
  \end{enumerate}
\end{cor}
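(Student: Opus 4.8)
The plan is to treat Corollary~\ref{cor:II.8.9} as pure bookkeeping on top of three facts already in hand: the defining formula $f^\star(g)=g\circ f$ (Definition~\ref{defn:second-definition}), the existence-and-uniqueness statement of Proposition~\ref{prop:II.8.7} (every $k$-algebra homomorphism $\alpha$ between the relevant $\RMaps$-algebras has a unique $\PGLn$-equivariant dominant rational map $\alpha_\star$ with $(\alpha_\star)^\star=\alpha$), and Corollary~\ref{cor:II.8.8} ($(f^\star)_\star=f$). I would prove the four parts in the order (a), (b), (c), (d), since each feeds the next. Throughout I use the standard fact that a composite of $\PGLn$-equivariant dominant rational maps between irreducible varieties is again one (the preimage of a dense open set under a dominant rational map of irreducible varieties is dense open), so that every induced map $(\,\cdot\,)^\star$ and $(\,\cdot\,)_\star$ written below is defined.

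For (a): given $f\colon X\dasharrow Y$ and $g\colon Y\dasharrow Z$, the composite $g\circ f\colon X\dasharrow Z$ is again a $\PGLn$-equivariant dominant rational map, and for every $h\in\RMP{Z}$ associativity of composition gives $(g\circ f)^\star(h)=h\circ(g\circ f)=(h\circ g)\circ f=g^\star(h)\circ f=f^\star\bigl(g^\star(h)\bigr)$, whence $(g\circ f)^\star=f^\star\circ g^\star$. For (b): from $f^\star(g)=g\circ f$ we get $(\id_X)^\star(g)=g\circ\id_X=g$ for all $g$, so $(\id_X)^\star=\id_R$. Next, by the uniqueness clause of Proposition~\ref{prop:II.8.7} applied to the homomorphism $\id_R\colon R\to R$, the map $(\id_R)_\star$ is characterized as the unique $\PGLn$-equivariant dominant rational map $X\dasharrow X$ whose $\star$ is $\id_R$; since $\id_X$ has exactly this property, $(\id_R)_\star=\id_X$.

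For (c): let $\alpha\colon\RMP{Y}\to\RMP{X}$ and $\beta\colon\RMP{Z}\to\RMP{Y}$ be $k$-algebra homomorphisms, so that by Proposition~\ref{prop:II.8.7} we have $\alpha_\star\colon X\dasharrow Y$ and $\beta_\star\colon Y\dasharrow Z$, and hence a $\PGLn$-equivariant dominant rational map $\beta_\star\circ\alpha_\star\colon X\dasharrow Z$. Using part~(a) together with $(\alpha_\star)^\star=\alpha$ and $(\beta_\star)^\star=\beta$ (Proposition~\ref{prop:II.8.7}), I compute $(\beta_\star\circ\alpha_\star)^\star=(\alpha_\star)^\star\circ(\beta_\star)^\star=\alpha\circ\beta$. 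Since $(\alpha\circ\beta)_\star$ is, again by Proposition~\ref{prop:II.8.7}, the unique $\PGLn$-equivariant dominant rational map whose $\star$ equals $\alpha\circ\beta$, it follows that $(\alpha\circ\beta)_\star=\beta_\star\circ\alpha_\star$.

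For (d): if $X$ and $Y$ are $\PGLn$-equivariantly birationally isomorphic, choose $\PGLn$-equivariant dominant rational maps $f\colon X\dasharrow Y$ and $g\colon Y\dasharrow X$ with $g\circ f=\id_X$ and $f\circ g=\id_Y$; then parts~(a) and~(b) give $f^\star\circ g^\star=(g\circ f)^\star=\id_{\RMaps_\PGLn(X,\Mn)}$ and $g^\star\circ f^\star=(f\circ g)^\star=\id_{\RMaps_\PGLn(Y,\Mn)}$, so $f^\star$ and $g^\star$ are mutually inverse $k$-algebra isomorphisms. Conversely, if $\alpha\colon\RMaps_\PGLn(X,\Mn)\to\RMaps_\PGLn(Y,\Mn)$ is a $k$-algebra isomorphism, I apply part~(c) to the identities $\alpha^{-1}\circ\alpha=\id_{\RMaps_\PGLn(X,\Mn)}$ and $\alpha\circ\alpha^{-1}=\id_{\RMaps_\PGLn(Y,\Mn)}$ (taking care of the direction-reversal in $\phi\mapsto\phi_\star$) and use part~(b) to get $\alpha_\star\circ(\alpha^{-1})_\star=\id_X$ and $(\alpha^{-1})_\star\circ\alpha_\star=\id_Y$; hence $\alpha_\star\colon Y\dasharrow X$ and $(\alpha^{-1})_\star\colon X\dasharrow Y$ are mutually inverse $\PGLn$-equivariant dominant rational maps, exhibiting $X$ and $Y$ as birationally isomorphic $\PGLn$-varieties. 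I expect no genuine obstacle: the whole corollary is a formal consequence of the dictionary established in Proposition~\ref{prop:II.8.7} and Corollary~\ref{cor:II.8.8}, parallel to Lemma~\ref{lem:6.3new} and Lemma~7.7; the only points needing attention are keeping the arrow directions straight when feeding part~(c) into part~(d) and checking that the composites of rational maps involved are actually defined, which holds because they are dominant.
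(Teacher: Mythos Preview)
Your proof is correct and follows essentially the same approach as the paper: parts~(a) and the first identity in~(b) come directly from the defining formula $f^\star(g)=g\circ f$, the second identity in~(b) and part~(c) are obtained via the uniqueness clause of Proposition~\ref{prop:II.8.7} (the paper phrases the second identity in~(b) as an application of Corollary~\ref{cor:II.8.8}, which amounts to the same thing), and part~(d) is deduced from (a), (b), (c) exactly as you do.
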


\begin{proof} (a) and the first identity in (b) are immediate from
  Definition~\ref{defn:second-definition}.  The second identity in~(b)
  follows from the first and Corollary~\ref{cor:II.8.8}. (c)~Let $f =
  (\alpha \circ \beta)_\star$ and $g = \beta_\star \circ
  \alpha_\star$.  By Proposition~\ref{prop:II.8.7} and part~(a),
  $f^\star =\alpha\circ\beta = g^\star$. The uniqueness assertion of
  Proposition~\ref{prop:II.8.7} now implies $f = g$.  (d)~follows from
  (a), (b) and (c) (cf.~the proof of Lemma~6.3).
\end{proof}

\begin{proof}[Proof of Theorem~\ref{thm:Dnew}]
  By the discussion before Proposition~\ref{prop:II.8.7}, the
  assignment in Theorem~\ref{thm:Dnew} is well-defined.  It is a
  contravariant functor by Corollary~\ref{cor:II.8.9}.  It is full
  since $\alpha=(\alpha_\star)^\star$ (Proposition~\ref{prop:II.8.7}),
  and it is faithful since $(f^\star)_\star=f$
  (Corollary~\ref{cor:II.8.8}).
\end{proof}

\section{More on $\RMaps_\PGLn(X,\Mn)$}
\label{section:RMaps}

\begin{remark}\label{rem:psi-X-natural}
  The algebra embedding $\psi_X$ in Proposition~\ref{prop:A} is
  natural in the following sense.  Let $f\colon X\dasharrow Y$ be a
  dominant rational map of irreducible $n$-varieties.  Then the
  following diagram commutes:
  \[ \xymatrix{
    k_n(Y) \ar@{->}[r]^-{\psi_Y} \ar@{->}[d]^{\,f^*} 
                           & \RMaps_\PGLn(Y,\Mn) \ar@{->}[d]^{\,f^\star} \cr 
    k_n(X) \ar@{->}[r]^-{\psi_X} &  \RMaps_\PGLn(X,\Mn)} \]
  Here the induced $k$-algebra embeddings $f^*$ and $f^\star$ were
  studied in Sections~\ref{section:first-extension-S7}
  and~\ref{section:second-extension-S7} (see
  Definitions~\ref{defn:7.5new}(c)
  and~\ref{defn:second-definition}(a), respectively).
\end{remark}

\begin{remark}\label{rem:psi-X-natural-II}
  Again, let $X$ and $Y$ be irreducible $n$-varieties, and let
  $\alpha\colon k_n(Y)\to k_n(X)$ be a $k$-algebra homomorphism.  Then
  there is a unique $k$-algebra homomorphism~$\beta$ so that the
  following diagram commutes:
  \[ \xymatrix{
    k_n(Y) \ar@{->}[r]^-{\psi_Y} \ar@{->}[d]^{\,\alpha} 
                           & \RMaps_\PGLn(Y,\Mn) \ar@{..>}[d]^{\,\beta}  \cr 
    k_n(X) \ar@{->}[r]^-{\psi_X} &  \RMaps_\PGLn(X,\Mn)} \]
  Indeed, let $\beta=f^\star$
  (Definition~\ref{defn:second-definition}(a)), where $f=\alpha_*$
  (Definition~\ref{defn:7.5new}(d), see also
  Lemma~\ref{lem:first-ext-of-def-7.5}).  Then the diagram commutes by
  the previous remark since $\alpha=f^*$ (Remark~7.6).
  Lemma~\ref{lem:8.4} shows that $\beta$ is unique.
\end{remark}

\begin{lem}\label{lem:9.1}
  Let $X$ be an irreducible $\PGLn$-variety and $Y$ an irreducible
  $n$-variety.  Assume that there is a $\PGLn$-equivariant dominant
  rational map $f\colon X\dasharrow Y$.  Then $g\mapsto g\circ f$
  defines a $k$-algebra homomorphism
  \[f^\star\colon \RMaps_\PGLn(Y,\Mn)\lra \RMaps_\PGLn(X,\Mn)\,.\]
  Moreover, $\RMaps_\PGLn(X,\Mn)$ is a central simple algebra of
  degree~$n$ with center $k(X)^\PGLn$.
\end{lem}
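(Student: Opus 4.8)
The plan is to prove the two assertions separately: first that $g\mapsto g\circ f$ is a well-defined $k$-algebra homomorphism $f^\star\colon \RMP{Y}\to\RMP{X}$, and then to deduce the structure statement for $\RMP{X}$ by invoking the earlier results. First I would check well-definedness. Given $g\in\RMP{Y}$, let $V\subseteq Y$ be its domain of definition; $V$ is dense open and $\PGLn$-invariant (if $g$ is regular at $y$ then, by equivariance, it is regular at $\gamma\cdot y$). Since $f$ is dominant and $X$ is irreducible, the preimage $f^{-1}(V)$ is a nonempty, hence dense, open subset of $X$, and it is $\PGLn$-invariant because $f$ is $\PGLn$-equivariant. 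On $f^{-1}(V)$ the composite $g\circ f$ is a well-defined rational map $X\dasharrow\Mn$, and it is $\PGLn$-equivariant: for $\gamma\in\PGLn$ and $x$ in a dense open subset, $(g\circ f)(\gamma\cdot x)=g(\gamma\cdot f(x))=\gamma\cdot(g\circ f)(x)$. Thus $f^\star$ takes values in $\RMP{X}$. That $f^\star$ is additive, multiplicative, $k$-linear, and sends $1$ to $1$ is immediate, since all operations in these $\RMaps$-algebras are computed pointwise on dense open subsets; so $f^\star$ is a nonzero $k$-algebra homomorphism.

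For the ``moreover'' part, I would apply Proposition~\ref{prop:A} to the irreducible $n$-variety $Y$ to conclude that $\RMP{Y}$ is a central simple algebra of degree~$n$. Being simple, it has no nontrivial two-sided ideals, so the nonzero homomorphism $f^\star$ is injective, and its image is a central simple subalgebra of $\RMP{X}$ of degree~$n$. Now $X$ is an irreducible $\PGLn$-variety whose algebra $\RMP{X}$ contains a central simple algebra of degree~$n$, so Lemma~\ref{lem-for-Prop:A} applies and yields that $\RMP{X}$ is itself a central simple algebra of degree~$n$ with center $k(X)^\PGLn$.

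I do not expect a real obstacle here: the only step needing genuine care is the rational-map bookkeeping in the first paragraph --- namely checking that $f^{-1}(V)$ is dense and $\PGLn$-invariant --- which is exactly where the hypotheses that $f$ is dominant and $\PGLn$-equivariant, and that $X$ is irreducible, are used. Once $f^\star$ is in place, the structure statement is a formal consequence of Proposition~\ref{prop:A} (applied to $Y$) and Lemma~\ref{lem-for-Prop:A} (applied to $X$), so the essential content has already been established earlier in the paper.
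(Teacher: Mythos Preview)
Your proposal is correct and follows exactly the paper's approach: the paper simply asserts that $f^\star$ is a $k$-algebra homomorphism (``It is clear''), then applies Proposition~\ref{prop:A} to $Y$ and Lemma~\ref{lem-for-Prop:A} to $X$, precisely as you do. Your first paragraph merely unpacks the ``clear'' step with the standard domain-of-definition bookkeeping, so there is no substantive difference.
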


\begin{proof}
  It is clear that $f^\star$ is a $k$-algebra homomorphism.  Since the
  algebra $\RMaps_\PGLn(Y,\Mn)$ is a central simple algebra of
  degree~$n$ by Proposition~\ref{prop:A}, so is $\RMaps_\PGLn(X,\Mn)$
  by Lemma~\ref{lem-for-Prop:A}.
\end{proof}

\begin{lem}\label{lem:9.2-new}
  In the situation of Lemma~\ref{lem:second-definition}:
  \begin{enumerate}
  \item[\upshape(a)]Let
    $f^\star\colon\RMaps_\PGLn(Y,\Mn)\to\RMaps_\PGLn(X,\Mn)$ be the map
    induced by $f$ {\upshape(}via $g\mapsto g\circ f${\upshape)}.
    Then $\alpha=f^\star\circ\psi_Y$, i.e., the following diagram
    commutes:
  \[\xymatrix{
    k_n(Y) \ar@{->}[r]^-{\ \psi_Y\ } \ar@{->}[rd]_(0.42)\alpha
                           & \RMaps_\PGLn(Y,\Mn) \ar@{->}[d]^(0.5){\,f^\star} \cr 
                           &  \RMaps_\PGLn(X,\Mn)}\]
  \item[\upshape(b)]If $\alpha$ is an isomorphism, so are $\psi_Y$ and
    $f^\star$. 
  \item[\upshape(c)]Assume that also $X$ is an $n$-variety.  If
    $\psi_X$ is an isomorphism {\upshape(}or if
    $\alpha(k_n(Y))\subseteq \psi_X(k_n(X))${\upshape)}, then $f$ is a
    dominant rational map of $n$-varieties.
  \end{enumerate}
\end{lem}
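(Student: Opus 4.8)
The plan is to establish part~(a) first --- the commutativity of the triangle --- and then obtain parts~(b) and~(c) from it by purely formal manipulation. Before doing anything, I would record that by Lemma~\ref{lem:second-definition} the map $f$ is a $\PGLn$-equivariant dominant rational map $X\dasharrow Y$, so that $f^\star\colon\RMaps_\PGLn(Y,\Mn)\to\RMaps_\PGLn(X,\Mn)$, $g\mapsto g\circ f$, is a well-defined $k$-algebra homomorphism; moreover $\RMaps_\PGLn(Y,\Mn)$ is a central simple algebra of degree~$n$ by Proposition~\ref{prop:A}, and hence so is $\RMaps_\PGLn(X,\Mn)$ by Lemma~\ref{lem-for-Prop:A} (or Lemma~\ref{lem:9.1}), so that both $f^\star$ and $\psi_Y$ are injective.

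For part~(a), the strategy is to compare $\alpha$ and $f^\star\circ\psi_Y$ as $k$-algebra homomorphisms $k_n(Y)\to\RMaps_\PGLn(X,\Mn)$. Since $k_n(Y)$ is the total ring of fractions of $k_n[Y]$, which in turn is generated as a $k$-algebra by the images $\overline{X_1},\dots,\overline{X_l}$ of the generic matrices, it is enough to check agreement on each $\overline{X_i}$. Here $\psi_Y(\overline{X_i})$ is, by its very construction, $\overline{X_i}$ viewed as the restriction to $Y$ of the $i$-th coordinate projection $(\Mn)^l\to\Mn$; consequently $f^\star(\psi_Y(\overline{X_i}))=\psi_Y(\overline{X_i})\circ f$ is the $i$-th component of $f$, namely $\alpha(\overline{X_i})$ by the definition of $f$ in Definition~\ref{defn:second-definition}(b). (This is exactly what the computation \eqref{lem:first-ext-of-def-7.5:eqn} records in the case $p=X_i$.) So the two homomorphisms agree on generators, hence coincide, giving $\alpha=f^\star\circ\psi_Y$.

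Parts~(b) and~(c) then follow formally. For~(b): if $\alpha$ is an isomorphism, then $f^\star$, being the outer map in the surjective composite $\alpha=f^\star\circ\psi_Y$, is surjective, hence --- being injective --- an isomorphism, and then $\psi_Y=(f^\star)^{-1}\circ\alpha$ is an isomorphism too. For~(c): $f=(f_1,\dots,f_l)$ with $f_i=\alpha(\overline{X_i})\in\alpha(k_n(Y))$, and $f$ is already dominant, so by Definition~\ref{defn:7.5new}(a) it is a dominant rational map of $n$-varieties exactly when each $f_i$ lies in $\psi_X(k_n(X))\subseteq\RMaps_\PGLn(X,\Mn)$; this holds under either hypothesis, since it is automatic when $\psi_X$ is onto and is precisely the assumption $\alpha(k_n(Y))\subseteq\psi_X(k_n(X))$ otherwise. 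I do not expect a genuine obstacle: the one step that needs care is the identification $\psi_Y(\overline{X_i})\circ f=\alpha(\overline{X_i})$ in~(a), and even that is immediate once one unwinds the definition of $\psi_Y$ on the generic matrices and invokes \eqref{lem:first-ext-of-def-7.5:eqn}; everything substantive (well-definedness of $f$ as a dominant rational map, and central-simplicity of the $\RMaps$ algebras) has already been supplied by the cited results.
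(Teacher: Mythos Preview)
Your proposal is correct and follows essentially the same approach as the paper: for (a) you reduce to checking agreement on the generators $\overline{X_i}$ of $k_n[Y]$ via the computation in \eqref{lem:first-ext-of-def-7.5:eqn}, and (b) and (c) are then deduced formally from the injectivity of $\psi_Y$ and $f^\star$ and from Definition~\ref{defn:7.5new}(a), just as in the paper's argument.
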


\begin{proof}
  (a) Since $k_n(Y)$ is the total ring of fractions of $\knY$, it
  suffices to prove that for $p\in G_{l,n}$, $(f^\star\circ\psi_Y)(\bar
  p)=\alpha(\bar p)$.  This is true since for $x\in X$ in general
  position,
  \[[f^\star(\psi_Y(\bar p))](x)=[\psi_Y(\bar p)](f(x))
  =p(f(x))=\alpha(\bar p)(x)\,,\]
  where the last equality follows from
  \eqref{lem:first-ext-of-def-7.5:eqn}.  (b)~follows from (a) since
  $\psi_Y$ and $f^\star$ are embeddings.  (c)~is clear from the
  definition of~$f$ in Lemma~\ref{lem:second-definition} (cf.\
  Definitions~\ref{defn:7.5new}(d)
  and~\ref{defn:second-definition}(b)).
\end{proof}

\begin{prop}\label{prop:9.3}
  Let $X$ be an irreducible $\PGLn$-variety such that the algebra
  $\RMaps_\PGLn(X,\Mn)$ is a central simple algebra of degree~$n$.
  Then there is an irreducible $n$-variety $Y$ with an isomorphism
  \[\alpha\colon k_n(Y)\to\RMaps_\PGLn(X,\Mn)\,.\]
  For any such $Y$ and $\alpha$, the $\PGLn$-equivariant dominant
  rational map $f\colon$\linebreak[0]$X\dasharrow Y$ from
  Lemma~\ref{lem:second-definition} is generically injective, and both
  $\psi_Y$ and $f^\star$ are isomorphisms.  In particular,
  \[k_n(X)\xrightarrow[\mbox{\rm\tiny 1-1}]{\psi_X}\RMaps_\PGLn(X,\Mn)
  \xrightarrow[\cong]{\alpha\inv} k_n(Y)
  \xrightarrow[\cong]{\psi_Y}\RMaps_\PGLn(Y,\Mn)\,.\] 
\end{prop}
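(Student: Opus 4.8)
The plan is to combine Theorem~7.8 with Proposition~\ref{prop:A} and the lemmas just proved. First I would produce the $n$-variety $Y$: by hypothesis $R:=\RMaps_\PGLn(X,\Mn)$ is a central simple algebra of degree~$n$ whose center is a finitely generated field extension of $k$ (the center is $k(X)^\PGLn$ by Lemma~\ref{lem-for-Prop:A}, and $k(X)$ is finitely generated since $X$ is a variety, so its subfield $k(X)^\PGLn$ is finitely generated as well). Hence $R$ is an object of $\CS_n$, and Theorem~7.8 gives an irreducible $n$-variety $Y$ together with an isomorphism $\alpha\colon k_n(Y)\xrightarrow{\ \cong\ }R$. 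This disposes of the first assertion and fixes, for the rest of the proof, an arbitrary pair $(Y,\alpha)$.

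Next I would unpack what Lemma~\ref{lem:second-definition} gives us. Applying that lemma to the embedding $\alpha\colon k_n(Y)\hookrightarrow \RMaps_\PGLn(X,\Mn)$ (it is an embedding since it is an isomorphism) yields a $\PGLn$-equivariant dominant rational map $f=\alpha_\star\colon X\dasharrow Y$. Then Lemma~\ref{lem:9.2-new}(a) says the triangle built from $\psi_Y$, $\alpha$, and $f^\star$ commutes, i.e.\ $\alpha=f^\star\circ\psi_Y$. Since $\alpha$ is an isomorphism and both $\psi_Y$ and $f^\star$ are injective $k$-algebra homomorphisms (Proposition~\ref{prop:A} and Lemma~\ref{lem:9.1}, respectively), the factorization $\alpha=f^\star\circ\psi_Y$ forces $\psi_Y$ to be surjective and $f^\star$ to be surjective; combined with injectivity, both are isomorphisms. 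This is exactly Lemma~\ref{lem:9.2-new}(b), so I would just cite it. The displayed chain of maps in the statement is then immediate: $\psi_X$ is the embedding of Proposition~\ref{prop:A}, $\alpha\inv$ is the inverse of the isomorphism just produced, and $\psi_Y$ is an isomorphism by what we just argued.

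It remains to see that $f\colon X\dasharrow Y$ is generically injective. Here I would use the characterization of points on $Y$ in terms of surjective representations. Recall from the proof of Lemma~\ref{lem:first-ext-of-def-7.5} that for $x$ in the dense open $U\subseteq X$ where $f$ is defined and lands in $Y$, one has $f(x)\in Y$ and, writing $\psi_x\colon k_n(Y)\to\M_n$ for the surjection $\bar p\mapsto \alpha(\bar p)(x)$, the point $f(x)$ corresponds to the representation $\psi_x$ (its $i$-th coordinate is $\alpha(\overline{X_i})(x)$). Two points $x,x'$ of $U$ have $f(x)=f(x')$ iff $\psi_x$ and $\psi_{x'}$ agree on the generic matrices, hence on all of $k_n(Y)$ since the $\overline{X_i}$ generate $k_n(Y)$ as a $k$-algebra up to inverting central elements — more carefully, they agree on $k_n[Y]$, and since the common value is a surjection onto $\M_n$, both extend uniquely to $k_n(Y)$ and thus agree there. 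So $f(x)=f(x')$ forces $\alpha(g)(x)=\alpha(g)(x')$ for every $g\in k_n(Y)$, i.e.\ $h(x)=h(x')$ for every $h\in\RMaps_\PGLn(X,\Mn)$ (using that $\alpha$ is \emph{onto} $R$ — this is the one place surjectivity of $\alpha$, rather than just its existence, is essential). Since $k(X)^\PGLn$ is the center of $R$, in particular every $\PGLn$-invariant rational function on $X$ takes the same value at $x$ and $x'$; by Rosenlicht's theorem (as invoked in the proof of Proposition~\ref{prop:A}) the invariant rational functions separate $\PGLn$-orbits on a dense open subset, so after shrinking $U$ we get that $x$ and $x'$ lie in the same $\PGLn$-orbit. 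The main obstacle is precisely this last step: one must be careful that ``generically injective'' for a $\PGLn$-equivariant map means injective on orbits over a dense open set, not literally injective, and one has to line up the Rosenlicht open set from Proposition~\ref{prop:A} with the domain-of-definition open set of $f$. Everything else is bookkeeping with the commuting triangle.
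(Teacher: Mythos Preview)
Your argument for the existence of $Y$ and for $\psi_Y$, $f^\star$ being isomorphisms is correct and matches the paper's proof exactly (invoke Theorem~7.8, then Lemma~\ref{lem:9.2-new}(b)).

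The generic injectivity argument, however, has a genuine gap at the very end. Your reduction to ``$x$ and $x'$ lie in the same $\PGLn$-orbit'' is fine --- in fact your route (use surjectivity of $\alpha$ to conclude that every $h\in\RMaps_\PGLn(X,\Mn)$, hence every element of $k(X)^\PGLn$, agrees at $x$ and $x'$, then apply Rosenlicht) is a slightly more direct variant of what the paper does via the induced birational isomorphism $\bar f$ on rational quotients. But you then stop there and \emph{redefine} ``generically injective'' to mean ``injective on orbits''. That is not what the term means: generically injective means literally injective on a dense open subset, and that is what the paper proves and what Corollary~\ref{cor:9.3} uses.

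The missing step is short but essential. Once you know $x'=hx$ for some $h\in\PGLn$, equivariance of $f$ gives $f(x')=h\cdot f(x)$; combined with $f(x)=f(x')$ this says $h$ stabilizes the point $f(x)\in Y\subseteq U_{l,n}$. By~\eqref{eq:n-var-triv-stab} every point of $U_{l,n}$ has trivial stabilizer in $\PGLn$, so $h=1$ and $x=x'$. This is exactly how the paper closes the argument, and without it your proof does not establish the stated conclusion.
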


\begin{proof}
  By Theorem~7.8, there is an irreducible $n$-variety $Y$ with a
  $k$-algebra isomorphism $\alpha\colon k_n(Y)\to
  \RMaps_\PGLn(X,\Mn)$.  Let $f\colon X\dasharrow Y$ be the
  $\PGLn$-equivariant dominant rational map from
  Lemma~\ref{lem:second-definition}.  Then $\psi_Y$ and $f^\star$ are
  isomorphisms by Lemma~\ref{lem:9.2-new}(b).  It remains to be shown
  that $f$ is generically injective (which we will use only once in
  the sequel, namely in Corollary~\ref{cor:9.3}).

  Since $f^\star\colon\RMaps_\PGLn(Y,\Mn)\to\RMaps_\PGLn(X,\Mn)$ is an
  isomorphism, it maps the center isomorphically onto the center,
  i.e., $f^\star$ maps $k(Y)^\PGLn$ isomorphically onto $k(X)^\PGLn$.
  Denote by $\pi_X\colon X\dasharrow \overline{X}$ and $\pi_Y\colon
  Y\dasharrow\overline{Y}$ the rational quotients of $X$ and $Y$ with
  respect to the $\PGLn$-actions (see \cite[Section 6.3]{dolgachev}).
  Then we obtain the following commutative diagram:
  \[ \xymatrix{
     X \ar@{-->}[r]^{f} \ar@{-->}[d]_-{\pi_X}  & Y \ar@{-->}[d]^-{\pi_Y} \cr 
    \overline{X} \ar@{-->}[r]^{\bar f}       & \overline{Y} } \]
  Here $\bar f$ is induced by the isomorphism from
  $k(\overline{Y})=k(Y)^\PGLn$ onto $k(\overline{X})=k(X)^\PGLn$, so
  is a birational isomorphism.  Now let $x_1,x_2\in X$ be in general
  position.  If $f(x_1)=f(x_2)$, then $\pi_X(x_1)=\pi_X(x_2)$ since
  $\bar f$ is a birational isomorphism.  Hence $x_1$ and $x_2$ belong
  to the same $\PGLn$-orbit.  So $x_2=hx_1$ for some $h\in\PGLn$.
  Then $f(x_1)=f(x_2)=hf(x_1)$.  This can be true only if $h=1$, i.e.,
  if $x_1=x_2$, since every point in the $n$-variety $Y$ has trivial
  stabilizer in $\PGLn$, see \eqref{eq:n-var-triv-stab}.
\end{proof}

\begin{cor}\label{cor:9.3}
  Let $X$ be an irreducible $\PGLn$-variety.  The following are
  equivalent: 
  \begin{enumerate}
  \item[\upshape(a)]$\RMaps_\PGLn(X,\Mn)$ is a central simple algebra
    of degree~$n$.
  \item[\upshape(b)]There is a $\PGLn$-equivariant dominant rational
    map $f\colon X\dasharrow Y$ for some irreducible $n$-variety $Y$.
  \end{enumerate}
  If so, the $\PGLn$-action on $X$ has trivial stabilizers at points
  in general position, and $Y$ and $f$ can be chosen such that $f$ is
  in addition generically injective.
\end{cor}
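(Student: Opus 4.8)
\textbf{Proof proposal for Corollary~\ref{cor:9.3}.}
The plan is to deduce this corollary directly from the results already assembled in this section, chiefly Lemma~\ref{lem:9.1}, Proposition~\ref{prop:9.3}, and Theorem~7.8. The implication (b)~$\Rightarrow$~(a) is immediate: given a $\PGLn$-equivariant dominant rational map $f\colon X\dasharrow Y$ with $Y$ an irreducible $n$-variety, Lemma~\ref{lem:9.1} says precisely that $f^\star$ is a $k$-algebra homomorphism $\RMaps_\PGLn(Y,\Mn)\to\RMaps_\PGLn(X,\Mn)$ and that $\RMaps_\PGLn(X,\Mn)$ is then a central simple algebra of degree~$n$ (its center being $k(X)^\PGLn$). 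So the bulk of the work is (a)~$\Rightarrow$~(b) together with the two addenda.

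For (a)~$\Rightarrow$~(b), assume $\RMaps_\PGLn(X,\Mn)$ is a central simple algebra of degree~$n$. By Theorem~7.8, every object of $\CS_n$ is of the form $k_n(Y)$ for some irreducible $n$-variety $Y$; applying this to $\RMaps_\PGLn(X,\Mn)$ (whose center is a finitely generated field extension of $k$, since $X$ is a $\PGLn$-variety and hence $k(X)$, and a fortiori $k(X)^\PGLn$, is finitely generated over $k$), we obtain an irreducible $n$-variety $Y$ and a $k$-algebra isomorphism $\alpha\colon k_n(Y)\to\RMaps_\PGLn(X,\Mn)$. Then $f=\alpha_\star\colon X\dasharrow Y$, defined as in Lemma~\ref{lem:second-definition} (which applies because $\alpha$ is a $k$-algebra embedding of $k_n(Y)$ into $\RMaps_\PGLn(X,\Mn)$), is a $\PGLn$-equivariant dominant rational map, giving~(b). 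This is exactly the content of the first paragraph of Proposition~\ref{prop:9.3}.

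For the two concluding claims: Proposition~\ref{prop:9.3} tells us that for this $Y$ and $\alpha$, the map $f$ is generically injective and $\psi_Y, f^\star$ are isomorphisms — so the ``$f$ can be chosen generically injective'' clause is automatic from our construction. It remains to observe that the $\PGLn$-action on $X$ has trivial stabilizers at points in general position. I would argue this as follows: since $f\colon X\dasharrow Y$ is $\PGLn$-equivariant, dominant, and (after the above choice) generically injective, for $x$ in general position and $h\in\PGLn$ with $hx$ also in the domain of $f$, we have $f(hx)=hf(x)$; if $h\in\Stab_\PGLn(x)$ then $f(x)=f(hx)=hf(x)$, so $h$ fixes the point $f(x)\in Y$, whence $h=1$ by \eqref{eq:n-var-triv-stab} (every point of an $n$-variety has trivial $\PGLn$-stabilizer). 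Thus $\Stab_\PGLn(x)=1$ for $x$ in general position.

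The only point requiring a little care — and the one I would flag as the main obstacle — is the invocation of Theorem~7.8 in the proof of (a)~$\Rightarrow$~(b): one must confirm that $\RMaps_\PGLn(X,\Mn)$, when it is a central simple algebra of degree~$n$, genuinely lies in the category $\CS_n$, i.e., that its center is a finitely generated field extension of~$k$. By Lemma~\ref{lem-for-Prop:A} the center is $k(X)^\PGLn$, which sits inside the finitely generated field $k(X)$; since $k$ is a field, any subextension of a finitely generated field extension is again finitely generated, so this is fine. Everything else is a direct citation of Lemma~\ref{lem:9.1} and Proposition~\ref{prop:9.3}, so the proof is short.
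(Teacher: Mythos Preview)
Your proof is correct and follows essentially the same approach as the paper's: (b)~$\Rightarrow$~(a) via Lemma~\ref{lem:9.1}, (a)~$\Rightarrow$~(b) and the generic injectivity via Proposition~\ref{prop:9.3}, and the trivial-stabilizer claim via equivariance of $f$ together with~\eqref{eq:n-var-triv-stab}. Two minor remarks: your trivial-stabilizer argument does not actually use generic injectivity of $f$ (equivariance alone suffices, as in the paper), and your careful check that $k(X)^\PGLn$ is finitely generated over $k$---hence that $\RMaps_\PGLn(X,\Mn)$ lies in $\CS_n$---is a point the paper leaves implicit inside Proposition~\ref{prop:9.3}.
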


\begin{proof}
  This follows from Proposition~\ref{prop:9.3} and
  Lemma~\ref{lem:9.1}.  Recall from \eqref{eq:n-var-triv-stab} that
  every point in $Y$ has trivial stabilizer in $\PGLn$.  Since $f$ is
  $\PGLn$-equivariant, every $x\in X$ in general position (wherever
  $f$ is defined) must have trivial stabilizer in $\PGLn$ as well.
\end{proof}

\begin{prop}\label{prop:9.5}
  Let $X$ be an irreducible $n$-variety.  Then there is an irreducible
  $n$-variety $Y$ such that $\RMaps_\PGLn(X,\Mn)\cong k_n(Y)$.  For
  any such $Y$, there is a $\PGLn$-equivariant dominant rational map
  $f\colon X\dasharrow Y$ and a dominant rational map of $n$-varieties
  $g\colon Y\dasharrow X$ which are inverse to each other as
  rational maps.  Consequently, $X$ and $Y$ are birationally
  isomorphic as $\PGLn$-varieties {\upshape(}but maybe not as
  $n$-varieties{\upshape)}.  In addition, if also $\psi_X$ is an
  isomorphism, then $X$ and $Y$ are birationally isomorphic as
  $n$-varieties. 
\end{prop}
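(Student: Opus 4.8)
The plan is to build $Y$ from Theorem~7.8 and then extract the two mutually inverse maps from the machinery of Sections~\ref{section:first-extension-S7} and~\ref{section:second-extension-S7}. First I would invoke Proposition~\ref{prop:A}: since $X$ is an irreducible $n$-variety, $\RMaps_\PGLn(X,\Mn)$ is a central simple algebra in $\CS_n$. By Theorem~7.8, there is an irreducible $n$-variety $Y$ together with a $k$-algebra isomorphism $\alpha\colon k_n(Y)\to\RMaps_\PGLn(X,\Mn)$; moreover any such $Y$ is unique up to birational isomorphism of $n$-varieties, so the statement may be proved for one choice of $(Y,\alpha)$ and then transported. This is exactly the hypothesis of Proposition~\ref{prop:9.3} (with the same $X$), so I can quote it directly: the $\PGLn$-equivariant dominant rational map $f\colon X\dasharrow Y$ supplied by Lemma~\ref{lem:second-definition} is generically injective, and both $\psi_Y$ and $f^\star$ are isomorphisms.

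Next I would produce the rational map $g\colon Y\dasharrow X$ of $n$-varieties going the other way. The natural candidate is $g=(\psi_X\inv\circ f^\star\circ\psi_Y)_*$ in the sense of Definition~\ref{defn:7.5new}(d): the composite $\psi_X\inv\circ f^\star\circ\psi_Y$ makes sense only if its image lands in $\psi_X(k_n(X))$, which need not hold in general. So instead I take $\beta=\alpha\inv\circ\psi_X\colon k_n(X)\to k_n(Y)$; this is a $k$-algebra homomorphism of central simple algebras, hence by Lemma~\ref{lem:first-ext-of-def-7.5} it induces a dominant rational map of $n$-varieties $g=\beta_*\colon Y\dasharrow X$. (Alternatively one can phrase $g$ as $(f^\star)\inv$ followed by $\psi_X$, observing via Lemma~\ref{lem:9.2-new}(a) that $\alpha=f^\star\circ\psi_Y$, so $\beta=\psi_Y\circ\alpha\inv\circ\psi_X$ and hence $g=\beta_*$ is exactly the map whose induced algebra embedding on $k_n(X)$ is $\beta$.) Since $g$ is in particular a $\PGLn$-equivariant rational map, $f$ and $g$ are $\PGLn$-equivariant dominant rational maps between $X$ and $Y$.

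The crux is then showing $g\circ f=\id_X$ and $f\circ g=\id_Y$ as rational maps of varieties; this will identify $X$ and $Y$ as birationally isomorphic $\PGLn$-varieties. I would do this on the algebra side. Using Remark~7.6 and Lemma~\ref{lem:9.2-new}(a), $f^\star\circ\psi_Y=\alpha$ and $\beta=\psi_Y\circ\alpha\inv\circ\psi_X$, so by Remark~\ref{rem:psi-X-natural-II} the map $g^\star=(\beta_*)^\star$ satisfies $g^\star\circ\psi_Y=\psi_X\circ\beta=\psi_X\circ\psi_Y\inv\circ\alpha\inv\circ\psi_X$; combined with $f^\star=\alpha\circ\psi_Y\inv$ one gets $g^\star\circ f^\star=\id$ on $\psi_X(k_n(X))$, and then Lemma~\ref{lem:8.4} forces $g^\star\circ f^\star=\id$ on all of $\RMaps_\PGLn(X,\Mn)$, i.e.\ $(f\circ g)^\star=\id$ by Corollary~\ref{cor:II.8.9}(a); similarly $(g\circ f)^\star=\id$ on $\RMaps_\PGLn(Y,\Mn)$. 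Applying the faithfulness contained in Corollary~\ref{cor:II.8.8} (or directly $(f^\star)_\star=f$) yields $f\circ g=\id_Y$ and $g\circ f=\id_X$. Finally, for the last sentence: if $\psi_X$ is also an isomorphism, then $f^\star$ being an isomorphism together with Lemma~\ref{lem:9.2-new}(c) shows $f$ is itself a rational map of $n$-varieties, so $f$ and $g$ are mutually inverse dominant rational maps of $n$-varieties and $X\cong Y$ in $\calC_n$. The step I expect to be the main obstacle is verifying that the composites are the identity on the nose: the two classes of induced maps ($*$ versus $\star$) must be carefully tracked through Remarks~\ref{rem:psi-X-natural} and~\ref{rem:psi-X-natural-II} and Lemma~\ref{lem:8.4}, since $\psi_X$ may fail to be surjective and $g$ is only a priori a rational map of $n$-varieties, not visibly a $\star$-type map.
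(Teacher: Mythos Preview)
Your plan matches the paper's: build $f$ from $\alpha$ via Lemma~\ref{lem:second-definition} (the paper cites Lemma~\ref{lem:9.2-new} directly rather than Proposition~\ref{prop:9.3} for the fact that $\psi_Y$ and $f^\star$ are isomorphisms), set $\beta=\psi_Y\circ\alpha^{-1}\circ\psi_X$, obtain $g$ from $\beta$, verify $(f^\star\circ g^\star)\circ\psi_X=\psi_X$, and then invoke Lemma~\ref{lem:8.4} together with Corollary~\ref{cor:II.8.9}. Watch the directions in your verification step: for $g\colon Y\dasharrow X$ the naturality square reads $g^\star\circ\psi_X=\psi_Y\circ g^*$ (not $g^\star\circ\psi_Y=\psi_X\circ\beta$), and it is $f^\star\circ g^\star$, not $g^\star\circ f^\star$, that is the endomorphism of $\RMaps_\PGLn(X,\Mn)$ on which you want to apply Lemma~\ref{lem:8.4}.
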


\begin{proof}
  By Proposition~\ref{prop:A}, $\RMaps_\PGLn(X,\Mn)$ is a central
  simple algebra of degree~$n$.  By Theorem~7.8, there is an
  irreducible $n$-variety $Y$ with a $k$-algebra isomorphism
  $\alpha\colon k_n(Y)\to \RMaps_\PGLn(X,\Mn)$.  Let $f\colon
  X\dasharrow Y$ be the $\PGLn$-equivariant dominant rational map from
  Lemma~\ref{lem:second-definition}.  Then Lemma \ref{lem:9.2-new}
  implies that $\alpha=f^\star\circ\psi_Y$, and that $\psi_Y$ and
  $f^\star$ are isomorphisms (since $\alpha$ is).  Moreover, if also
  $\psi_X$ is an isomorphism, then $f$ is a dominant rational map of
  $n$-varieties.

  Now consider the $k$-algebra embedding
  \[
  \beta=\psi_Y\circ\alpha\inv\circ\psi_X\colon
  k_n(X)\to\RMaps_\PGLn(Y,\Mn)\,.
  \]
  Applying similar reasoning to $\beta$ (and with the roles of $X$ and
  $Y$ reversed), we obtain a $\PGLn$-equivariant dominant rational map
  $g\colon Y\dasharrow X$ such that $\beta=g^\star\circ\psi_X$ (see
  Figure~\ref{figure1}).  Moreover, $g$ is a dominant rational map of
  $n$-varieties since $\psi_Y$ is an isomorphism.  Now
  \begin{figure}
  \[
    \xymatrix{
    k_n(Y) \ar@{->}[r]^-{\ \psi_Y\ } \ar@{->}[rd]^(0.4)\alpha
                    & \RMaps_\PGLn(Y,\Mn) \ar@{->}@<-1ex>[d]_(.5){f^\star} \cr 
    k_n(X) \ar@{->}[r]_-{\ \psi_X\ } \ar@{->}[ru]_(0.4)\beta
                    &  \RMaps_\PGLn(X,\Mn)  \ar@{->}@<-1ex>[u]_(.5){\,g^\star} }
  \]
  \vspace{-.3cm}
  \caption{\label{figure1}\em The maps in the proof of
    Proposition~\ref{prop:9.5}.}

  \bigskip
  \end{figure}
  \[(f^\star\circ g^\star)\circ\psi_X=f^\star\circ\beta
  =(f^\star\circ\psi_Y)\circ\alpha\inv\circ\psi_X=\id\circ\,\psi_X,\]
  where $\id$ is the identity map on $\RMaps_\PGLn(X,\Mn)$.  It
  follows from Lemma \ref{lem:8.4} that $f^\star\circ g^\star=\id$.
  Since $f^\star$ is an isomorphism, $f^\star$ and $g^\star$ are
  inverse isomorphisms.  It follows now from
  Corollary~\ref{cor:II.8.9} that $f$ and $g$ are inverse to each
  other as rational maps.
\end{proof}

\begin{cor}\label{cor:9.7}
  Let $X$ be an irreducible $n$-variety.  Then there is an irreducible
  $n$-variety $Y$, unique up to birational isomorphism of
  $n$-varieties, such that $\RMaps_\PGLn(X,\Mn)\cong k_n(Y)$.
  Moreover, $X$ and $Y$ are birationally isomorphic as
  $\PGLn$-varieties {\upshape(}but maybe not as
  $n$-varieties{\upshape)}.
\end{cor}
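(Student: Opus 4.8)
The plan is to read the corollary off from Proposition~\ref{prop:9.5} together with the uniqueness half of Theorem~7.8, so that only a short assembly argument is needed. The one preliminary point I would make explicit is that $\RMaps_\PGLn(X,\Mn)$ genuinely is an object of $\CS_n$: by Proposition~\ref{prop:A} it is a central simple algebra of degree~$n$ whose center $k(X)^\PGLn$ is a finite (purely inseparable) extension of the center $F$ of $k_n(X)$; and $F$ is a finitely generated field extension of $k$ since $k_n[X]$ is a finitely generated $k$-algebra. Hence $k(X)^\PGLn$ is finitely generated over $k$, so that $\RMaps_\PGLn(X,\Mn)\in\CS_n$ and Theorem~7.8 is applicable to it.

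Existence of $Y$ and the ``moreover'' clause are then immediate. Proposition~\ref{prop:9.5} provides an irreducible $n$-variety $Y$ with $\RMaps_\PGLn(X,\Mn)\cong k_n(Y)$ and, for any such $Y$, a $\PGLn$-equivariant dominant rational map $f\colon X\dasharrow Y$ together with a dominant rational map of $n$-varieties $g\colon Y\dasharrow X$ inverse to $f$; in particular $X$ and $Y$ are birationally isomorphic as $\PGLn$-varieties (though, as Proposition~\ref{prop:9.5} already notes, possibly not as $n$-varieties unless $\psi_X$ happens to be an isomorphism).

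It remains only to establish uniqueness. Suppose $Y'$ is another irreducible $n$-variety with $\RMaps_\PGLn(X,\Mn)\cong k_n(Y')$. Composing isomorphisms yields a $k$-algebra isomorphism $k_n(Y)\cong k_n(Y')$ of central simple algebras in $\CS_n$. By the uniqueness assertion in Theorem~7.8 --- every algebra in $\CS_n$ is isomorphic to $k_n(Z)$ for an irreducible $n$-variety $Z$ that is unique up to birational isomorphism of $n$-varieties --- it follows that $Y$ and $Y'$ are birationally isomorphic as $n$-varieties, which completes the argument.

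I do not anticipate a genuine obstacle: the real content was carried out in Proposition~\ref{prop:9.5} (via Lemma~\ref{lem:9.2-new}, Lemma~\ref{lem:8.4}, and Corollary~\ref{cor:II.8.9}) and in Theorem~7.8. The only step one must not skip is the verification above that $\RMaps_\PGLn(X,\Mn)$ belongs to $\CS_n$, since the uniqueness input from Theorem~7.8 is stated only for algebras in that category.
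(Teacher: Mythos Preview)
Your argument is correct. For existence and the ``moreover'' clause you do exactly what the paper does, namely invoke Proposition~\ref{prop:9.5}. For uniqueness you take a slightly different, more direct route: from $k_n(Y)\cong\RMaps_\PGLn(X,\Mn)\cong k_n(Y')$ you conclude $k_n(Y)\cong k_n(Y')$ and then cite the uniqueness clause of Theorem~7.8. The paper instead argues that for any such $Y'$, Proposition~\ref{prop:9.3} forces $\psi_{Y'}$ to be an isomorphism, and then reapplies Proposition~\ref{prop:9.5} with $Y'$ in the role of $X$ to get the birational isomorphism of $n$-varieties between $Y'$ and $Y$. Both routes ultimately rest on the same ingredients (Remark~7.6, Lemma~7.7, Theorem~7.8); yours is shorter, while the paper's detour through Proposition~\ref{prop:9.3} has the side benefit of recording that $\psi_{Y'}$ is an isomorphism for every such $Y'$. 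Your preliminary check that $\RMaps_\PGLn(X,\Mn)\in\CS_n$ is fine but not strictly needed for your uniqueness step, since $k_n(Y)$ already lies in $\CS_n$; that check is really what underlies the existence step inside Proposition~\ref{prop:9.5}.
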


\begin{proof}
  All but the uniqueness of $Y$ follows directly from
  Proposition~\ref{prop:9.5}.  Let $X'$ be another irreducible
  $n$-variety such that $k_n(X')$ is isomorphic to
  $\RMaps_\PGLn(X,\Mn)$.  Then by Proposition~\ref{prop:9.3},
  $\psi_{X'}$ is an isomorphism. Applying Proposition~\ref{prop:9.5}
  with $X'$ instead of $X$, we conclude that $X'$ and $Y$ are
  birationally isomorphic as $n$-varieties.
\end{proof}

\begin{remark}
  Suppose that $f\colon X\dasharrow Y$ is a birational isomorphism of
  irreducible $n$-varieties.  Then the two vertical maps $f^*$ and
  $f^\star$ in Remark~\ref{rem:psi-X-natural} must be isomorphisms
  (since the assignments $f\mapsto f^*$ and $f\mapsto f^\star$ are
  functorial), so that $\psi_X$ must be an isomorphism if $\psi_Y$ is.
  Proposition~\ref{prop:9.5} implies therefore the following: If there
  is an irreducible $n$-variety $X$ such that the embedding
  $\psi_X\colon k_n(X)\to \RMaps_\PGLn(X,\Mn)$ is {\em not} an
  isomorphism, then there are irreducible $n$-varieties which are
  birationally isomorphic as $\PGLn$-varieties but {\em not} as
  $n$-varieties.
\end{remark}


\begin{thebibliography}{33}

\bibitem[\mbox{1] --- [30}]{scratch2} refer to the references of
  \cite{nvar}, the first part of the present paper.  Of these, we only
  list those also cited in this part.  Afterwards, we list additional
  references.

\bibitem[3]{amitsur:small}S. A. Amitsur and L. W. Small,
  \emph{Prime ideals in PI-rings},
J. Algebra {\bf 62} (1980), 358--383.
\myendbibitem

\bibitem[4]{artin1}M. Artin, \emph{On Azumaya algebras
and finite dimensional representations of rings},
J. Algebra {\bf 11} (1969), 532--563.
\myendbibitem

\bibitem[12]{friedland}S. Friedland, 
\emph{Simultaneous similarity of matrices},
Adv. in Math. {\bf 50}  (1983), no. 3, 189--265.
\myendbibitem

\bibitem[16]{mf}D. Mumford, J. Fogarty, {\em Geometric Invariant Theory},
second edition, Springer-Verlag, 1982.
\myendbibitem

\bibitem[21]{procesi1}C. Procesi, \emph{The
invariant theory of $n\times n$ matrices}, Advances in Math. {\bf 19}
(1976), no. 3, 306--381.
\myendbibitem

\bibitem[28]{schelter:78}W. Schelter, \emph{Non-commtutative affine
    P.\ I.\ rings are catenary}, J. Algebra {\bf51} (1978), 12--18.
\myendbibitem

\setcounter{enumiv}{30}

\vspace{-.1cm}

\section*{\normalsize Additional References\hspace{1.6em}\ }

\bibitem{amitsur:multilin}S. A. Amitsur, \emph{On the characteristic
    polynomial of a sum of matrices}, Linear and Multilinear Algebra
  {\bf 8} (1980), 177--182.  
\myendbibitem

\bibitem{berhuy:favi}G. Berhuy and G. Favi, {\it Essential dimension:
    a functorial point of view (after A. Merkurjev)}, Documenta
  Math. {\bf 8} (2003), 279--330.
\myendbibitem

\bibitem{braun-vn}A. Braun and N. Vonessen, {\it Integrality for
    PI-rings}, J. Algebra {\bf 151} (1992), 39--79.
\myendbibitem

\bibitem{dolgachev}I. Dolgachev, {\it Lectures on Invariant Theory},
  London Mathematical Society Lecture Note Series {\bf 296},
  Cambridge University Press, 2003.
\myendbibitem

\bibitem{domokos}M. Domokos, {\it Matrix invariants and the failure of
    Weyl's theorem}, in: Polynomial identities and combinatorial methods
    (Pantelleria, 2001), in: Lecture Notes in Pure and Appl. Math.,
    235, Dekker, New York, 2003, 215--236.
\myendbibitem

\bibitem{domokos-kuzmin-zubkov}Domokos, M., S. G. Kuzmin and
    A. N. Zubkov, {\it Rings of matrix invariants in positive
      characteristic}, J. Pure Appl. Algebra {\bf 176} (2002), 61--80.
\myendbibitem

\bibitem{domokos-zubkov}Domokos, M. and A. N. Zubkov, {\it
    Semi-invariants of quivers as determinants}, Transform. Groups
  {\bf 6} (2001), 9--24.  
\myendbibitem

\bibitem{donkin}S. Donkin, {\it Invariants of several matrices},
  Invent. Math. {\bf 110} (1992), 389--401.
\myendbibitem

\bibitem{FSR}W. Ferrer Santos and A. Rittatore, {\it Actions and
    Invariants of Algebraic Groups}, Chapman \& Hall/CRC, 2005.
\myendbibitem

\bibitem{nvar}Z. Reichstein and N. Vonessen, {\it Polynomial identity
    rings as rings of functions}, J. Algebra {\bf 310} (2007),
  624--647. 
\myendbibitem

\bibitem{gacsa}Z. Reichstein and N. Vonessen, {\it Group actions on
    central simple algebras: A geometric approach},
    J. Algebra {\bf 304} (2006), 1160--1192.
\myendbibitem

\bibitem{springer:LAG}T. A. Springer, {\it Linear Algebraic Groups},
  second edition, Birk\"auser, Boston, 1998.
\myendbibitem

\bibitem{vaccarino}F. Vaccarino, MathSciNet review of \cite{nvar},
  MR2308173 (2008g:16035), American Mathematical Society, 2008.
\myendbibitem

\bibitem{zubkov:MR95c}Zubkov, A. N., {\it Matrix invariants over an
    infinite field of finite characteristic}, Sibirsk. Mat. Zh. {\bf
    34} (1993), 68--74, ii, viii; translation in: Siberian
  Math. J. {\bf 34} (1993), 1059--1065. 
\myendbibitem

\bibitem{zubkov:MR96c}Zubkov, A. N., {\it Endomorphisms of tensor
    products of exterior powers and Procesi hypothesis}, Comm. Algebra
  {\bf 22} (1994), 6385--6399.
\myendbibitem

\bibitem{zubkov:MR2002h}Zubkov, A. N., {\it Modules with good
    filtration and invariant theory}, in: Algebra-representation
  theory (Constanta, 2000), in: NATO Sci. Ser. II Math. Phys. Chem.,
  28, Kluwer Acad. Publ., Dordrecht, 2001, 439--460.
\myendbibitem

\end{thebibliography}
\end{document}